\documentclass[UTF-8,reqno]{amsart}
\usepackage{enumerate, bbm}
\setlength{\topmargin}{-0.3cm}
\setlength{\oddsidemargin}{0.5cm}\setlength{\evensidemargin}{0.5cm}
\setlength{\textwidth}{15truecm}
\setlength{\textheight}{22truecm}
\usepackage{amssymb,url,color, booktabs}
\usepackage{tikz}
\usepackage{tikz-cd}
\makeatletter \protected@xdef\x{spam\c{e}eggs} \makeatother
\usepackage{eurosym}
\usepackage{mathrsfs}
\usepackage{dutchcal}
\usepackage{appendix}
\usetikzlibrary{arrows, shapes.geometric}
\usetikzlibrary{decorations.pathreplacing}
\usepackage{graphicx}
\usepackage{pifont}
\usepackage[all]{xy}
\usetikzlibrary{positioning, shapes.geometric}
\usepackage{pgfkeys}
\usepackage{pgffor}
\usepackage{pgfcalendar}
\usepackage{pgfpages}

\usepackage{color}
\usepackage[colorlinks=true]{hyperref}
\hypersetup{
    linkcolor=blue,          
    citecolor=red,        
    filecolor=blue,      
    urlcolor=cyan
}

\usepackage{color}
\usepackage{ulem}

\usetikzlibrary{decorations,arrows}
\usetikzlibrary{decorations.markings}
\usepackage{wrapfig}

\definecolor{MyDarkBlue}{cmyk}{0.8,0.3,0.8,0.4}
\definecolor{yellow}{rgb}{0.99,0.99,0.70}
\definecolor{orange}{rgb}{0.00,0.00,0.00}

\definecolor{white}{rgb}{1.0,1.0,1.0}
\definecolor{black}{rgb}{0.00,0.00,0.00}
\definecolor{MColor}{rgb}{0.65, 0.2, 0.16}
\definecolor{MColor1}{rgb}{0.5, 0.1, 0.8}
\definecolor{Dgreen}{rgb}{0.0,0.0,0.0}
\definecolor{Dkgreen}{rgb}{0.4, 0.69, 0.2}
\newcommand{\black}{\color{black}}
\newcommand{\red}{\color{MColor}}
\newcommand{\ZXC}{\color{black}}

\numberwithin{equation}{section}
\def\theequation{\arabic{section}.\arabic{equation}}
\newcommand{\be}{\begin{eqnarray}}
\newcommand{\ee}{\end{eqnarray}}
\newcommand{\ce}{\begin{eqnarray*}}
\newcommand{\de}{\end{eqnarray*}}
\newtheorem{theorem}{Theorem}[section]
\newtheorem{lemma}[theorem]{Lemma}
\newtheorem{remark}[theorem]{Remark}
\newtheorem{definition}[theorem]{Definition}
\newtheorem{proposition}[theorem]{Proposition}
\newtheorem{Examples}[theorem]{Example}
\newtheorem{corollary}[theorem]{Corollary}


\def\eps{\varepsilon}

\def\e{\mathrm{e}}

\def\p{\partial}

\def\[{{\Big[}}
\def\]{{\Big]}}
\def\<{{\langle}}
\def\>{{\rangle}}
\def\({{\big(}}
\def\){{\big)}}

\def\Law{{\mathord{{\rm Law}}}}
\def\dif{{\mathord{{\rm d}}}}

\def\bbp{{\boldsymbol{p}}}

\def\bbq{{\boldsymbol{q}}}
\def\bba{{\boldsymbol{a}}}

\def\Gap{{\Lambda}}
\def\bb2{{\boldsymbol{2}}}
\def\no{\nonumber}
\def\={&\!\!=\!\!&}

\def\bB{{\mathbf B}}

\def\bC{{\mathbf C}}

\def\cA{{\mathcal A}}

\def\cD{{\mathcal D}}

\def\cH{{\mathcal H}}
\def\cI{{\mathcal I}}

\def\cL{{\mathcal L}}
\def\cM{{\mathcal M}}
\def\cN{{\mathcal N}}

\def\cP{{\mathcal P}}

\def\cR{{\mathcal R}}
\def\cS{{\mathcal S}}
\def\cT{{\mathcal T}}
\def\cU{{\mathcal U}}

\def\mB{{\mathbb B}}

\def\mE{{\mathbb E}}

\def\mL{{\mathbb L}}

\def\mN{{\mathbb N}}

\def\mP{{\mathbb P}}

\def\mR{{\mathbb R}}
\def\mS{{\mathbb S}}

\def\b1{{\mathbbm 1}}

\def\sB{{\mathscr B}}

\def\sI{{\mathscr I}}

\def\sR{{\mathscr R}}
\def\sS{{\mathscr S}}

\def\E{\mathbb E}

\def\geq{\geqslant}
\def\leq{\leqslant}
\def\ge{\geqslant}
\def\le{\leqslant}

\def\c{\mathord{{\bf c}}}
\def\div{\mathord{{\rm div}}}

\def\eps{\varepsilon}

\def\e{\mathrm{e}}

\def\p{\partial}

\def\[{{\Big[}}
\def\]{{\Big]}}
\def\<{{\langle}}
\def\>{{\rangle}}

\def\Law{{\mathord{{\rm Law}}}}

\def\no{\nonumber}
\def\={&\!\!=\!\!&}
\def\bt{\begin{theorem}}
\def\et{\end{theorem}}
\def\bl{\begin{lemma}}
\def\el{\end{lemma}}
\def\br{\begin{remark}}
\def\er{\end{remark}}
\def\bd{\begin{definition}}
\def\ed{\end{definition}}
\def\bp{\begin{proposition}}
\def\ep{\end{proposition}}
\def\bc{\begin{corollary}}
\def\ec{\end{corollary}}

\def\geq{\geqslant}
\def\leq{\leqslant}
\def\ge{\geqslant}
\def\le{\leqslant}

\def\c{\mathord{{\bf c}}}
\def\div{\mathord{{\rm div}}}

 \def\R{\mathbb R}
 \def\R{\mathbb R}

\def\<{\langle} \def\>{\rangle}

\def\x{{\mathbf x}}

\def\bbp{{\boldsymbol{p}}}

\def\bbq{{\boldsymbol{q}}}
\def\bbd{{\boldsymbol{d}}}
\def\bba{{\boldsymbol{a}}}
\def\bbk{{\boldsymbol{k}}}

\def\bb2{{\boldsymbol{2}}}
\def\bbb1{\boldsymbol{1}}
\def\bbinfty{\boldsymbol{\infty}}
\def\no{\nonumber}
\def\={&\!\!=\!\!&}

\def\nalpha{{\alpha}} 
\def\modulateorder{{\zeta}} 
\def\rate{{\Upsilon}}
\def\ind{{~\hbox{\rm l\kern-.4em\hbox{\rm l}}}~} 
\def\transOp{{\Gamma}}
\def\mstable{{\mathfrak m}}%

\def\Levy{L}
\def\Tup{T_0}

\newcommand{\comm}[1]{}
\begin{document}

\title[Propagation of chaos of McKean-Vlasov SDEs]
{Propagation of chaos for moderately interacting particle systems related to 
	singular kinetic McKean-Vlasov SDEs}

\author{Zimo Hao, Jean-Francois Jabir, St\'ephane Menozzi, Michael R\"ockner, Xicheng Zhang}

\address{Zimo Hao:
School of Mathematics and Statistics, Beijing Institute of Technology, Beijing 100081, China.\\
Email: zimo\_hao@163.com} 

\address{Jean-Francois Jabir:
	Department of Statistics and Data Analysis, and Laboratory of Stochastic Analysis and its Application, Moscow, Russia Federation. \\
	Email address: jjabir@hse.ru }

\address{St\'ephane Menozzi:
	LaMME, Universit\'e d'Evry Val d'Essonne, Universit\'e Paris-Saclay, CNRS (UMR 8071), 23 Boulevard de France
	91037 Evry, France.\\
	Email address: stephane.menozzi@univ-evry.fr}

\address{Michael R\"ockner:
Fakult\"at f\"ur Mathematik, Universit\"at Bielefeld,
33615, Bielefeld, Germany\\
and Academy of Mathematics and Systems Science, CAS, Beijing, China\\
Email: roeckner@math.uni-bielefeld.de
 }

\address{Xicheng Zhang:
School of Mathematics and Statistics, Beijing Institute of Technology, Beijing 100081, China\\
Email: XichengZhang@gmail.com
 }

\thanks{
This work is partially supported by the National Natural Science Foundation of China (NSFC, Grant No. 12595282)  and by the German Research Foundation (DFG) through the 
Collaborative Research Centre(CRC) 1283/2 2021 - 317210226 ``Taming uncertainty and profiting from randomness and low regularity in analysis, stochastics and their applications". The second author acknowledges 
 the support of the Russian Science Foundation project (project No. 24-11-00123).}

\begin{abstract}

We study the propagation of chaos \textcolor{Dgreen}{in} a class of moderately interacting particle systems for the approximation of singular kinetic McKean-Vlasov SDEs driven by $\alpha$-stable processes.
 Diffusion parts include Brownian ($\alpha=2$) and pure-jump ($\alpha\in(1,2))$ perturbations and interaction kernels are considered in a non-smooth anisotropic Besov space. Using Duhamel formula, sharp density estimates (recently issued in \cite{HRZ23}), and suitable martingale functional inequalities, we obtain direct estimates on the \textcolor{black}{convergence rate} between the empirical measure of the particle systems toward the McKean-Vlasov distribution. These estimates further lead to quantitative propagation of chaos results in the weak and strong sense.

\bigskip
\noindent
\textbf{Keywords}: Weak/strong  propagation of chaos; Moderately interacting particle system\textcolor{Dgreen}{s}; Kinetic McKean-Vlasov SDE\textcolor{Dgreen}{s}; Distributional interaction kernel.\\

\noindent
\textbf{AMS Classification}:
\textbf{Primary: }{60H10} 
; \textbf{Secondary}: {60G52}, {35Q70}.

\end{abstract}
\maketitle \rm


\section{Introduction}
\subsection{McKean-Vlasov SDEs} 
In this paper, we establish quantitative propagation of chaos results (in the weak and pathwise sense) for a class of moderately interacting particle systems related to the (formal) second order stable-driven McKean-Vlasov SDEs given, up to some (possibly infinite) time horizon $T$, by \begin{align}\label{MV00}
		\ddot X_t=(b_t*\mu_{t})(X_t,\dot X_t)+\dot \Levy^{\nalpha}_t,
		\ \ (X_0,\dot X_0)\sim \mu_0, \ \ 0\le t\le T,
	\end{align}
	$\mu_t$ standing for the joint law of $(X_t,\dot X_t)$.
	The driving noise $L^{\nalpha}$ is defined as an $\R^d$-valued isotropic $\nalpha$-stable L\'evy process, $\mu_0\in\cP(\mR^{2d})$ corresponds to the fixed initial probability distribution 
	and $b$ is a time-dependent $\mR^{d}$-valued (Schwartz) distribution over the phase space $\mR^{2d}$. Finally, the component $b_t*\mu_t$ denotes\textcolor{Dgreen}{, whenever it exists,} the \textcolor{Dgreen}{convolution between $b_t$ and the law of the McKean-Vlasov SDE on the phase space for a.e. $t$.} 
	
	The stable noise is assumed to be given with a stability parameter $\nalpha$ in $(1,2]$ (the special case $\nalpha=2$ corresponding to the classical Gaussian noise with $\Levy^\alpha_t=\sqrt{2}W_t$ for $W$ a $d$-dimensional Brownian motion, and the case  $\nalpha\in(1,2)$ to the pure-jump case, the associated infinitesimal generator being given by the 
	fractional Laplacian $\Delta^{\nalpha/2}\textcolor{Dgreen}{:=- (-\Delta)^{\nalpha/2}}$). 
	
	Weak and strong wellposedness of \eqref{MV00} have been established in \cite{HRZ23}, in the case where $t\mapsto b_t$ lies in a mixed Besov space {\black of negative regularity}:
	$$
	b\in L^{q}([0,T];\bB^{\beta_b}_{\bbp_b;\bba}(\R^{2d}))=L^{q}_T\bB^{\beta_b}_{\bbp_b;\bba},
	$$
	where $L^q$ denotes the classical Lebesgue space on the time interval $[0,T]$ and $\bB^{\beta_b}_{\bbp_b;\bba}$ denotes an anisotropic Besov space  (see Section \ref{SEC_BESOV} for a precise definition)
,  \textcolor{black}{for
} 
a suitable set of integrability and regularity parameters $ q$, ${\color{black}\bbp_b}$ and $\beta_b <0$ respectively (the index $\bba $, given in \eqref{DEF_A}, will reflect  the intrinsic scales of the underlying kinetic system). 
	Notably, the authors in \cite{HRZ23} 
	established that $\mu_t$ sits in a balanced duality with $b$, the resulting drift component $b *\mu$ belonging to $L^s([0,T];L^\infty(\mR^{2d}))$ for some appropriate \textcolor{Dgreen}{$s>2$}. \textcolor{Dgreen}{(See also \cite{CdRJM-22,CdRJM-23} for similar results in the non-degenerate setting.)} 
	
	Empirically, Equation \eqref{MV00} describes the momentum of a generic body located at the position $X_t$ at any time $t$, evolving in \textcolor{Dgreen}{a} (possibly anomalous for $\alpha\in (1,2) $) medium and subject to the action of the distribution dependent force field $b_t*\mu_t$, where $b$ models a given interaction kernel. Formally, \eqref{MV00} arises as the mean field limit of the interacting particle system: 
	\begin{align}\label{MV00_PART}
	\left\{
	\begin{aligned}
		&\ddot X^{N,i}_t=\frac 1{N}\sum_{j=1}^N b_t(X^{N,i}_t-X^{N,j}_t,\dot X^{N,i}_t-\dot X^{N,j}_t)+\dot \Levy^{\nalpha,i}_t,\\ 
		&(X^{N,i}_0,\dot X^{N,i}_0)\sim \mu_0, \ \ 1\le i\le N,\ \ 0\le t\le T.
	\end{aligned}
	\right.
	\end{align}

	Introducing the velocity component $V_t:=\dot X_t$, \eqref{MV00} can be written as the system of first-order degenerate SDEs: 
	\begin{align}\label{MV1}
		\left\{
		\begin{aligned}
			&X_t=X_0+\int_0^t V_s d  s, \ \ 0\le t\le T,\\
			&V_t=V_0+\int^t_0(b_s*\mu_{s})(X_s,V_s) d  s+ \Levy^{\nalpha}_t.
		\end{aligned}
		\right.
	\end{align}
	If $b$ does not depend on the position variable $x$ 
	(i.e. $b_{t}(x,v)=b_{t}(v)$)
	then SDE \eqref{MV1} reduces to the following (first-order) non-degenerate (autonomous) McKean-Vlasov SDE:
	\begin{align}\label{MV2}
		 d  V_t=(b_t*\mu_t)(V_t) d  t+d\Levy^{\nalpha}_t, \ \ 0\le t\le T.
	\end{align}
	Formally, the one-time marginal laws $\{\mu_t\}_{t\ge 0}$ of $Z_t:=(X_t,V_t)$ give a distributional solution to the following kinetic nonlinear Fokker-Planck equation:
	\begin{align}\label{S1:FPE}
		\text{for a.e.} \ 0\le t\le T,\ \ \p_t \mu_t=(\Delta^{\frac{\nalpha}{2}}_v-v\cdot\nabla_x)\mu_t -\div_v ((b_t*\mu_t)\mu_t)\ \text{on}\ \R^{2d}. 
	\end{align}
	Establishing wellposedness of \eqref{S1:FPE}, in the class of anisotropic Besov spaces mentioned above and described in Section \ref{SEC_BESOV}, is a key step to derive weak and/or strong wellposedness of the nonlinear SDE \eqref{MV00}. 
	From a modeling point of view, while Gaussian noise remains central for the representation of diffusion for molecular motions, the interest for considering more general stable noises has been growing in many applicative fields such as  Physics, Biology or advances in deep learning. Experiments have notably evidenced that much more general non-Gaussian noises are at play in nature. We can for instance refer to heavy tailed and the ``jump and tumble" phenomena in cell motions, \cite{Perthame-04}; fractal or L\'evy flights pattern in turbulence mode\textcolor{Dgreen}{l}ling, \cite{ShZaFr-95} and the interest to learning processes based on stochastic gradient descent methods which naturally appear in the occurrence of fast and large excursions facilitating the exploration of multiple  minima, \cite{SiSaGu-19}, \cite{ZhLoWu-22}. Likewise, the interest for stochastic kinetic models naturally appears in the microscopic description of aggregative social and economical population dynamics (\cite{ParTos-14} and references therein), cell motions (see again \cite{Perthame-04}), in Computational Fluid Dynamics and the Lagrangian mode\textcolor{Dgreen}{l}ing of turbulent flows (\cite{Pope-00}, \cite{BBCJR-10}) or for the design of underdamped stochastic gradient descents (\cite{CCBJ-18}).

\noindent
	
	Let us point out that, for a singular kernel, giving a rigorous meaning to the formal particle system in \eqref{MV00_PART} is a rather involved task. For Lebesgue spaces this has been done in a non degenerate (i.e. no dependence in the $x$ variable for the kernel $b$) Brownian setting  in \cite{toma:23} and in \cite{HHMT-20} through Girsanov type arguments, and in \cite{HRZ22} through a Picard linearization approximation. It remains an open problem to extend the result to other stable noises in that setting or to more singular drifts.

A natural approach to circumvent this difficulty consists in introducing a moderately interacting version of the particle systems, where the initial interaction kernel is replaced by its convolution with an appropriate mollifier.
	This mollification operation provides the advantage that there are no issues about the definition of the corresponding particle system which is intrinsically well-posed. Propagation of chaos properties can be then estimated by attuning the order of the mollification with the particles size $N$. This procedure has been methodically employed in the literature, from the seminal papers \cite{Oe84,Oe85,Oe87} (from which the terminology {\it moderately interacting particle} used thereafter originates) to the recent works \cite{FOS20} and \cite{OlRiTo-21}. Notably, in \cite{Oe85,Oe87} (see also \cite{JM98}), the author derives propagation of chaos results from a fluctuation analysis between a (non-degenerate) case of McKean-Vlasov SDE with local interaction and a moderately interacting particle approximation. To briefly illustrate this derivation, taking \eqref{MV2} as a toy model, fluctuations are established from a non-asymptotic control of the distance, set in a suitable functional space, between $\text{Law}(V_t)$ and $\phi_N* \frac 1N\sum_{i=1}^N\delta_{\{V^{N,i}_t\}}$ ($\phi_N$ denoting the mollifier \textcolor{Dgreen}{for} the interaction). This way of capturing propagation of chaos naturally quantifies the cost of the mollification of the interactions, and is further natural for numerical applications and the validation of stochastic particle methods for nonlinear PDEs (see \cite{BT97,BFP97}, for the particular case of the one-dimensional Burgers equation). The more recent paper \cite{OlRiTo-21} propose\textcolor{Dgreen}{s} a further generalisation of Oelschl\"ager's approach and establish\textcolor{Dgreen}{es} original quantitative propagation of chaos results for cut-off moderately interacting particle systems with initial local $L^p$- or Riesz-like - interaction kernels (we may also refer to \cite{OlRiTo-22} for more particular physical and biological applications). 
	 Our results below further systematize the original setting in \cite{OlRiTo-21}, addressing kinetic dynamics, distributional interaction kernels and enclosing, in a consistent way, driving noises ranging from the classical Brownian case to pure-jump situations. {\black (Regarding the specific case of kinetic systems driven by Brownian motion, we may refer to the recent paper \cite{BC25} which address original propagation of chaos results related to sufficiently regular interaction kernels.)} While some current technical issues (that we briefly discuss in Remark \ref{REM_DRIFT}) restrict the spread of our results, {\color{black}a few applications are discussed in Section \ref{sec:4}. Further} characteristic thresholds (e.g. \cite{JW16,BJS-25}) and practical instances of singular stochastic kinetic dynamics will be addressed in future work.
	
	To analyse quantitatively propagation of chaos \textcolor{Dgreen}{for} a mollified version of \eqref{MV00_PART}, the procedure  
	consists in comparing the Duhamel representation of the Fokker-Planck equation \eqref{S1:FPE} and the expansion of the convolution of the empirical measure $\frac 1N\sum_{i=1}^N\delta_{\{X^{N,i}_t,V^{N,i}_t\}}$ with the underlying mollifier evaluated along the transport associated with the kinetic operator (see Section \ref{sec:ModeratedPropagation} for details). This comparison somehow amounts to measure a {\it moderate propagation of chaos} (see Theorem \ref{main01} below) and can consequently be used to derive pathwise and weak propagation of chaos.  The expansion yields an SPDE. There are then {\black two sources of error between the McKean-Vlasov SDE and the interacting particle system:} 
		\begin{trivlist}	
			\item[-] the first one, which is of \textit{stability type} and roughly corresponds to the difference between the Duhamel expansion and the drift of the SPDE, is handled through a priori regularity controls on the Fokker-Planck equation. Such controls were obtained in the current kinetic setting in \cite{HRZ23}, and we can also refer to \cite{CdRJM-22,CdRJM-23} in the non-degenerate case. We can mention that SDEs with non-regular coefficients have been thoroughly studied, i.e. for time inhomogeneous distributional drifts without non-linear dependence on the law. We can \textcolor{Dgreen}{cite} e.g. \cite{flan:isso:russ:17}, \textcolor{Dgreen}{\cite{athr:butk:mytn:18}} or \cite{CdRM-22} in the Young regime and \textcolor{Dgreen}{\cite{dela:diel:16}} or \cite{krem:perk:20} in which the authors manage to go beyond the thresholds appearing in the previous references assuming some appropriate underlying rough path structure. In that \textcolor{Dgreen}{last} setting defining the drift is a rather delicate point. Importantly, the current McKean-Vlasov \textcolor{Dgreen}{framework}, which tackles singular kernels, also benefits from the regularizing effects of the law through the convolution. In particular, from the smoothness of the law, it was derived in the quoted work\textcolor{Dgreen}{s} \textcolor{Dgreen}{\cite{HRZ23}, \cite{CdRJM-22,CdRJM-23}} that the nonlinear drift could eventually be seen as a \textit{usual} drift in some Lebesgue space from which some strong uniqueness results, that are important for pathwise propagation of chaos (see e.g. Theorem \ref{S1:main01} below),  were derived.
			\item[-] the second source is associated with a stochastic integral coming from the expansion of the empirical measure. This  term  needs to be evaluated in a suitable function space, i.e. the one for which the error is investigated. This naturally induces consider\textcolor{Dgreen}{ing} martingale type inequalities in Banach spaces of $M$-type or enjoying the UMD  (unconditional martingale difference) property. We refer to Section \ref{sec:Type} and \cite{Pisier-16} for further details.
		\end{trivlist}
		The paper is organized as follows: In Sections \ref{sec:1.2} and \ref{SEC_BESOV}, we introduce the moderately interacting
		particle approximation of Equation \eqref{MV1} and essential preliminaries on anisotropic Besov spaces, including key properties that will be used throughout the paper. Our main results on weak and strong propagation of chaos properties of the moderately interacting particle systems are stated 	\textcolor{Dgreen}{in Theorem \ref{S1:main01}} in Section \ref{sec:Main}.  
		The core of the corresponding proof is presented in Section \ref{sec:ModeratedPropagation}, where we introduce and prove	 \textcolor{Dgreen}{the aforementioned {\it moderate propagation of chaos} in} Theorem \ref{main01}. The proof of Theorem \ref{main01} is given in Section \ref{sec:2.2}, along some auxiliary estimates (Lemmas \ref{lem:HN} and \ref{lem:MN}) effectively proven in Section \ref{sec:ProofMain}.
		 From Theorem \ref{main01}, the proof of Theorem \ref{S1:main01} is addressed in Section \ref{sec:ProofMainThm}. 
		 	In the final part of the paper, we append extra-technical results essentially related to anisotropic Besov spaces and used throughout the paper. These results are: about weighted anisotropic Besov and related heat kernel estimates (Appendix \ref{sec:WeightedEstimates}); about the scaling properties of mollifiers under the norm of the anisotropic Besov space (Appendix \ref{APP_BESOV}); sampling error of the initial condition (Appendix \ref{CONV_EMP_SAMP_MU0}); functional properties relating the space $\bB^{\beta}_{\bbp;\bba}$ \textcolor{Dgreen}{to UMD and $M$-type Banach spaces} (Appendix \ref{sec:Type}); and Gronwall's inequality of Volterra type (Appendix \ref{sec:AppD}).

\subsection{Moderately interacting particle systems }\label{sec:1.2}

Consider the following particle system, with \textit{\textcolor{Dgreen}{ moderate interactions}}, as an approximation for the McKean-Vlasov SDE \eqref{MV1}:
\begin{align}\label{S1:00}
	\left\{
	\begin{aligned}
		&X^{N,i}_t=\xi^1_i+\int_0^t V^{N,i}_s\, d  s,\ \ i=1,2,\cdots,N,\\
		& V^{N,i}_t=\xi^{2}_i+\int_0^t (b^N_s*\mu^N_s)(Z^{N,i}_s)\, d  s+\Levy^{\nalpha,i}_t, 
	\end{aligned}
	\right.
\end{align}
where $Z^{N,i}_t:=(X^{N,i}_t, V^{N,i}_t)$, $Z^{N,i}_0=\xi_i=(\xi^1_i,\xi^2_i)$, $\mu^N_t:=\frac1N\sum_{i=1}^N\delta_{\textcolor{Dgreen}{\{}Z^{N,i}_t\textcolor{Dgreen}{\}}}$,  and 
\begin{equation}\label{MollifiedInteraction}
b^N_t(z):=b_t*\transOp_t\phi_N(z),\ \ \transOp_t f(x,v):=f(x-tv,v).
\end{equation}
Here, the $\{\Levy^{\nalpha,i}\}_{i=1}^N$ are independent copies of  $\Levy^\nalpha$, and $\{\xi_i\}_{i=1}^N$ is a family of i.i.d.  
$\mR^{2d}$-valued random variables with common law $\mu_0$.
Eventually, for some $\modulateorder\in(0,1]$
 and a smooth compactly supported
 and symmetric probability density function $\phi:\mR^{2d}\to[0,\infty)$, we define the mollifier
\begin{align}\label{PHN}
 \phi_N(x,v):=N^{(2+\alpha)\modulateorder d}\phi(N^{(1+\alpha)\modulateorder} x, N^{\modulateorder} v).
\end{align}
The scales in the mollifier reflect the homogeneity of the underlying distance,  see \eqref{AnisoDist}, which will be used to define the corresponding anisotropic Besov spaces. {\black In parallel, the parameter $\zeta$ controls the strength of the mollification, introduced in the particle system, which is needed to regularize the irregular drift $b$.} 
The operator $\Gamma_t$ represents the characteristic kinetic transport 
\textcolor{Dgreen}{and its application to $\phi_N$ will become relevant in our computations later on. Note that the presence of the composition $\Gamma_t\phi_N$ does not alter the nature of the mollifier as, in the distributional sense, $\lim_{N\to\infty}\Gamma_t \phi_N=\delta_{\{0\}}$.}

\subsection{Anisotropic Besov space and kinetic semi-group}\label{SEC_BESOV}
In this section we recall the definition of anisotropic Besov spaces with mixed integrability indices as well as their  
basic properties (see \cite{trie:14}, \cite{ZZ21} and \cite{HRZ23}).

For a multi-index $\bbp=(p_x,p_v)\in[1,\infty]^2$, we first define the Bochner-type iterated space $\mL^{\bbp}=L^{p_v}(\R^d;L^{p_x}(\R^d))=\{f:\R^{2d}\rightarrow\R \ \text{Borel measurable}:\,\|f\|_{\mL^\bbp}<\infty\}$ for \textcolor{Dgreen}{$L^p(\mathbb R^d)$ denoting the Lebesgue space on $\mathbb R^d$ and}
\begin{align}\label{LP1}
	\|f\|_{\mL^\bbp}:=\|f\|_{\bbp}:=\left(\int_{\mR^d}\|f(\cdot,v)\|_{p_x}^{p_v} d  v\right)^{1/p_v}.
\end{align}
\textcolor{Dgreen}{We adopt here} the \textcolor{Dgreen}{usual} convention \textcolor{Dgreen}{when one integrability index or the two integrability indices are $\infty$. That is,} 
	   for $\bbp=(p_x,\infty)$,
$\|f\|_{\bbp}:=\text{esssup}_{v}\|f(\cdot,v)\|_{p_x}$ \textcolor{Dgreen}{and $\bbp=(\infty,\infty)$, the $\mL^\bbp$-norm corresponds to $\text{esssup}_{v,x}\|f(x,v)\|$}.\comm{JF 01/04: Just a small rewriting to remove any possible confusion: originally, only the case $\bbp=(p_x,\infty)$ was illustrated, falsely suggesting that it will be of interest, the most frequently used case yet being $\bbp=(\infty,\infty)$. Side note: We have been often switching between the notations $\|\cdot\|_{\bbp}$ and $\|\cdot\|_{\mL^\bbp}$. Now the paper is more or less ok on this point, the notation $\|\cdot\|_{\bbp}$ being prominently used in the paper except when we use the iterated space $\L^m(\Omega;\mL^\bbp)$ is used, e.g. in the proofs of the weak-strong rates \eqref{AA200}, \eqref{AA300} and in the estimates for the initial chaos (Appendix \ref{CONV_EMP_SAMP_MU0}) and the "martingale" (Section \ref{sec:341}))} Note that the above $\mL^\bbp$-norm 
is invariant under \textcolor{Dgreen}{the} translation operator $\transOp_t$. 
Namely  
\begin{align}\label{AA1}
	\|\transOp_t f\|_{\bbp}=\left(\int_{\mR^d}\|f(\cdot-tv,v)\|_{p_x}^{p_v} d  v\right)^{1/p_v}=
	\left(\int_{\mR^d}\|f(\cdot,v)\|_{p_x}^{p_v} d  v\right)^{1/p_v}=\|f\|_{\bbp}.
\end{align}
This property will be \textcolor{Dgreen}{extensively} used and the operator $\transOp_t$ naturally appears in the current degenerate setting, as it encodes the transport associated with the (linear) first order vector field \textcolor{black}{in \eqref{S1:FPE}}. Plugging it into the mollifier will actually allow to get rid of the degenerate terms in the stability analysis of Section \ref{sec:ModeratedPropagation}.

Let $\bba$ be the scaling parameter given by
\begin{align}\label{DEF_A}
	\bba:=(1+\nalpha,1).
\end{align}
When $\alpha=2 $, \textcolor{Dgreen}{i.e. \textcolor{black}{in} the} Brownian case, the vector $\bba $ encodes the time exponents of the variances of the entries of $(\int_0^t Z_s d  s,Z_t)=\sqrt 2(\int_0^t W_s  d  s,W_t) $, i.e. $\E[(\int_0^t W_s  d  s)^2]=\textcolor{Dgreen}{\frac{2t^3}{3}}, \E[W_t^2]=2t $. When $\alpha\in (1,2) $, $\bba $ still appears when considering invariance or scaling properties of the joint density (see \eqref{SCL1} below).

For notational convenience, we shall write from here on:
\begin{align*}
	\tfrac{1}{\bbp}:=\Big(\tfrac{1}{p_x},\tfrac1{p_v}\Big),\qquad  \bba\cdot \tfrac d\bbp:=\tfrac{(1+\nalpha)d}{p_x}+\tfrac{d}{p_v},
\end{align*}
for any $\bbp,\bbq\in[1,\infty]^2$,
\begin{align}\label{KJ1}
\bbp\ge\bbq\Leftrightarrow p_x\ge q_x,\ p_v\ge q_v,\qquad \cA_{\bbp,\bbq}:=\bba\cdot(\tfrac d\bbp-\tfrac d\bbq),
\end{align}
\textcolor{Dgreen}{as well as}
$$
\bbp\vee \textcolor{Dgreen}{\bb2}
=(p_x\vee 2,p_v\vee 2),\ \ \bbp\wedge \textcolor{Dgreen}{\bb2}
=(p_x\wedge 2,p_v\wedge 2).
$$
Similarly, we use bold \textcolor{Dgreen}{symbols} to denote constant vectors in $\mR^2$, \textcolor{Dgreen}{that is},
\begin{align*}
	\bbb1:=(1,1),\quad\textcolor{Dgreen}{\boldsymbol{2}=(2,2),}\quad \bbd:=(d,d),\quad\textcolor{Dgreen}{\boldsymbol{\alpha}:=(\alpha,\alpha), \ \boldsymbol{\infty}:=(\infty,\infty).}
\end{align*}
\textcolor{Dgreen}{Finally, we} use \textcolor{Dgreen}{the symbol} $\bbp'$ to denote the conjugate index of $\bbp$, i.e.,
\begin{equation}\label{KineticConjugate}
\tfrac{1}{\bbp}+\tfrac{1}{\bbp'}=\bbb1 \,\textcolor{Dgreen}{\Leftrightarrow \bbp'=(p_x',p_v')\,\text{with}\,\frac{1}{p_x}+\frac{1}{p_x'}=1=\frac{1}{p_v}+\frac{1}{p_v'}}.
\end{equation}
\textcolor{Dgreen}{These general notation\textcolor{black}{s} settled, we shall now introduce the class of anisotropic Besov spaces $\bB^{\beta,q}_{\bbp;\bba}$ characterizing the singularity of the interaction kernel $b$. To this aim, let us introduce some preliminaries\textcolor{black}{.}}\comm{JF 01/04: Addition of a small sentence, for the transition between the listing of our notation convention and the definition of the anisotropic Besov space.}
For an $L^1$-integrable function $f$ \textcolor{Dgreen}{o}n $\mR^{2d}$, let $\hat f$ be the Fourier transform of $f$ defined by
$$
\hat f(\xi):=
\int_{\mR^{2d}} \e^{-{\rm i}\xi\cdot z}f(z) d  z, \quad\xi\in\mR^{2d},
$$
and $\check f$ the Fourier  inverse transform of $f$ defined by
$$
\check f(z):=(2\pi)^{-\textcolor{Dgreen}{2}d}\int_{\mR^{2d}} \e^{{\rm i}\xi\cdot z}f(\xi) d \xi, \quad z\in\mR^{2d}.
$$
For $z=(x,v)$ and $z'=(x',v')$ in $\mR^{2d}$, we introduce the anisotropic distance 
\begin{equation}\label{AnisoDist}
	|z-z'|_\bba:=|x- x'|^{1/(1+\alpha)}+|v-v'|. 
\end{equation}
Note that $z\mapsto|z|_{\bba}$ is not smooth at the origin. This distance appears very naturally in connection with the homogeneity of the underlying linear degenerate operator. In the kinetic setting we can refer to the work by Priola \cite{prio:09}, or \cite{CheZha-19, HWZ20, IS21, HZZZ22, HRZ23}, who employed this distance \textcolor{Dgreen}{for establishing } Schauder type estimates. In the current work, it will be used in order to define the corresponding anisotropic Besov spaces (see Definition \ref{bs} below).

For $r>0$ and $z\in\mR^{2d}$, we also introduce the ball centred at $z$ and with radius $r$ with respect to the above distance
as follows:
$$
B^\bba_r(z):=\{z'\in\mR^{2d}:|z'-z|_\bba\leq r\},\ \ B^\bba_r:=B^\bba_r(0).
$$
Let $\chi^\bba_0$ be  a symmetric $C^{\infty}$-function  on $\mR^{2d}$ with
$$
\chi^\bba_0(\xi)=1\ \mathrm{for}\ \xi\in B^\bba_1\ \mathrm{and}\ \chi^\bba_0(\xi)=0\ \mathrm{for}\ \xi\notin B^\bba_2.
$$
We define
$$
\phi^\bba_j(\xi):=
\left\{
\begin{aligned}
	&\chi^\bba_0(2^{-j\bba}\xi)-\chi^\bba_0(2^{-(j-1)\bba}\xi),\ \ &j\geq 1,\\
	&\chi^\bba_0(\xi),\ \ &j=0,
\end{aligned}
\right.
$$
where for $s\in\mR$ and $\xi=(\xi_1,\xi_2)$,
$$
2^{s\bba }\xi=(2^{(1+\alpha)s}\xi_1, 
2^{s}\xi_2).
$$
Note that
\begin{align}\label{Cx8}
	{\rm supp}(\phi^\bba_j)\subset\big\{\xi: 2^{j-1}\leq|\xi|_\bba\leq 2^{j+1}\big\},\ j\geq 1,\ {\rm supp}(\phi^\bba_0)\subset B^\bba_2,
\end{align}
and
\begin{align}\label{AA13}
	\sum_{j\geq 0}\phi^\bba_j(\xi)=1,\ \ \forall\xi\in\mR^{2d}.
\end{align}

Let $\cS$ be the space of all Schwartz functions on $\mR^{2d}$ and $\cS'$ be the dual space of $\cS$, \textcolor{Dgreen}{that is} the tempered distribution space.
For given $j\geq 0$, the  dyadic anisotropic block operator  $\mathcal{R}^\bba_j$ is defined on $\cS'$ by
\begin{align}\label{Ph0}
	\mathcal{R}^\bba_jf(z):=(\phi^\bba_j\hat{f})\check{\ }(z)=\check{\phi}^\bba_j*f(z),
\end{align}
where the convolution is understood in the distributional sense and by scaling,
\begin{align}\label{SX4}
	\check{\phi}^\bba_j(z)=2^{(j-1)(2+\nalpha)d}\check{\phi}^\bba_1(2^{(j-1)\bba}z),\ \ j\geq 1.
\end{align}
Similarly, we can define the \textcolor{Dgreen}{classical} isotropic block operator $\cR_jf=\check\phi_j*f$ in $\mR^d$, \textcolor{Dgreen}{for $\{\phi_j\}_{j\ge 0}$ a smooth partition of unity of $\mR^d$} where
\begin{align}\label{Cx9}
	{\rm supp}(\phi_j)\subset\big\{\xi: 2^{j-1}\leq|\xi|\leq 2^{j+1}\big\},\ j\geq 1,\ {\rm supp}(\phi_0)\subset B_2\textcolor{Dgreen}{(0)}.
\end{align}
Now we introduce the following anisotropic Besov spaces \textcolor{Dgreen}{and mixed (anisotropic) Besov spaces}.
\begin{definition}\label{bs}
	Let $s\in\mR$, $q\in[1,\infty]$ and $\bbp\in[1,\infty]^2$. The  anisotropic Besov space is defined by
	$$
	\mathbf{B}^{s,q}_{\bbp;\bba}:=\left\{f\in \cS'\textcolor{Dgreen}{(\mR^{2d})}\ : \  \|f\|_{\mathbf{B}^{s,q}_{\bbp;\bba}}
	:= \left(\sum_{j\geq0}\big(2^{ js}\|\cR^\bba_{j}f\|_{\bbp}\big)^q\right)^{1/q}<\infty\right\},
	$$
	where $\|\cdot\|_\bbp$ is defined in \eqref{LP1}. Similarly, one defines the usual  isotropic Besov spaces  $\bB^{s,q}_{p}$ 
	in $\mR^d$ in terms of isotropic block operators $\cR_j$. If there is no confusion, we shall write
	$$
	\bB^s_{\bbp;\bba}:=\bB^{s,\infty}_{\bbp;\bba}.
	$$
	For  $s_0,s_1\in\mR$, the mixed Besov space is defined by
	$$
	\bB^{s_0,s_1}_{\bbp;x,\bba}:=\left\{f\in \cS'\textcolor{Dgreen}{(\mR^{2d})}\ : \ \|f\|_{\mathbf{B}^{s_0,s_1}_{\bbp;x,\bba}}
	:= \sup_{k,j\geq 0}2^{\frac{ks_0}{1+\alpha}}2^{ js_1}\|\cR^x_k\cR^\bba_{j}f\|_\bbp
	<\infty\right\},
	$$
	where for \textcolor{Dgreen}{any
	} $f:\mR^{2d}\to\mR$, 
	$$
	\cR^x_kf(x,v):=\cR_k f(\cdot,v)(x).
	$$
\end{definition}
\textcolor{Dgreen}{The space $\bB^{s}_{\bbp;\bba}$ (with $s<0$) will characterize the class of distributional interaction kernel $b$ and the irregularity-integrability ranges of $\mu_0$.} The mixed Besov space $\bB^{s_0,s_1}_{\bbp;x,\bba}$ will be essentially used in the study of strong convergence for the propagation of chaos.

\subsection{Main results}\label{sec:Main}
To state our main results, we introduce the following assumptions:
\textit{
\begin{enumerate}[{\bf (H)}]
    \item  \textit{$b\in L^\infty(\mR_+;\bB^{\beta_b}_{\bbp_b;\bba})$ and $\mu_0\in \mathcal P(\R^{2d}) \cap \bB^{\beta_0}_{\bbp_0;\bba}$ with $\bbp_0\textcolor{Dgreen}{=(p_{x,0},p_{v,0})}\in [1,\infty]^2$, 	$\bbp_b\textcolor{Dgreen}{=(p_{x,b},p_{v,b})}\in[1,\infty{\color{black}]}^2$ 
     \textcolor{Dgreen}{satisfying} $\frac1{\bbp_0}+\frac1{\bbp_b}\ge\bbb1$, and $\beta_b\le0$ and $\beta_0\in(-1,0)$ \textcolor{Dgreen}{are such that} 
    \begin{align}
        0<\Gap:=\cA_{\bbp_0,\bbp_b'}-\beta_0
        -\beta_b=\bba\cdot\tfrac{d}{\bbp_0}-\beta_0+\bba\cdot\tfrac{d}{\bbp_b}-\beta_b-(2+\nalpha)d
        <\alpha-1.\label{DEF_GAP}
    \end{align}}
    \end{enumerate}
}

We shall use the following parameter set:
$$
\Theta:=\big(\alpha,d, \bbp_0,\bbp_b,\beta_0,\beta_b,\|\mu_0\|_{\bB^{\beta_0}_{\bbp_0;\bba}},\|b\|_{
	\textcolor{Dgreen}{L}^\infty(\mR_+;\bB^{\beta_b}_{\bbp_b;\bba})}\big),
$$
\textcolor{Dgreen}{to highlight, when pertinent, the dependency of estimates on the parameters $\alpha$, $d$, etc., e.g. w}hen we say $C=C(\Theta)$, it means that the constant $C$ may depend on all or part of the parameters in $\Theta$. \textcolor{Dgreen}{Additional dependency on the order $\modulateorder$ of the mollification or other relevant parameters will be emphasized in the same way.}

Under the above assumptions,  by \cite[Theorem 1.3]{HRZ23}, there exists a finite time horizon $\Tup=\Tup(\Theta)>0$ for which the McKean-Vlasov SDE \eqref{MV1} admits a unique weak solution $Z_t\textcolor{Dgreen}{=(X_t,V_t)}$ on the 
time interval $[0,\Tup]$. Under the stronger condition that $b$ is in  $L^\infty([0,\Tup],\bB^{s_0,s_1}_{\bbp_0;x,\bba})$, where $\bB^{s_0,s_1}_{\bbp_0;x,\bba}$ is as in Definition \ref{bs},
 {\black and the regularity parameters $s_0$, $s_1$ satisfy }
\begin{equation}\label{StrongSol}
s_0=1+\tfrac\alpha2,\qquad s_1=\tfrac \alpha{2}-1+\beta_b,\qquad \Gap\in\Big(0,\tfrac {3}{2}\alpha-2\Big),
\end{equation}
{\black where $\Gap$ is as in {\bf (H)},}
pathwise uniqueness \textcolor{Dgreen}{also} holds (see again \cite[Theorem 1.3]{HRZ23}). 

In order to give the range of $\modulateorder$ in the definition of \textcolor{Dgreen}{the} mollifier $\phi_N$, we introduce two quantities $\mstable_\alpha$ and $\theta_\alpha$ for later use 
\begin{align}\label{limitstable}
	\mstable_\alpha:=1/((p_{x,0}\wedge p_{v,0}\wedge 2)\vee\alpha),\quad \theta_\alpha:={\color{black}\big[\cA_{{\bbb1},\bbp_0\vee{\boldsymbol\alpha}}\big]\vee \big[\cA_{\bbp_b,{\boldsymbol\infty}}-(1+\alpha)\beta_b\big],\ \alpha\in(1,2].} 
\end{align}

The main result of this paper is encompassed in the following:
\bt\label{S1:main01}
Suppose that {\bf (H)} holds, and that either $\bbp_0>\boldsymbol{\alpha}$, or $(\alpha,\bbp_0)=(2,\bbb1)$.
{\black Let $\{\xi_i\}_{i=1}^N$ be, as in \eqref{S1:00}, a family of i.i.d.  
$\mR^{2d}$-valued random variables with common law $\mu_0$, and assume that $\mu_0$ satisfies}:
$$
\mu_0(|\cdot|^{m}_\bba)\textcolor{Dgreen}{:=\mathbb E[|(X_0,V_0)|^m_{\bba}]}<\infty,\ \ \forall m\in\mN.
$$
Then, for any 
$\beta,\zeta>0$ with the following upper bounds
$$
\beta<\textcolor{orange}{\bar \beta_\alpha:=\begin{cases}1-\Lambda\ \textcolor{Dgreen}{\text{if}}\ \alpha=2,\\
(\alpha-1-\Lambda)\wedge((\textcolor{Dgreen}{\alpha+\beta_0}-\Lambda)/2)\ \textcolor{Dgreen}{\text{if}}\ \alpha\in (1,2),\end{cases}} \ 
{\rm and }\ \zeta<(1-\mstable_\alpha)/\theta_\alpha,
$$
\textcolor{Dgreen}{and, for any $\eps>0$}
there is a constant $C=C(\Theta\textcolor{Dgreen}{,m},\beta,\modulateorder,\eps)>0$ such that for all \textcolor{Dgreen}{$N\ge 1$}, and $t\in[0,\Tup]$,
\begin{align}\label{AA200}
\|\cL(Z^{N,1}_t)-\cL(Z_t)\|_{\rm var}\le&   CN^{-\beta\zeta}+CN^{\mstable_\alpha-1+\zeta\theta_\alpha+\eps}, 
\end{align}
where $\|\cdot\|_{\rm var}$ is the total variation distance on $\mathcal P(\R^{2d})$, {\black where $Z^{N,1}=(X^{N,1},V^{N,1})$ and $Z = (X, V)$ denote the solutions to the particle system~\eqref{S1:00} and the McKean-Vlasov SDE~\eqref{MV1}, respectively.} 
Moreover, if in addition $1-\alpha/2<(\alpha+\beta_0-\Lambda)/2$, $b$ lies in the mixed Besov space $ L^\infty(\mR_+;\bB^{s_0,s_1}_{\bbp_b;x,\bba})$ and \eqref{StrongSol} holds,  then for any \textcolor{Dgreen}{$\zeta$ as above and for $\beta$ in the range
}
$$
\beta\in(1-\alpha/2,\textcolor{orange}{\bar \beta_\alpha}
),
$$
\textcolor{Dgreen}{we have}
\begin{align}\label{AA300}
	\left\|\sup_{t\in[0,T_{0}]}|Z^{N,1}_t-Z^1_t|\right\|_{L^2(\Omega)}\le&  CN^{-\beta\zeta}+CN^{\mstable_\alpha-1+\zeta\theta_\alpha+\eps},
\end{align}
where $Z^1\textcolor{Dgreen}{=(X^1,V^1)}$ is the strong solution of \eqref{MV1} driven by the noise  $\Levy^{\alpha,1}$ and starting from $\xi_1$.
\et
\br[\textit{\textcolor{orange}{About the convergence rates.}}]\label{RK_RATES}
\textcolor{orange}{The convergence rate in the previous Theorem is mainly associated  with the characteristic parameters $\beta $ and $\mstable_\alpha $ (we recall that $\zeta $ appeared in the definition of the mollifier in the particle system, see \eqref{S1:00}-\eqref{PHN}).
\begin{trivlist}
\item[-] For the parameter $\beta $\textcolor{Dgreen}{,} two thresholds appear in the upper-bound. The first one, $\alpha-1-\Lambda $, comes from the smoothness of the underlying Fokker-Planck equation \textcolor{Dgreen}{\eqref{S1:FPE}} and corresponds to the \textit{gap} in the condition \eqref{DEF_GAP} (see e.g. \eqref{TO_EXPLAIN_FIRST_CTR_BETA}). The other contribution, $(\alpha+\beta_0-\Lambda)/2$, is related to time integrability issues associated with the norm for which we estimate the error. It only appears in the \textit{pure} stable case due to the constraint $\bbp_0>\boldsymbol \alpha $ whereas considering $\bbp_0=\mathbf 1$ in the Brownian case $ \alpha=2$ spares this contribution. The division by 2 follows from the quadratic nature of the nonlinearity in \eqref{S1:FPE} which also appears for the error analysis when $\bbp_0\neq\mathbf 1$ (see the proof of Theorem \textcolor{MColor}{\ref{main01}} p. \pageref{THE_PROOF_OF_THM_MAIN}).\\
\hspace*{.2cm}\\
On the other hand, when considering the pathwise approach, a lower bound is also needed in order to guarantee the sufficient smoothness on the underlying PDE to apply a Zvonkin type argument (see \eqref{S6:PDE},\eqref{S60}).
\item[-] For the parameter $\mstable_\alpha $ the range mainly corresponds to that of a limit Theorem. It actually appears from the control of a stochastic integral that appears in the SPDE satisfied by the error (see as well Theorem \ref{main01} and \eqref{Duh}, \eqref{MART_FIELD_I}). If $\alpha=2 $ then $ \mstable_\alpha=\frac 12$, which is the usual rate in a Gaussian central limit theorem. For $\alpha\in (1,2) $, since $\bbp_0>\boldsymbol \alpha $, $\mstable_\alpha=1/((p_{x,0}\wedge p_{v,0}\wedge 2)$. If $\bbp_0$ is somehow \textit{close} to $\boldsymbol \alpha$ then $\mstable_\alpha$ is close to $\frac 1\alpha $ yielding a \textit{principal rate} which is close to the stable limit theorem, i.e. in that case $1-\mstable_\alpha\simeq 1-\frac 1\alpha$. On the other hand, for \textit{large} values of $\bbp_0 $, the Gaussian regime again prevails.\\
In any case, the previous rates need to be deflated (for integrability reasons and because of the underlying singular term).  This naturally again makes the parameter $\zeta $ appear (see Lemma \ref{lem:MN}, \eqref{Duh}, \eqref{MART_FIELD_I}). Eventually, the parameter $\theta_\alpha $ is constrained by the choice of the function space in which we analyze the error that actually allows to exploit suitable controls on the underlying Fokker-Planck equation \eqref{S1:FPE}. 
\item[-] Balancing the two previous errors terms: in order that the two errors have the same range we have to set
$$N^{-\beta\zeta}=N^{\mstable_\alpha-1+\zeta\theta_\alpha+\eps} \iff \zeta(\theta_\alpha+\beta)=1-\mstable_\alpha-\eps\iff \zeta=\frac{1-\mstable_\alpha-\eps}{(\theta_\alpha+\beta)},$$
yielding a final bound in 
\begin{equation}\label{GeneralOptimalRate}
N^{-\frac{\beta}{\theta_\alpha+\beta}(1-\mstable_\alpha-\eps)} .
\end{equation}
\item[$\bullet $] For $\alpha=2 $, {\color{black}let us consider the case} $\bbp_0={\color{black}\bbb1}$ {\color{black}so that} $\Lambda=-\beta_0+\bba\cdot\tfrac{d}{\bbp_b}-\beta_b $. In that case $\Lambda$ can be arbitrarily small provided $ \beta_0,\beta_b$ and $\bbp_{\color{black}b} $ are respectively \textit{small} and \textit{large} enough{\color{black}. This makes $b$ close to a bounded interaction kernel and, as $\bbp_b=\boldsymbol{\infty}$, this yields, for $\theta_\alpha$,
	$$
	\cA_{\bbb1,\bbp_0\vee{\boldsymbol\alpha}}=\cA_{\bbb1,\bb2}= 2d,\quad \cA_{\bbp_b,\boldsymbol{\infty}}-(1+\alpha)\beta_b\simeq 0 \Rightarrow \theta_\alpha\simeq 2d.
	$$
	} In this setting, $\beta$ could be taken arbitrarily close to 1 and  the corresponding  rate would read $N^{-\frac{1}{2d+1}(\frac 12-\varepsilon)}=N^{-\frac{1}{4d+2}(1-2\varepsilon)}=N^{-\frac{1}{\bba\cdot {\boldsymbol d}+2}(1-2\varepsilon)} $. Since $\beta $ is close to $1$ in that case, the exponent in the previous convergence rate also corresponds   to the typical magnitude of the parameter
$\zeta $. For $N$ particles in dimension $2d$ this has to be compared  to the expected interaction range, which  from the multi-scale effect due to the kinetic dynamics would actually read\footnote{\textcolor{black}{From now on, for $a\in \R$, we denote be $a^-$ (resp. $a^+) $ any real number of the form $a-\varepsilon $ (resp. $a+\varepsilon $), $\varepsilon >0 $.}} $(1/{\color{black}(}d+3d{\color{black})})^-=(1/4d)^- $.
\item[$\bullet $] For $\alpha\in (1,2)$, if $\bbp_0$, $ \bbp_b'$ are sufficiently close and $\beta_0,\beta_b $ are small enough, then $\Lambda $ is again \textit{small}. Since $(\alpha-1-\Lambda)<(\textcolor{Dgreen}{\alpha}-\Lambda)/2\iff \alpha-\Lambda<2$, for $\beta_0 $ small enough, one can take $\beta $ arbitrarily close to $\alpha-1 $. If now $\bbp_0>\boldsymbol \alpha$ but close to $ \boldsymbol \alpha$ the rate writes:
$$N^{-\frac{\alpha-1}{\theta_\alpha+\alpha-1}(1-\frac 1\alpha-\varepsilon)}.$$
Eventually for $\bbp_0,\bbp_{b}' $ close and $\beta_b $ small $\theta_\alpha\simeq \mathcal A_{\boldsymbol 1,\bbp_0  }=\bba\cdot \frac{d}{\bbp_0'}\simeq d(2+\alpha)\frac{\alpha^+-1}{\alpha^+}$ and
the rate rewrites as
\begin{equation*} 
N^{-\frac{\alpha-1}{d(2+\alpha)+\alpha}(1-\frac{\alpha}{\alpha-1}\varepsilon)}=N^{-\frac{\alpha-1}{\bba\cdot {\boldsymbol d}+\alpha}(1-\frac{\alpha}{\alpha-1}\varepsilon)}\textcolor{black}{.}
\end{equation*}
\textcolor{black}{There is} then a continuity in the stability parameter for the final convergence rate. The previous ones are somehow the \textit{best} achievable rates. The more singular the parameter the worse the rate. 
\\
\hspace*{.2cm}\\
{\color{black}The above will be further illustrated in Section \ref{sec:4} in the case $b$ models a Riesz-type interaction kernel. Deriving optimal convergence rates enable to highlight the contribution of each parameter, $\mstable_\alpha$, $\beta$ and $\theta_\alpha$, (and their dependency with the singularity of $b$) and enable to draw some comparison with similar rates for (first-order) non-degenerate dynamics previously established in \cite{OlRiTo-21} and \cite{CGH-25}.}
\end{trivlist}
}
\textcolor{Dgreen}{Let us finally point out that the weak propagation of chaos result \eqref{AA200} shall be understood as a yet-to-be improved threshold, the rate being directly, as for \eqref{AA300}, derived from the moderate propagation of chaos stated in Theorem \ref{main01}. This leaves open the possibility of extended more refined semigroup techniques (e.g. \cite{CD18a}) to our present singular framework open. This will be addressed in future works.}
\er

\br[\textcolor{orange}{About the sampling of the initial condition}]

\textcolor{Dgreen}{Our results can be extended} to the case where the initial data $\xi_i$ are not \textcolor{Dgreen}{necessarily} i.i.d. variables. Specifically, \textcolor{Dgreen}{assuming only exchangeability of the initial states $(X^{N,i}_0,V^{N,i}_0)$}, the convergence \textcolor{Dgreen}{rates in Theorem \ref{S1:main01} (and Theorem \ref{main01})} still hol\textcolor{Dgreen}{d} \textcolor{Dgreen}{if}
$$
\lim_{N\to\infty} N^{c_0\zeta}\|\phi_N*(\mu^N_0-\mu_0)\|_{L^n(\Omega;\bB^{\beta_0-\beta}_{\bbp_0;\bba})}^n=0
$$
with some constants $c_0 > 0$ and $n \in \mathbb{N}$. \textcolor{Dgreen}{This assertion can be explicitly draw\textcolor{Dgreen}{n} from the proof of Theorem \ref{main01} below (see the estimates \eqref{SF3} to \eqref{SF5} in \textbf{Step 3}) from which Theorem \ref{S1:main01} is derived.} Since the representation involving $c_0$ and $n$ can be intricate, for simplicity, we \textcolor{Dgreen}{have} assume\textcolor{Dgreen}{d} independence and identical distribution of the initial data. \textcolor{Dgreen}{The} i.i.d. assumption \textcolor{Dgreen}{is made} \textcolor{black}{for simplicity}.
\er 
\br[\textcolor{Dgreen}{About the  integrability parameters $\bbp_b$ and $\bbp_0$}]\label{REM_DRIFT} 
\textcolor{Dgreen}{We would like to specify some points concerning the assumptions on the integrability parameter $\bbp_b\in [1,+\infty{\color{black}]}^2$ of the singular drift $b$ and the particular thresholds set on $\bbp_0$ in Theorem \ref{S1:main01} (and Theorem \ref{main01}). The restriction {\color{black}$\bbp_0\le \bbp_b'$ (equivalent to $\bbp_b\le \bbp_0'$) stated in {\rm \textbf{(H)}} is inherent} 
to the wellposedness of the McKean-Vlasov \eqref{MV1}, and might potentially be overcome in future works (see e.g. \cite{CdRJM-23} for the non-degenerate case).  As it will be actually seen through the propagation of chaos analysis below, the more particular condition $\bbp_0>\boldsymbol{\alpha}$ arises from the analysis of the stochastic fluctuations in the mollified empirical measure $\transOp_{t}\phi_N*\mu^N_t$ driving \eqref{S1:00} (see precisely \eqref{MART_FIELD_I}, \eqref{S1:P2} and \eqref{MMN} below). To control these fluctuations we have to rely on martingale type inequalities for 
stochastic integrals valued in Banach spaces satisfying some appropriate functional properties, notably, as in \cite{OlRiTo-21, OlRiTo-22}, the UMD property for the Brownian case $\alpha=2$, and the further M-type Banach space property for the jump case $\alpha\in(1,2)$. (Precise definitions of these spaces and properties related to our setting are presented in Appendix \ref{sec:Type}.) These functional properties are primarily to be satisfied by the subspace $\mathbb L^{\bbp_0}\cap \mathbb L^{\bbp_b'}$. Importantly, $\mathbb L^{\bbp}$ spaces have the  UMD property  for $\bbp=(p_x,p_v)$ when neither $p_x$ nor $p_v$ belonged to $\{1,\infty \} $.}
\textcolor{Dgreen}{Whenever $p_{x,b}=1$ or $p_{v,b}=1$ (or $\bbp=\bbb1$) we are led to consider $\textcolor{black}{p_{x,b'}=\infty}$
or $p_{v,b'}=\infty$ and $\mathbb L^{\bbp_b'} $ does not fulfill the aforementioned UMD property. This difficulty can be circumvented using the embedding \eqref{Sob1} between Besov spaces which ensures $b$ lies in the larger $\bB^{\tilde \beta_b}_{\tilde \bbp_b;\bba}$-space with $\bbp_b< \tilde\bbp_{b}$ ($\Rightarrow$ $\bbp'_b> \tilde\bbp'_{b}$) and $\tilde \beta_b=\beta_b-\cA_{\bbp_b,\tilde\bbp_b}$. One can so decrease the integrability exponent to a finite one, up to a slight decrease of the regularity of $b$. Since the differential index $\tilde \beta_b-\bba\cdot \tfrac d{\tilde \bbp_b}=\beta_b-\bba\cdot \tfrac d{\bbp_b}$ remains unchanged, the rates \eqref{AA200} and \eqref{AA300} in Theorem \ref{S1:main01} and the rate \eqref{S3:main01} in Theorem \ref{main01} are preserved. This approach applies to both the diffusive case $\alpha=2 $ and the pure jump case $\alpha\in (1,2) $.
}
\textcolor{Dgreen}{	
The particular case $\bbp_0=(1,1)$ features a more significant threshold. In the Brownian case, similarly to e.g. \cite{OlRiTo-21}, \cite{OlRiTo-22}, one can proceed with weighted $\mL^{\bbp}$ space with some $\bbp>\bbb1$ and the cost of introducing a weight is somehow absorbed by the Gaussian noise. Such procedure 
{\color{black}s}eemingly fails in the pure jump case, and for the case $\alpha\in(1,2)$ the condition $\bbp_0>\boldsymbol{\alpha}$ (ensuring $\mL^{\bbp_0}$ is a M-type Banach) is needed to exhibit a convergence rate.
}

The situation $\alpha\in (1,2)$ for \textit{large} integrability parameter $\bbp_b$ or integrability parameter $\bbp_0$ below $\boldsymbol{\alpha}$ is {\black a} particular open case in our paper and a natural restriction in regard to applications. Extending this present situation is at the moment beyond our proof arguments and would rather require an additional and suitable alteration (e.g. a truncation) of the L\'evy noise $L^{\alpha,i}$ in the mollified particle system \eqref{S1:00}. Importantly, observe that the constraint $p_{x,0}\wedge p_{v,0}>\alpha$ actually prevents the situations $p_{x,b}=\infty$ or $p_{v,b}=\infty$ {\black and excludes the formal consideration of fully bounded interactions} (when $\beta_b=0$) or partially bounded interactions. Namely when the kernel depends only on the velocity ($b_t(x,v)=h_1(t,v)$) or only the position component
	($b_t(x,v)=h_2(t,x)$), the integrability index $\bbp_b=(\bbp_{x,b},\bbp_{v,b})$ including so an $\infty$ component. {\color{black}While the latter will be the subject of further research, the former case can be (possibly) handled through a mild adaptation of our proof techniques - this is briefly discussed in Remark \ref{rem:NonDegenerateCase}, Section~\ref{sec:4}.}
\er
{\black\noindent
\textbf{Notation}. Throughout this paper, we use the symbol $C$ to denote a constant, whose values may vary from one line to another. The notation $:=$ is used to signify a definition. Finally, in order to lighten some computations, we will often use the shorten notation $A \lesssim B$ and $A \asymp B$ to indicate that there exists a constant $c \geq 1$ such that, respectively, $A \leq c B$ and $c^{-1} B \leq A \leq c B$.}

\section{Convergence of empirical measure via Duhamel's formula }\label{sec:ModeratedPropagation}

Throughout this section we assume {\bf (H)} \textcolor{Dgreen}{is in force} and recall \textcolor{Dgreen}{that}
\textcolor{black}{for} the time horizon $T_0>0$, 
the McKean-Vlasov SDE \eqref{MV1} admits a unique weak solution on $[0,\Tup]$.
 Moreover, by \cite[Theorem 3.11, (i)]{HRZ23}, the time marginal law $\mu_s( d  z)=u_s(z) d  z$ satisfies \textcolor{Dgreen}{the Fokker-Planck equation} \eqref{S1:FPE} and for any $\beta\geq 0$,
\begin{align}\label{S2:01}
	\sup_{s\in(0,T_0]}\left(s^{\frac{\beta-\beta_0}{\alpha}}\|u_s\|_{\bB^{\beta,1}_{\bbp_0;\bba}}+s^{\frac{\Gap+\beta}{\alpha}}\|u_s\|_{\bB^{\beta-\beta_b,1}_{\bbp_b'\textcolor{Dgreen}{;}\bba}}\right)<\infty,
\end{align}
where 
$\bbp'_b$ is the conjugate index of $\bbp_b$ \textcolor{Dgreen}{in the sense of \eqref{KineticConjugate}}.

Let $\phi_N$ be defined by \eqref{PHN}.
Consider the following  mollified approximation of the empirical measure $\mu^N_t$:
\begin{align}\label{S1:02}
    u^N_t(z):=\mu^N_t*\transOp_t{\phi_N}(z)=\frac1N\sum_{i=1}^N (\transOp_t\phi_N)(Z^{N,i}_t-z),
\end{align}
where $\transOp_t\phi_N$ corresponds to the composition of the transport operator $\transOp_t$ with $\phi_N$ namely:
$$
\transOp_t\phi_N(x,v)=N^{(2+\alpha)\modulateorder d}
\phi(N^{(1+\alpha)\modulateorder}(x-tv),N^{\modulateorder}v)\textcolor{black}{,}
$$
\textcolor{black}{
where $\phi $ is symmetric.
}
\textcolor{Dgreen}{Note that this specific choice for the mollifier involving the transport operator will precisely allow to derive \textcolor{Dgreen}{a suitable} 
	SPDE for the difference $u_t-u_t^N $ (see \eqref{DIFF_SPDE} below)}.

Let $b:\mR_+\times\mR^d\to \mR^d$\textcolor{Dgreen}{, $\beta_0$, $\bbp_0$ and $\Gap$}  be as in {\bf (H)}. For any $f\in\bB^{0,1}_{\bbp_0;\bba}\cap \bB^{-\beta_b,1}_{\bbp_b';\bba}$ and $\beta, t\geq 0$, we introduce 
\begin{equation}\label{BBT}
\|f\|_{\mS^\beta_t(b)}:=(1\wedge t)^{\frac{\beta-\beta_0}{\alpha}}\|f\|_{\bB^{0,1}_{\bbp_0;\bba}}+(1\wedge t)^{\frac{\beta+\Gap}{\alpha}}\|b_t*f\|_{\textcolor{Dgreen}{\boldsymbol\infty}
}.
\end{equation}
The main result of this section is the following convergence of \textcolor{Dgreen}{the mollified} empirical measure \textcolor{Dgreen}{\eqref{S1:02}}. 
\begin{theorem}\label{main01} 
{\black Suppose that {\bf (H)} holds, and that either $\bbp_0>\boldsymbol{\alpha}$, or $(\alpha,\bbp_0)=(2,\bbb1)$. Let $\{\xi_i\}_{i=1}^N$ be, as in \eqref{MV1}, a family of i.i.d.  
$\mR^{2d}$-valued random variables with common law $\mu_0$, satisfying}:
\begin{align}\label{SF1}
\mu_0(|\cdot|^{m}_\bba)<\infty,\ \ \forall m\in\mN.
\end{align}
Given $\mstable_\alpha$ and $\theta_\alpha$ as in \eqref{limitstable}, for any $\beta,\zeta>0$ \textcolor{Dgreen}{such that}
\begin{align}\label{BetaRange}
\beta<(\alpha-1-\Lambda)\wedge((\textcolor{Dgreen}{\alpha+\beta_0}-\Lambda)/2),\ \ \zeta<(1-\mstable_\alpha)/\theta_\alpha,
\end{align}
\textcolor{Dgreen}{and for any $\eps>0$, $m>0$,}
	there \textcolor{Dgreen}{exists} a constant $C=C(\Theta,\zeta,\beta,\eps,m)>0$ such that for all  \textcolor{Dgreen}{$N\ge 1$},
	\begin{align}\label{S3:main01}
		\sup_{t\in[0,T_0]}\|u^N_t-u_t\|_{L^m(\Omega;\mS^\beta_t(b))}\le CN^{-\beta\zeta}+CN^{\mstable_\alpha-1+\zeta\theta_\alpha+\eps}.
	\end{align} 
	\end{theorem}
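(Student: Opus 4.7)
The plan is to derive an SPDE for the error $\cU^N_t := u_t - u^N_t$, invert it via Duhamel's formula against the kinetic Fokker--Planck semigroup $P_t$ generated by $\cL := \Delta_v^{\alpha/2}-v\cdot\nabla_x$, and then control each resulting piece in the composite norm $\|\cdot\|_{\mS^\beta_t(b)}$. To obtain the SPDE for $u^N_t$ I would apply It\^o's formula to $(\transOp_t\phi_N)(Z^{N,i}_t-z)$ at fixed parameter $z$ and sum over $i=1,\dots,N$. The algebraic identity $(\partial_t+v\cdot\nabla_x)(\transOp_t\phi_N)=0$ built into $\transOp_t$ causes the time--derivative contribution to cancel exactly the transport contribution from $dX^{N,i}_t=V^{N,i}_t\,dt$; using in addition the symmetry of $\phi$ to identify $\Delta_V^{\alpha/2}$ and $\Delta_v^{\alpha/2}$ when acting on $(\transOp_t\phi_N)(Z^{N,i}_t-z)$, this yields the clean SPDE
\begin{align*}
\partial_t u^N_t = \cL u^N_t - \mathrm{div}_v\bigl[\transOp_t\phi_N * ((b^N_t*\mu^N_t)\mu^N_t)\bigr] + \dot{M}^N_t,
\end{align*}
where $M^N_t$ is the measure--valued martingale generated by the $N$ independent compensated $\alpha$-stable noises. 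Subtracting the Fokker--Planck equation \eqref{S1:FPE} and applying Duhamel's formula then gives
\begin{align*}
\cU^N_t = \cI^N_t + \cH^N_t + \cM^N_t,
\end{align*}
with initial error $\cI^N_t := P_t(u_0-\phi_N*\mu^N_0)$, stability/drift term $\cH^N_t := -\int_0^t P_{t-s}\,\mathrm{div}_v\bigl[(b_s*u_s)u_s - \transOp_s\phi_N*((b^N_s*\mu^N_s)\mu^N_s)\bigr]\,ds$, and stochastic convolution $\cM^N_t := \int_0^t P_{t-s}\,dM^N_s$.

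For $\cI^N_t$ I would split the initial error as $u_0-\phi_N*\mu^N_0 = (u_0-\phi_N*u_0)+\phi_N*(u_0-\mu^N_0)$. The mollification piece is controlled by the anisotropic scaling bound $\|u_0-\phi_N*u_0\|_{\bB^{\beta_0-\beta}_{\bbp_0;\bba}}\lesssim N^{-\beta\zeta}\|u_0\|_{\bB^{\beta_0}_{\bbp_0;\bba}}$ of Appendix \ref{APP_BESOV}, while the sampling piece is an i.i.d.\ empirical--measure error whose $L^m(\Omega)$-norm is controlled by Appendix \ref{CONV_EMP_SAMP_MU0}; after semigroup smoothing both contribute to the $N^{-\beta\zeta}$ part of the rate in the $\mS^\beta_t(b)$ norm. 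For $\cH^N_t$ I would split the drift difference into mollification errors on $(b_s*u_s)u_s$ together with differences of the form $(b_s*u_s-b^N_s*\mu^N_s)u_s$ and $(b^N_s*\mu^N_s)(u_s-u^N_s)$; combining the a priori Besov regularity \eqref{S2:01} of $u_s$, the translation invariance \eqref{AA1} of $\|\cdot\|_{\bbp}$ under $\transOp_s$, and standard anisotropic--Besov multiplication and heat--kernel smoothing estimates, each piece is bounded either by $N^{-\beta\zeta}$ or by a time--singular Volterra kernel multiplied by $\|\cU^N_s\|_{\mS^\beta_s(b)}$. This is the content of the auxiliary Lemma \ref{lem:HN}.

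The stochastic convolution $\cM^N_t$ is the delicate point: it is a Banach--space--valued stochastic integral of the singular kernel $\nabla_v[P_{t-s}(\transOp_s\phi_N)](\cdot-Z^{N,i}_{s-})$ against $N$ independent compensated stable noises. For $\alpha=2$, the BDG inequality combined with UMD--space martingale estimates yields the Gaussian rate $N^{-1/2}$. For $\alpha\in(1,2)$, however, the $\alpha$-stable noises possess only $p$-th moments for $p<\alpha$, so one must rely on the $M$-type Banach space inequalities of Appendix \ref{sec:Type} applied to $\mathbb{L}^{\bbp_0}\cap\mathbb{L}^{\bbp_b'}$; this is precisely what forces the threshold $\bbp_0>\boldsymbol\alpha$ and generates the exponent $\mstable_\alpha$. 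Combining these martingale bounds with the scaling cost $N^{\zeta\theta_\alpha}$ of the gradient--kernel convolution produces the second rate $N^{\mstable_\alpha-1+\zeta\theta_\alpha+\eps}$, which is the content of Lemma \ref{lem:MN}. Inserting the three estimates into the definition of $\mS^\beta_t(b)$ yields a singular Volterra--type integral inequality for $\|\cU^N_t\|_{L^m(\Omega;\mS^\beta_t(b))}$; the time weights $(1\wedge t)^{(\beta-\beta_0)/\alpha}$ and $(1\wedge t)^{(\beta+\Lambda)/\alpha}$ in $\mS^\beta_t(b)$ are tuned so that the Volterra kernel is admissible for the Gronwall lemma of Appendix \ref{sec:AppD}, which then closes the proof. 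The main obstacle is the stochastic convolution in the pure--jump regime, because the absence of second moments of $L^\alpha$ compels us to work in $M$-type spaces rather than via BDG; the two upper bounds on $\beta$ in \eqref{BetaRange} arise from balancing, on the one hand, the kinetic smoothing \eqref{S2:01} against the drift singularity in $\cH^N_t$ (giving $\beta<\alpha-1-\Lambda$) and, on the other hand, the smoothing against the stochastic--convolution singularity in $\cM^N_t$ (giving $\beta<(\alpha+\beta_0-\Lambda)/2$).
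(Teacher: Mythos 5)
Your overall architecture — derive an SPDE for $\cU^N_t$ via It\^o's formula applied to $(\transOp_t\phi_N)(Z^{N,i}_t-z)$, invert by Duhamel into $\cI^N_t$, $\cH^N_t$, $\cM^N_t$, estimate each in $\mS^\beta_t(b)$, and close by a singular Gronwall lemma — matches the paper's proof, and you correctly identify the role of the UMD/M-type machinery and the origin of the $\bbp_0>\boldsymbol\alpha$ threshold. However, there is a genuine gap in how you propose to close the estimate.

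You state that each piece of $\cH^N_t$ is ``bounded either by $N^{-\beta\zeta}$ or by a time--singular Volterra kernel multiplied by $\|\cU^N_s\|_{\mS^\beta_s(b)}$'' and that a standard Volterra--Gronwall argument then finishes. This is what happens when $\bbp_0=\bbb1$ (forcing $\alpha=2$), because there $\|u^N_s\|_{\bbb1}\equiv 1$. But when $\bbp_0\neq\bbb1$, the term that actually comes out of Lemma \ref{lem:HN} is $G_\beta(t,s)\,\|b_s*\cU^N_s\|_{\boldsymbol\infty}\,\|u^N_s\|_{\bbp_0}$, and $\|u^N_s\|_{\bbp_0}$ cannot be bounded uniformly in $N$ --- it grows like $N^{\cA_{\bbb1,\bbp_0}\zeta}$. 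The paper handles this by writing $\|u^N_s\|_{\bbp_0}\le\|\cU^N_s\|_{\bbp_0}+\|u_s\|_{\bbp_0}$, which injects a \emph{quadratic} term $\|\cU^N_s\|_{\mS^\beta_s(b)}^2$ into the Volterra inequality. A quadratic Gronwall inequality of this type has no unconditional closure; the paper's Step~2 instead runs a bootstrap: introduce $F^N_t$ and $\cT_N$, show $F^N_t\le(c_0\cT_N)^2+(c_1F^N_t)^2$, restrict to the event $\Omega_0=\{c_0\cT_N<(2c_1)^{-1}\}$ on which this quadratic can be inverted, control $\mP(\Omega_0^c)$ by Chebyshev using the deterministic a priori bound $\|\cU^N_t\|_{\mS^\beta_t(b)}\lesssim N^{\theta_0\zeta}$ (cf.\ \eqref{BX1}), and then choose the moment order $n$ large enough that the leftover factor $N^{\theta_0\zeta}$ is absorbed by $(\cdots)^{n/m}$. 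None of this appears in your plan, and without it the argument does not close for $\bbp_0\neq\bbb1$ — which is the only admissible case when $\alpha\in(1,2)$. This is also where the second upper bound $\beta<(\alpha+\beta_0-\Lambda)/2$ actually comes from (via the quadratic exponent in the integrand), not from the stochastic convolution as you suggest.

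A secondary gap: in the case $\bbp_0=\bbb1$, $\alpha=2$, you invoke ``BDG combined with UMD--space estimates'', but $\mL^{\bbb1}$ is not a UMD space. The paper's Theorem \ref{08:thm01} has to introduce a polynomial weight $\omega_\ell$ and apply the UMD--BDG inequality in $\mL^{\bbp\vee\bb2}$, which in turn requires uniform moment bounds $\sup_{t\le T_0}\mE[|Z^{N,1}_t|_\bba^{\ell m}]<\infty$. These moments do not come for free --- they are established by Lemma \ref{08:29} through a separate Duhamel/Gronwall argument on $\|b_t*u^N_t\|_{\mL^{\boldsymbol\infty}}$. Your proposal should address how these a priori moments of the interacting particle system are obtained, since they feed directly into the martingale bound.
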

The rest of the section will be dedicated to the proof of Theorem \ref{main01}. 

\subsection{Technical preliminaries on the anisotropic Besov spaces}\label{SEC_BESOV_BIS}
The following Bernstein inequality {\black for the anisotropic block operator \eqref{Ph0}} is proven in \cite{ZZ21}. 
\bl[Bernstein's inequality]\label{BI00}
For any $\bbk:=(k_1,k_2)\in{\mN^2}$ and $\bbp, \bbp_1\in[1,\infty]^2$ with $\textcolor{Dgreen}{\bbp_1}\leq \textcolor{Dgreen}{\bbp}$, there is a constant $C=C(d,\bbk,\bbp,\bbp_1)>0$ such that, for all $j\geq 0$, \textcolor{Dgreen}{$f=f(x,v)\in\cS'$}\textcolor{black}{,}
\begin{align}\label{Ber}
	\|\nabla^{k_1}_x\nabla^{k_2}_v\cR^\bba_j f\|_{\textcolor{Dgreen}{\bbp}}\le C  2^{j \bba\cdot(\bbk+\frac{d}{\textcolor{Dgreen}{\bbp_1}}-\frac{d}{\textcolor{Dgreen}{\bbp}})}\|\cR^\bba_j  f\|_{\textcolor{Dgreen}{\bbp_1}}\textcolor{Dgreen}{=2^{j \big(\textcolor{black}{(1+\alpha) k_1}+k_2+\cA_{\bbp_1,\bbp}\big)}\|\cR^\bba_j  f\|_{\bbp_1},}
\end{align}
\textcolor{Dgreen}{for $\nabla^{k_1}_{x}$ and $\nabla^{k_2}_v$ denoting respectively the $k_1^{\rm th}$ and $k_2^{\rm th}$ differential operators with respect to the variable $x$ and the variable $v$.}
\el
{\color{black}Given the above, we can derive the following bound:
\bc\label{LiftEstimate}
For any $\bbk:=(k_1,k_2)\in\mN^2$, $s\in\mR$, $q\in[1,\infty]$ and $\bbp\in[1,\infty]^2$, there is a constant $C=C(d,\bbk,\bbp)>0$ such that for all $f\in \mathbf{B}^{s+\bba\cdot \bbk,q}_{\bbp;\bba}$,
\begin{align}\label{BerCor}
	\|\nabla^{k_1}_x\nabla^{k_2}_v f\|_{\mathbf{B}^{s,q}_{\bbp;\bba}}\le C \|f\|_{\mathbf{B}^{s+\bba\cdot \bbk,q}_{\bbp;\bba}}.
\end{align}
\ec
\begin{proof}
    Recalling the definition of $\mathbf{B}^{s,q}_{\bbp;\bba}$ (Definition \ref{bs}) and using \eqref{Ber}, we have
    \begin{align*}
       \|\nabla^{k_1}_x\nabla^{k_2}_v f\|_{\mathbf{B}^{s,q}_{\bbp;\bba}}&= \left(\sum_{j\geq0}\big(2^{ js}\|\nabla^{k_1}_x\nabla^{k_2}_v\cR^\bba_{j}f\|_{\bbp}\big)^q\right)^{1/q}\\
       &\lesssim \left(\sum_{j\geq0}\big(2^{ j(s+\bba\cdot\bbk)}\|\cR^\bba_{j}f\|_{\bbp}\big)^q\right)^{1/q}=\|f\|_{\mathbf{B}^{s+\bba\cdot \bbk,q}_{\bbp;\bba}}.
    \end{align*}
\end{proof}
}

For a function $f:\mR^{2d}\to\mR$,  the first-order difference operator is defined by
\begin{align}\label{Dif1}
\delta^{(1)}_hf(z):=\delta_hf(z):=f(z+h)-f(z),\ \ z, h\in\mR^{2d}.
\end{align}
For $M\in\mN$, the $M^{\rm th}$-order difference operator  is defined recursively by
$$
\delta^{(M+1)}_hf(z)=\delta_h\circ\delta^{(M)}_hf(z).
$$
The following characterization is {\black from  \cite[Theorem 2.7]{HZZZ22} (see also \cite{ZZ21})}.
\bp\label{prop:AniBesov}
For $s>0$, $q\in[1,\infty]$ and $\bbp\in[1,\infty]^2$, an equivalent norm of $\bB^{s,q}_{\bbp;\bba}$ is given by
\begin{align}\label{CH1}
	\|f\|_{\bB^{s,q}_{\bbp;\bba}}\asymp 
	\|f\|_{\bbp}+\left(\int_{|h|_\bba\le 1}\left(\frac{\|\delta^{([s]+1)}_hf\|_{\bbp}}{|h|^s_\bba}\right)^q\frac{ d  h}{|h|^{(2+\nalpha)d}_\bba}\right)^{1/q},
\end{align}
where $[s]$ is the 
integer part of $s$. In particular, $\bC^s_{\bba}:=\mathbf{B}^{s,\infty}_{\textcolor{Dgreen}{\boldsymbol{\infty}};\bba}$ is the anisotropic 
H\"older-Zygmund space, and for $s\in(0,1)$, there is a constant $C=C(\nalpha,d,s)>0$ such that
$$
{\color{black}C^{-1}}\|f\|_{{\textcolor{Dgreen}{\boldsymbol{\infty}}}}+{\color{black}C^{-1}}\sup_{z\not= z'}|f(z)-f(z')|/|z-z'|^{s}_\bba\le \|f\|_{\bC^s_{\bba}}{\color{black}\le C}\|f\|_{{\textcolor{Dgreen}{\boldsymbol{\infty}}}}+{\color{black}C}\sup_{z\not= z'}|f(z)-f(z')|/|z-z'|^{s}_\bba.
$$
\ep
\textcolor{Dgreen}{The above characterization will be notably used, in the appendix section \ref{sec:WeightedEstimates}, for establishing  weighted versions of key estimates in this Section including the Bernstein inequality and Lemma \ref{lemB1} below.}

We recall the following lemma proved in \cite[Lemma 2.6]{HRZ23} \textcolor{Dgreen}{on embeddings and Young convolution inequalities related to $\bB^{s,q}_{\bbp;\bba}$}\comm{JF 13/04: Later in the paper, we mention \eqref{Con} as a Young inequality; a small mention of this naming here may be useful for the reader}.
\bl\label{lemB1}
\begin{enumerate}[(i)]
	\item For any $\bbp\in[1,\infty]^2$,  $s'>s$ and $q\in[1,\infty]$, it holds that
	\begin{align}\label{AB2}
		\bB^{0,1}_{\bbp;\bba}\hookrightarrow\mL^\bbp\hookrightarrow\bB^{0,\infty}_{\bbp;\bba},\ \ \bB^{s',\infty}_{\bbp;\bba}\hookrightarrow \bB^{s,1}_{\bbp;\bba}\hookrightarrow \bB^{s,q}_{\bbp;\bba}.
	\end{align}
	\item For any  $\beta,\beta_1,\beta_2\in\mR$, $q,q_1,q_2\in[1,\infty]$ and $\bbp,\bbp_1,\bbp_2\in[1,\infty]^2$ with
	$$
	\beta=\beta_1+\beta_2,\ \ 1+\tfrac{1}{\bbp}=\tfrac1{\bbp_1}+\tfrac1{\bbp_2},\ \ \tfrac{1}{q}=\tfrac1{q_1}+\tfrac1{q_2},
	$$
	it holds that, for some universal constant $C>0$,
	\begin{align}\label{Con}
		\|f*g\|_{\bB^{\beta,q}_{\bbp;\bba}}\leq C\|f\|_{\bB^{\beta_1,q_1}_{\bbp_1;\bba}}\|g\|_{\bB^{\beta_2,q_2}_{\bbp_2;\bba}}.
	\end{align}
	
	\item For $\bbb1\leq\bbp_1\leq\bbp\leq\infty$, $q\in[1,\infty]$ and $s=s_1+\bba\cdot\big(\tfrac{d}{\bbp}-\tfrac{d}{\bbp_1}\big)$, there is a $C>0$ such that
	\begin{align}\label{Sob1}
		\|f\|_{\bB^{s,q}_{\bbp;\bba}}\le C\|f\|_{\bB^{s_1,q}_{\bbp_1;\bba}}.
	\end{align}
\end{enumerate}
\el

Now we recall the basic estimates for the kinetic semi-group.
Let $p_t(z)$ be the distributional density of $Z_t:=(\int_0^t\Levy^\alpha_s d  s,\Levy^{\alpha}_t)$. 
By 
scaling, it is easy to see that
\begin{align}\label{SCL1}
	p_t(z)=t^{-(1+\frac2{\alpha})d}p_1(
	t^{-\frac{1+\alpha}{\alpha}}x, t^{-\frac{1}{\alpha}}v),\ \ t>0.
\end{align}
The kinetic semigroup of operator $\Delta^{\frac{\nalpha}{2}}_v-v\cdot\nabla_x$ is given by
\begin{align}\label{SCL11}
	P_t f(z):=\mE \textcolor{Dgreen}{\big[}f(\transOp_t z+Z_t) \textcolor{Dgreen}{\big]}=\transOp_t(p_t*f)(z)=(\transOp_tp_t*\transOp_tf)(z)=\int_{\mR^{{\color{MColor}2}d}} \transOp_t{p_t}(z-z') \transOp_t{f}(z') d  z',
\end{align}
where 
$$
\transOp_tf(z):=f(\transOp_tz)=f(x-tv,v).
$$

The following estimates are proven in \cite[Lemma 2.12]{HRZ23}.
\bl\label{SemigroupEstimate}
Let $\bbp,\bbp_1\in[1,\infty]^2$ with $\bbp_1\le \bbp$. 
For any $\beta\in\mR$ and $\gamma\geq 0$, there is a constant $C=C(\nalpha, d,\textcolor{Dgreen}{\bbp,\bbp_1,\beta,\gamma})>0$ 
such that for any $f\in \bB^{\beta}_{\bbp;\bba}$ and all  $j\ge 0$,\textcolor{Dgreen}{{ and $t>0$}},
\begin{align}
	\|\cR^\bba_jP_tf\|_{\textcolor{Dgreen}{\bbp}}
	\le C  2^{j(\cA_{\textcolor{Dgreen}{\bbp_1,\bbp}}-\beta)}
	((2^{j\alpha}t)^{-\gamma}\wedge 1)\| f\|_{\bB^{\beta}_{\textcolor{Dgreen}{\bbp_1};\bba}}.\label{AD0306}
\end{align}
\el

The following lemma provides useful estimates about the kinetic semigroup $P_t$ (see \cite[Lemma 2.16]{HRZ23})\comm{JF and S. 22/03: To not create confusion with the coefficients related to $\mu_0$, $\beta_0\rightarrow \beta$ and $\bbp_0\rightarrow \bbp$. + Harmonization of the order of the dependency in $C$ in the two lemmas}.
\bl\label{Le215}
Let $\textcolor{Dgreen}{\bbp},\bbp_1\in[1,\infty]^2$ \textcolor{Dgreen}{with $\bbp_1\le \bbp$}. 
For any $\beta,\beta_1\in\mR$, there is a  $C_0=C_0(\nalpha,d,\textcolor{Dgreen}{\beta},\beta_1,\textcolor{Dgreen}{\bbp},\bbp_1)>0$ such that for all $t>0$,
\begin{align}\label{AD0446}
	\b1_{\{\textcolor{Dgreen}{\beta}\not=\beta_1-\cA_{\bbp_1,\textcolor{Dgreen}{\bbp}}\}}\|P_tf\|_{\bB^{\textcolor{Dgreen}{\beta},1}_{\textcolor{Dgreen}{\bbp};\bba}}+\|P_tf\|_{\bB^{\textcolor{Dgreen}{\beta}}_{\textcolor{Dgreen}{\bbp};\bba}}
	{\color{black}\le C_0}(1\wedge t)^{-\frac{(\textcolor{Dgreen}{\beta}-\beta_1+\cA_{\bbp_1,\textcolor{Dgreen}{\bbp}})\vee0}\nalpha}\|f\|_{\bB^{\beta_1}_{\bbp_1;\bba}}.
\end{align} 
\el
For {\black$f\in L^1(\mR_+;\bB^{\beta}_{\bbinfty;\bba})$ (with $\beta\in\mathbb R$ arbitrary)} and $t>0$, let\comm{JF 16/03: Although we denote below the density of $\mu_t$ by $u_t(x,v)$, the notation $u_t(x,v)$ and $u(t,x,v)$ are too close - and eventually, $u(t,x,v)=\sI_t(f)(x,v)$ is local $\Rightarrow$ we do not need it.}
$$
\textcolor{Dgreen}{
\sI_t^f}(x,v):=\int^t_0 P_{t-s}f(s,x,v) d  s.
$$
{\black By \eqref{AD0446}, whenever $f\in L^1(\mR_+;\bB^{\beta}_{\bbinfty;\bba})$ for some $\beta\in\mR$, the integral $\int_0^t P_{t-s}f(s,\cdot)ds$ is well defined in $ \bB^{\beta}_{\bbinfty;\bba}$ and satisfies
$$
\left\|\int_0^tP_{t-s}f(s,\cdot)ds\right\|_{\bB^{\beta}_{\bbinfty;\bba}}\lesssim \int_0^t\|f(s,\cdot)\|_{\bB^{\beta}_{\bbinfty;\bba}}ds.
$$
}
By the definition \eqref{SCL11} of $P_t$, it is easy to see that in the distributional sense,
$$
\p_t \textcolor{Dgreen}{
\sI_t^f}=\Delta^{\frac{\nalpha}{2}}_v\textcolor{Dgreen}{
\sI_t^f}-v\cdot\nabla_x \textcolor{Dgreen}{
\sI_t^f}+f.
$$

\subsection{Proof of Theorem \ref{main01}}\label{sec:2.2}
For \textcolor{Dgreen}{notation simplicity}, we shall write
$$
    \cU^N_t(z):=u_t(z)-u^N_t(z),
$$
and, for any smooth function $f:\mR^{2d}\to \mR$ and all  $z_0=(x_0,v_0)\in\mR^d\times\mR^d$, \textcolor{Dgreen}{we shall denote respectively by $\nabla_{x}f(z_0)$ and $\nabla_{v}f(z_0)$ the gradients of $f$ w.r.t. the $x$ and $v$ variable, evaluated \textcolor{black}{at} point $z_0$.}
To prove \eqref{S3:main01}, we shall\comm{JF 20/03: We also must write explicitly the Duhamel formula for $u_t$. Updated on 27/03 - The ref. to \cite{HRZ23} is here rather evasive, but on purpose: Duhamel representation is only mentioned in remarks or proof arguments - see Remark 1.1 in \cite{HRZ23}. The link between "wellposedness of a smooth weak solution to \eqref{S1:FPE}" and its Duhamel formulation is seemingly never addressed - at least not as clearly as in \cite{CdRJM-22}.} \textcolor{Dgreen}{ exploit the Duhamel formula related to the evolution of  $\cU^N_t$. For a.e. $t\in(0,T_0]$, the density $u_t$ \textcolor{black}{satisfies}
	$$
	\partial_tu_t=\big(\triangle^{\frac{\alpha}{2}}_v-v\cdot\nabla_x\big)u_t-\text{div}_v\big((b_t*u_t)u_t\big),
	$$
	for $\text{div}_v$ denoting the divergence operator on the variable $v$. The Duhamel formula for $u_t$  (see e.g. \cite[Section 1]{HRZ23}) \textcolor{black}{then gives}:
$$
u_t(z)=P_t{\color{black}\mu_0}(z)-\int_0^t P_{t-s}(\text{div}_v{\color{black}(}(b_s*u_s)u_s){\color{black})}(z)\, d  s, \ \ t\in[0,T_0].
$$
}
We next look at {\color{black}the} evolution equation satisfied by $\cU^N_t(z)$.
For any fixed $z_0\in\mR^{2d}$, applying It\^o's formula to $t\to(\transOp_t\phi_N)(Z^{N,i}_t-z_0)$, we have
\begin{align*}
 d (\transOp_t\phi_N)(Z^{N,i}_t-z_0)
&=\big(\p_t\transOp_t\phi_N+\Delta^{\frac{\nalpha}{2}}_{v}\transOp_t\phi_N\big)(Z^{N,i}_t-z_0) d  t+V^{N,i}_t\cdot(\nabla_{x}\transOp_t\phi_N)(Z^{N,i}_t-z_0) d  t\\
&\quad+(b^N_t*\mu^N_t)(Z^{N,i}_t)\cdot(\nabla_{v}\transOp_t\phi_N)(Z^{N,i}_t-z_0) d  t+ d  M_t^{N,i}(z_0),
\end{align*}
where
\begin{equation}\label{MART_FIELD_I}
M_t^{N,i}(z_0):=\left\{
\begin{aligned}
&\textcolor{Dgreen}{\sqrt{2}}	\int_0^t(\nabla_{v}\transOp_s\phi_N)(Z^{N,i}_s-z_0) d  W^i_s, \qquad \qquad\qquad \quad &\alpha=2,\\
&\int_0^t \int_{\textcolor{Dgreen}{\mR^d\setminus\{0\}}} \delta^{(1)}_{(0,v)}(\transOp_s\phi_N)(Z^{N,i}_s-z_0){\tilde {\mathcal N}}^i( d  s, d  v),\ \quad&\alpha\in (1,2),
\end{aligned}
\right.
\end{equation}
with  $\delta^{(1)}_{(0,v)}$ being \textcolor{Dgreen}{the (forward) difference operator on the variable $v$} defined in \eqref{Dif1}, \textcolor{Dgreen}{- i.e. $\delta^{(1)}_{(0,v)}f(z)=f(z+(0,v))-f(z)$}\comm{JF 13/04: Additional detail for the sake of the reading.} -
 and $ {\tilde {\mathcal N}}^i$ standing 
for the compensated Poisson measure associated with $ L^{\alpha,i}$, \textcolor{Dgreen}{the} driving noise of the $i^{\rm th} $ particle of the system \eqref{S1:00}.

 Noting that for any smooth function $f:\mR^{2d}\to \mR$,
\begin{align*}
\p_t \Gamma_t f(z_0)=-v_0\cdot\nabla_x\Gamma_tf(z_0),\quad \forall z_0=(x_0,v_0)\in\mR^d\times\mR^d,
\end{align*}
we have
\begin{align*}
(\p_t\transOp_t\phi_N)(Z^{N,i}_t-z_0)
+V^{N,i}_t\cdot (\nabla_x\transOp_t\phi_N)(Z^{N,i}_t-z_0)&=v_0\cdot\nabla_{\textcolor{Dgreen}{x}}\transOp_t\phi_N(Z^{N,i}_t-z_0)\\
&=-v_0\cdot\nabla_{\textcolor{Dgreen}{x}}\textcolor{Dgreen}{\big(}\transOp_t\phi_N(Z^{N,i}_t-\cdot)\textcolor{Dgreen}{\big)}(z_0),
\end{align*}
\textcolor{Dgreen}{denoting by $\nabla_x(f(Z^{N,i}_t-\cdot))(z_0)$ the $x$-derivative of $z\mapsto f(Z^{N,i}_t-\cdot)$ evaluated in $z_0$, }and 
$$
\Delta^{\frac{\nalpha}{2}}_vu_t^N(z_0)=\Delta^{\frac{\alpha}{2}}_v(\transOp_t\phi_N*\mu^N_t)(z_0)=(\Delta_v^{\frac \alpha 2}\transOp_t\phi_N*\mu^N_t)(z_0).
$$
Thus\textcolor{Dgreen}{,} by the definition of $u^N_t(z)$ in \eqref{S1:02} \textcolor{Dgreen}{and for $b^N_t$ as in \eqref{MollifiedInteraction}}, we have for all $z_0=(x_0,v_0)\in\mR^{2d}$,
\begin{align}
 d  u^N_t(z_0)&=\big((\Delta^{\frac{\nalpha}{2}}_v-v_0\cdot\nabla_x)u^N_t(z_0)-\<(b^N_t*\mu^N_t)\cdot(\nabla_v\transOp_t\phi_N)(\cdot-z_0),\mu^N_t(\cdot)\>\big) d  t+ d  M^N_t(z_0)\no\\
&=\left((\Delta^{\frac{\nalpha}{2}}_v-v_0\cdot\nabla_x) u^N_t(z_0)-\div_vG^N_t(z_0)\right) d  t+ d  M^N_t(z_0),\label{S1:P2}
\end{align}
where
\begin{align}\label{MMN}
M^{N}_t(z_0):=\frac1N\sum_{i=1}^N M^{N,i}_t(z_0),
\end{align}
with $M^{N,i}_t(z_0)$ being defined in \eqref{MART_FIELD_I}. As $b^N_t*\mu^N_t=b_t*u^N_t$,  
\begin{align}\label{DEF_G}
G^N_t(z_0):=\<(b^N_t*\mu^N_t)(\cdot)(\transOp_t\phi_N)(\cdot-z_0),\mu^N_t\>=\<(b_t*u^N_t)(\cdot)(\transOp_t\phi_N)(\cdot-z_0),\mu^N_t\>.
\end{align}
Combining \eqref{S1:P2} with \eqref{S1:FPE} and changing $z_0$ to $z$, we obtain that $\cU^N_t$ solves the SPDE
\begin{equation}\label{DIFF_SPDE}
 d  \cU^N_t=\left((\Delta^{\frac{\nalpha}{2}}_v-v\cdot\nabla_x)\cU^N_t-\div_v H^N_t\right) d  t- d  M^N_t,
\end{equation}
where  
\begin{align}\label{DEF_H}
H^N_t:=(b_t*u_t)u_t-G^N_t.
\end{align}
In particular, we derive from \eqref{DIFF_SPDE} {the} Duhamel formula,
\begin{align}\label{Duh}
\begin{split}
\cU^N_t&=P_t(u_0-u^N_0)\textcolor{Dgreen}{-}\int_0^t P_{t-s}\div_v H^{N}_s d  s-\int_0^t P_{t-s} d  M^N_s=: \cI^N_t\textcolor{Dgreen}{-}\cH^{N}_t-\cM^N_t\textcolor{Dgreen}{,}
        \end{split}
\end{align}
\textcolor{Dgreen}{\textcolor{black}{where $\cM_t^{N}=\frac 1N\sum_{i=1}^N \cM_t^{N,i}$} and 
\begin{equation}\label{MART_FIELD_II}
	\cM_t^{N,i}(z_0):=\left\{
	\begin{aligned}
		&\textcolor{Dgreen}{\sqrt{2}}	\int_0^tP_{t-s}(\nabla_{v}\transOp_s\phi_N)(Z^{N,i}_s-z_0) d  W^i_s, \qquad \qquad\qquad \quad &\alpha=2,\\
		&\int_0^t \int_{\textcolor{Dgreen}{\mR^d\setminus\{0\}}} P_{t-s}\delta^{(1)}_{(0,v)}(\transOp_s\phi_N)(Z^{N,i}_s-z_0){\tilde {\mathcal N}}^i( d  s, d  v),\ \quad&\alpha\in (1,2).
	\end{aligned}
	\right.
\end{equation}
 }
Applying the norm $\|\cdot\|_{\mS^\beta_t(b)}$ \textcolor{Dgreen}{set in \eqref{BBT}} to the above, it follows that  
\begin{align}\label{PP1}
	\|\cU^N_t\|_{\mS^\beta_t(b)}\leq  \|\cI^N_t\|_{\mS^\beta_t(b)}+\|\cH^N_t\|_{\mS^\beta_t(b)}+\|\cM^N_t\|_{\mS^\beta_t(b)}.
\end{align}
\textcolor{Dgreen}{Now, w}e first look at the estimate of $\|\cI^N_t\|_{\mS^\beta_t(b)}$. 
\bl\label{098}
For any $\beta>0$, there is a  constant $C=C\textcolor{Dgreen}{(\Theta,\beta)}>0$
such that, for all \textcolor{Dgreen}{$N\ge 1$
},
\begin{equation}\label{Init-1}
	\sup_{t>0}\|\cI^N_t\|_{\mS^\beta_t(b)}{\color{black}\le} {\color{black}C}N^{-\beta\zeta}\|u_0\|_{\bB^{\beta_0}_{\bbp_0;\bba}}+{\color{black}C}
	\|\phi_N*(\mu^N_0-u_0)\|_{\bB^{\beta_0-\beta}_{\bbp_0;\bba}},
\end{equation}
where $\zeta>0$ is from the definition \textcolor{Dgreen}{\eqref{PHN}} of $\phi_N$.
\el{\ZXC
\begin{proof}
Recalling that  $\beta-\beta_0>0 $, by  \textcolor{Dgreen}{the heat kernel estimate} \eqref{AD0446} \textcolor{Dgreen}{in Lemma \ref{Le215}},
we have 	
$$
	\|\cI^{N}_t\|_{\bB^{0,1}_{\bbp_0;\bba}}\lesssim (1\wedge t)^{-\frac{\beta-\beta_0}{\alpha}}\|\phi_N*\mu^N_0- u_0\|_{\bB^{\beta_0-\beta}_{\bbp_0;\bba}}, 
	$$
	and recalling \textcolor{Dgreen}{the condition $\bbp_0\le \bbp_b'$ in ${\bf (H)}$,} the definition \textcolor{Dgreen}{\eqref{DEF_GAP} } of $\Lambda$, and \textcolor{Dgreen}{that} $\beta>0, \Lambda \textcolor{Dgreen}{>} 0$, \textcolor{Dgreen}{\eqref{AD0446} also yields}
	\begin{align*}
	\|\cI^{N}_t\|_{\bB^{-\beta_b,1}_{\bbp_b';\bba}}
	\lesssim(1\wedge t)^{-\frac{\beta+\Gap}{\alpha}}\|\phi_N*\mu^N_0- u_0\|_{\bB^{\beta_0-\beta}_{\bbp_0;\bba}}.
\end{align*}
	Hence, \textcolor{Dgreen}{as the convolution inequality} \eqref{Con} \textcolor{Dgreen}{ensures that $\|b_t*\cI^N_t\|_{\boldsymbol{\infty}}\lesssim \|b_t\|_{\bB^{\beta_b}_{\bbp_b;\bba}}\|\cI^N_t\|_{\bB^{-\beta_b,1}_{\bbp_b';\bba}}$},
\begin{align}
	\|\cI^N_t\|_{\mS^\beta_t(b)}&\lesssim  (1\wedge t)^{\frac{\beta-\beta_0}{\alpha}}
	\|\cI^{N}_t\|_{\bB^{0,1}_{\bbp_0;\bba}}+(1\wedge t)^{\frac{\beta+\Gap}{\alpha}}\|b_t\|_{ \bB^{\beta_b}_{\bbp_b\textcolor{Dgreen}{;\bba}}}	\|\cI^{N}_t\|_{\bB^{-\beta_b,1}_{\bbp'_b;\bba}}\no\\
	&\lesssim \|\phi_N*\mu^N_0- u_0\|_{\bB^{\beta_0-\beta}_{\bbp_0;\bba}}\leq \|\phi_N* u_0- u_0\|_{\bB^{\beta_0-\beta
	}_{\bbp_0;\bba}	}
	+\|\phi_N*(\mu^N_0-u_0)\|_{\bB^{\beta_0-\beta
	}_{\bbp_0;\bba}}.\label{SF9}
\end{align}
Note that \textcolor{Dgreen}{a duality argument and the Besov equivalence }\eqref{CH1} 
yields, \textcolor{Dgreen}{for all $z\in\mathbb R^d$,}   
$$
\|u_0(\cdot-z)- u_0(\cdot)\|_{\bB^{\beta_0-\beta}_{\bbp_0;\bba}}\lesssim |z|_\bba^\beta\|u_0\|_{\bB^{\beta_0}_{\bbp_0;\bba}}.
$$
\textcolor{black}{We thus have}, according to the scaling estimate \eqref{Es1} \textcolor{Dgreen}{for $\phi_N$ stated} in \textcolor{black}{Section \textcolor{Dgreen}{\ref{APP_BESOV}} of the appendix,} 
$$
	\|\phi_N* u_0- u_0\|_{\bB^{\beta_0-\beta
	}_{\bbp_0;\bba}}
	\lesssim \||\cdot|_\bba^\beta\phi_N\|_{\textcolor{Dgreen}{\bbb1}}\|u_0\|_{\bB^{\beta_0}_{\bbp_0;\bba}}=N^{-\beta\modulateorder}\||\cdot|_\bba^\beta\phi_{1}\|_{\textcolor{Dgreen}{\bbb1}}\|u_0\|_{\bB^{\beta_0}_{\bbp_0;\bba}}.
$$
\textcolor{black}{Plugging  this inequality into \eqref{SF9} we obtain \eqref{Init-1}}.
\end{proof}
}

\textcolor{black}{The estimates} for \textcolor{Dgreen}{handling the components} $\|\cH^{N}_t\|_{\mS^\beta_t(b)}$ and \textcolor{black}{the} moments of $\|\cM^N_t\|_{\mS^\beta_t(b)}$ in \eqref{PP1} are stated in the two next lemmas. 
The first one exhibits a term which goes to zero with a rate depending on the smoothness of the underlying Fokker-Planck equation and an integral contribution writing  essentially in terms of $\cU^N_t$ \textcolor{Dgreen}{and $\|u^N_t\|_{\bbp_0}$}, i.e. these terms will enter in a Gronwall or {\black absorption type} argument. The second lemma, associated with the stochastic integral, essentially  produces a convergence rate close to the one of the corresponding stable limit theorem. 
Proofs are postponed to the next Section \ref{sec:ProofMain}.

\bl\label{lem:HN} (Estimate for $\cH^{N}_t$) For any $\beta\in\big(0,{\ZXC (\alpha+\beta_0-\Lambda)\wedge 1})\big)$, there is a constant $C=C(\Theta\textcolor{Dgreen}{,\beta})>0$ such that, for all $t\in(0,T_{\textcolor{Dgreen}0}]$ and \textcolor{Dgreen}{$N\ge 1$},
\begin{align}\label{AZ11}
	\begin{split}
		\|\cH^{N}_t\|_{\mS^{\beta}_{t}(b)}
		{\color{black}\le} {\color{black}C} N^{{\ZXC -\beta}\modulateorder}+{\color{black}C}\int^t_0
		G_\beta(t,s)\Big(\|b_s*\cU^N_s\|_{\textcolor{Dgreen}{\boldsymbol{\infty}}}\|u^N_s\|_{\bbp_0} +s^{-\frac{\Gap}{\alpha}
		}\|\cU^N_s\|_{\bbp_0}\Big) d  s,
	\end{split}
\end{align}
where
\begin{align}\label{AZ12}
	G_\beta(t,s):=t^{\frac{\beta-\beta_0}{\alpha}
	}(t-s)^{-\frac1{\alpha}}+t^{\frac{\beta+\Gap}{\alpha}
	}(t-s)^{-\frac{\Gap+\beta_0+1}{\alpha}
	}.
\end{align}
\el 

\bl[Estimate for $\cM^{N}$]\label{lem:MN} 
\ZXC 
Under the assumptions of Theorem \ref{main01}, 
for any $\eps>0$, $m\in\mathbb N$ and {\color{black}$\zeta>0$} 
with the following upper bound
\begin{align}\label{Ze1}
\zeta<(1-\mstable_\alpha)/\theta_\alpha,
\end{align}
there is a constant $C=C(\textcolor{Dgreen}{\varepsilon,\text{
}}m,\beta,\zeta)>0$ such that for all $N\in\mN$,
\begin{align}\label{0207:000}
	\sup_{t\in[0,T_0]}\left\|\cM^N_t\right\|_{L^m(\Omega;\mS^{\beta}_t(b))} 
	\le C  N^{\mstable_\alpha-1+\zeta\theta_\alpha+\eps}.
\end{align}
\el
With the above \textcolor{Dgreen}{preparatory results} we are \textcolor{Dgreen}{now able} to demonstrate Theorem \ref{main01}.
\begin{proof}[Proof of Theorem \textcolor{MColor}{\ref{main01}}] \textcolor{Dgreen}{Applying}  Lemmas \ref{098} and \ref{lem:HN} \textcolor{Dgreen}{to \eqref{PP1}}, 
\label{THE_PROOF_OF_THM_MAIN}
we have
\begin{align}\label{AZ8}
\|\cU^N_t\|_{\mS^\beta_t(b)}\lesssim & \textcolor{Dgreen}{I^N_0}
+\|\cM^N_t\|_{\mS^\beta_t(b)}+\int^t_0
G_\beta(t,s)\Big(\|b_s*\cU^N_s\|_{\infty}\|u^N_s\|_{\bbp_0} +s^{-\frac{\Gap}{\alpha}
}\|\cU^N_s\|_{\bbp_0}\Big) d  s,
\end{align}
where \textcolor{Dgreen}{$\beta\in\big(0,(\alpha+\beta_0-\Gap)\wedge 1\big)$,} $G_\beta(t,s)$ is defined in \eqref{AZ12} and\comm{JF and S. 22/03: Denoting this term $\cI^N_0$ creates a strong confusion with $\cI^N_t$ defined in \eqref{Duh}.}
\begin{align}\label{AZ38}
\ZXC \textcolor{Dgreen}{I^N_0}:=N^{-\beta\zeta}+\|\phi_N*(\mu^N_0-u_0)\|_{\bB^{\beta_0-\beta}_{\bbp_0;\bba}}.
\end{align}
It now remains to handle through a suitable {\black absorption type argument }
 the terms involving $\cU^N_s$ in the right hand side of \textcolor{Dgreen}{\eqref{AZ8}}, for which we need to make the $\|\cdot\|_{\mS_s^\beta(b)} $ norm appear, {\black which is defined in \eqref{BBT}}. In particular, the technical difficulty arises from the factor $\|u^N_s\|_{\bbp_0}$, which requires to distinguish the cases $\bbp_0=(1,1)$ and $\bbp_0\neq \bbb1$. For the former, only the case $\alpha=2$ is of interest, the pure-jump case requiring the property $\alpha<p_{x,0}\wedge p_{v,0}$.
We divide the proofs into three steps.\\

\noindent
{\ZXC \bf (Step 1: Case $\alpha=2$ and $\bbp_0=\boldsymbol{1}$)\textcolor{Dgreen}{.}}  For $\beta\in\big(0,(\alpha+\beta_0-\Lambda)\wedge 1)\textcolor{Dgreen}{\big)}$,   noting that
$$
\|u^N_s\|_{\bbp_0}=\|u^N_s\|_{\bbb1}=\|\mu^N_s*\transOp_s\phi_N\|_{\bbb1}=1,
$$
 we have, by the embeddings \eqref{AB2}{\black,} 
\begin{align}
	&\int^t_0
	G_\beta(t,s)\Big(\|b_s*\cU^N_s\|_{\textcolor{Dgreen}{{\boldsymbol{\infty}}}}\|u^N_s\|_{\bbp_0} +s^{-\frac{\Gap}{\alpha}}\|\cU^N_s\|_{\bbp_0}\Big) d  s\label{PP0}\\
	&\quad\lesssim \int^t_0
	G_\beta(t,s)\Big(\|b_s*\cU^N_s\|_{\textcolor{Dgreen}{{\boldsymbol{\infty}}}}+s^{-\frac{\Gap}{\alpha}}\|\cU^N_s\|_{\bB^{0,1}_{\bbp_0;\bba}}\Big) d  s\no\\
	&\quad\lesssim  \int^t_0
	G_\beta(t,s) \Big((1\wedge s)^{-\frac{\beta+\Gap}{\alpha}}+(1\wedge s)^{\frac{\beta_0-\beta-\Gap}{\alpha}}\Big)\|\cU^N_s\|_{\mS^\beta_s(b)} d  s,\no
\end{align}
recalling \eqref{BBT} for the last inequality.

		Hence, for any \textcolor{Dgreen}{$t\in[0,T_0]$ }
		and $m\in\mN$, by \eqref{AZ8} and $\beta_0\textcolor{Dgreen}{<} 0$,\comm{S. and JF 22/03: Leaving aside the change of notation, the former expression $\|\sI^N_0\|_{L^m(\Omega;\bB^{\beta_0-\beta}_{\bbp_0;\bba})}$ was confusing at the reading, since the norm $\bB^{\beta_0-\beta}_{\bbp_0;\bba}$ is already contained in \eqref{AZ38}.}
	\begin{align*}
		\|\cU^N_t\|_{L^m(\Omega\textcolor{Dgreen}{;}\mS^\beta_t(b))}
		&\lesssim \textcolor{Dgreen}{ \|I^N_0\|_{L^m(\Omega)}}
		+\|\cM^N_t\|_{L^m(\Omega\textcolor{Dgreen}{;}\mS^\beta_t(b))}+\int^t_0
		G_\beta(t,s) s^{\frac{\beta_0-\beta-\Gap}{\alpha}} \|\cU^N_s\|_{L^m(\Omega\textcolor{Dgreen}{;}\mS^\beta_s(b))} d  s.
	\end{align*}
	Since $\beta
	<\alpha-\Gap+\beta_0$ {\black(from \eqref{BetaRange}) and $\Lambda<\alpha-1$ (from \eqref{DEF_GAP}) in {\bf (H)}}, Gronwall's inequality of Volterra type stated in Lemma \ref{0214:lem02}{\black-(iii)} {\black apply (taking $g(t)=\|I^N_0\|_{L^m(\Omega)}+\|\cM^N_t\|_{L^m(\Omega\textcolor{Dgreen}{;}\mS^\beta_t(b))}$, $h=0$ and $\gamma=-\beta_0+\beta+\Lambda$ - which lies in the interval $(0,\alpha\wedge (\alpha-1+\beta-\beta_0))$), and} we get, for any $T\in(0,T_0]$,
	 \begin{align}\label{SF12}
		\sup_{t\in(0,T]}\|\cU^N_t\|_{L^m(\Omega\textcolor{Dgreen}{;}\mS^\beta_t(b))}
		\lesssim 
		\textcolor{Dgreen}{\|I^N_0\|_{L^m(\Omega)}}
		{\ZXC
		+\sup_{t\in(0,T]}\|\cM^N_t\|_{L^m(\Omega\textcolor{Dgreen}{;}\mS^\beta_t(b))}}.
		\end{align}
{\ZXC \bf (Step 2: Case $\alpha\in(1,2]$ and $\bbp_0\not=\boldsymbol{1}$)}. 
In this situation, {\color{black}contrary to the preceding case where we used $\|\mu^N_s\|_{\bB^0_{\bbb1;\bba}}=1$, a direct estimate of $\|u^N_s\|_{\bbp_0}$ carries now a significant cost as, owing to \eqref{Con} and the estimate \eqref{Es3}, 
$$
\|u^N_s\|_{\bbp_0}\lesssim\|\Gamma_s\phi_N\|_{\bB^{0,1}_{\bbp_0;\bba}}\|\mu^N_s\|_{\bB^0_{\bbb1;\bba}}\lesssim N^{\cA_{\bbb1,\bbp_0}}.
$$ }
To avoid this cost, we again make the difference $\cU^N_s$ appear, for $s\in(0,t]$. {\ZXC Using the control 
$\|u_s\|_{\bbp_0}\textcolor{Dgreen}{\lesssim \|u_s\|_{B^{0,1}_{\bbp_0;\bba}}} \lesssim s^{\frac{\beta_0}\alpha}$ deduced from \textcolor{Dgreen}{\eqref{AB2} and }\eqref{S2:01} and recalling the definition \eqref{BBT}}, we derive
\begin{align*}
	\|b_s*\cU^N_s\|_{\textcolor{Dgreen}{{\boldsymbol{\infty}}}}\|u^N_s\|_{\bbp_0}
	&\leq\|b_s*\cU^N_s\|_{\textcolor{Dgreen}{{\boldsymbol{\infty}}}}\|\cU^N_s\|_{\bbp_0}+\|b_s*\cU^N_s\|_{\textcolor{Dgreen}{{\boldsymbol{\infty}}}}\|u_s\|_{\bbp_0}\\
	&\leq s^{\frac{\beta_0-2\beta-\Gap}{\alpha}}\|\cU^N_s\|_{\mS^\beta_s(b)}^2+s^{\frac{\beta_0-
	\beta-\Gap}{\alpha}}
	\|\cU^N_s\|_{\mS^\beta_s(b)}.
\end{align*}
Substituting this \textcolor{Dgreen}{estimate} into  \eqref{AZ8}, we get, for any $0<t\leq \Tup$,\comm{JF 15/04: There was not a single reason to introduce this formulation except to gain some space. Hopefully the referee will understand that ...} 
\begin{align*}
	\|\cU^N_t\|_{\mS^\beta_t(b)}
	&\lesssim \textcolor{Dgreen}{I^N_0}
	+\|\cM^N_t\|_{\mS^\beta_t(b)}+\sum_{i=1}^2\int_0^t G_\beta(t,s)s^{\frac{\beta_0-i\beta-\Gap}{\alpha}} \|\cU^N_s\|^i_{\mS^\beta_s(b)} d  s.
\end{align*}
Note that by \eqref{BetaRange},
$$
\beta_0-2\beta-\Gap>-\alpha,\ \ \beta_0-2\beta-\Gap>-(\alpha-1-\beta_0+\beta).
$$
\textcolor{Dgreen}{Note also that, as $\frac{\alpha}2>\alpha-1$, $\beta<\frac{\alpha}2-\Gap$ and, as $\beta_0<0$, according to ${\bf (H)}$,  $\Gap<\alpha-1-\beta_0$. These bounds ensure:
$$
\sup_{t\in(0,\Tup]}\sum_{i=1}^2\int_0^t G_\beta(t,s)s^{\frac{\beta_0-i\beta-\Gap}{\alpha}} d  s\asymp \sup_{t\in(0,\Tup]}\sum_{i=1}^2\int_0^t \big((t-s)^{-\frac 1{\alpha}}+(t-s)^{-\frac{\Gap+\beta_0+1}{\alpha}}\big)s^{\frac{\beta_0-i\beta-\Gap}{\alpha}} d  s<\infty.
$$} 
{\ZXC Thus one can use \textcolor{Dgreen}{the Gronwall inequality \eqref{0214:03} stated in} Lemma \ref{0214:lem02}-\textcolor{Dgreen}{$(iii)$} to derive, for all $s\in(0,T_0]$,}
\begin{align}\label{QuadrControlUN}
	\|\cU^N_s\|_{\mS^\beta_s(b)}
	\lesssim& \,\,\textcolor{Dgreen}{I^N_0}
	+\|\cM^N_s\|_{\mS^\beta_s(b)}+\int_0^sG_\beta(s,r)r^{\frac{\beta_0-\beta-\Gap}{{\alpha}}}\|\cM^N_r\|_{\mS^\beta_r(b)} d  r\no\\
	&+\int_0^s G_\beta(s,r)r^{\frac{\beta_0-2\beta-\Lambda}{\alpha}}\|\cU^N_r\|_{\mS^\beta_r(b)}^2 d  r.
\end{align}
Define
$$
	 F^N_{t}:=\sup_{t'\in(0,t]}\int^{t'}_0G_\beta(t',s)s^{\frac{\beta_0-2\beta-\Gap}{{\alpha}}}\|\cU^N_s\|_{\mS^\beta_s(b)}^2 d  s
$$
and
	 \begin{align}\label{GrowthTerm}
	 \cT_N&:=\textcolor{Dgreen}{I^N_0}
	 +\Bigg(\sup_{t\in(0,T_0]}\int^t_0G_\beta(t,s)
	 {s}^{\frac{\beta_0-{\black 2}\beta-\Gap}{{\alpha}}}\|\cM^N_s\|_{\mS^\beta_s(b)}^2 d  s\Bigg)^{\frac 12}.
	 \end{align}
     {\black
    We note that by Jensen's inequality, 
\begin{align*}
& \int_0^{t'}G_\beta(t',s)s^{\frac{\beta_0-2\beta-\Lambda}{\alpha}}\left(\int_0^sG_\beta(s,r)r^{\frac{\beta_0-\beta-\Lambda}{\alpha}}\|\cM^N_r\|_{\mS^\beta_r(b)}dr\right)^2ds\\
&\lesssim \int_0^{t'}G_\beta(t',s)s^{\frac{\beta_0-2\beta-\Lambda}{\alpha}}\int_0^sG_\beta(s,r)r^{\frac{\beta_0-\beta-\Lambda}{\alpha}}\|\cM^N_r\|_{\mS^\beta_r(b)}^2drds\\
&\lesssim \cT_N^2\int_0^{t'}G_\beta(t',s)s^{\frac{\beta_0-2\beta-\Lambda}{\alpha}}ds\lesssim \cT_N^2.
\end{align*}
Then} taking the square and multiplying   $G_\beta(t',s){s^{\frac{\beta_0-2\beta-\Gap}{{\alpha}}}}$  on both sides of \eqref{QuadrControlUN},
 we obtain that for some $c_0,c_1>0$,
$$
	  F^N_t 
	 	\le (c_0 \cT_N)^2+(c_1F^N_t)^2,\ \ \forall 0\le t\le \Tup.
$$
	 	 In particular,  
$$
	 \big(c_1F^N_t-(2c_1)^{-1}\big)^2\geq (2c_1)^{-2}-(c_0\cT_N)^2.
$$
	 Thus, on the event $\Omega_0:=\left\{\omega\in\Omega\,:\, c_0\cT_N<(2c_1)^{-1}\right\}$,
	 \begin{equation}\label{ControlFN_2}
	 |c_1F^N_t-(2c_1)^{-1}|\geq \sqrt{(2c_1)^{-2}-(c_0\cT_N)^2}.
	\end{equation}
	\textcolor{Dgreen}{Observe now that, using successively the control \eqref{S2:01} on $u$, the control $\sup_N\|\mu^N_t\|_{\bB^0_{\bbb1;\bba}}<\infty$, \eqref{AB2} and finally applying the estimate \eqref{Es3} on $\|\transOp_{t}\phi_N\|_{\bB^\beta_{\bbp;\bba}}$ (taking $\ell=0$, $\lambda=N^{\modulateorder}$, $\ell=0$ and successively $\bbp=\bbp_0$ and $\bbp=\bbp_b'$), we have
	}
	 \begin{align}\label{BX1}
         \|\cU^N_t\|_{\mS^\beta_t(b)}&={\black(1\wedge t)}^{\frac{\beta-\beta_0}{\alpha}}
         	\|u_t-\Gamma_t\phi_N*\mu^N_t\|_{\bB^{0,1}_{\bbp_0;\bba}}+{\black(1\wedge t)}^{\frac{\Gap+\beta}{\alpha}} \|b_t*\big(u_t-\Gamma_t\phi_N*\mu^N_t\big)\|_{\boldsymbol{\infty}}\no\\
         &\stackrel{\textcolor{Dgreen}{\eqref{Con}}}{\lesssim} t^{\frac{\beta-\beta_0}{\alpha}}
	\|u_t-\Gamma_t\phi_N*\mu^N_t\|_{\bB^{0,1}_{\bbp_0;\bba}}+t^{\frac{\Gap+\beta}{\alpha}} \|u_t-\Gamma_t\phi_N*\mu^N_t\|_{\bB^{-\beta_b,1}_{\bbp'_b;\bba}}\no\\
	&\lesssim1+ \|\Gamma_t\phi_N*\mu^N_t\|_{\bB^{0,1}_{\bbp_0;\bba}}+ \|\Gamma_t\phi_N*\mu^N_t\|_{\bB^{-\beta_b,1}_{\bbp'_b;\bba}}\no\\
	&\lesssim 1+ \|\Gamma_t\phi_N\|_{\bB^{0,1}_{\bbp_0;\bba}}+ \|\Gamma_t\phi_N\|_{\bB^{-\beta_b,1}_{\bbp'_b;\bba}}\no\\
	&\stackrel{\eqref{AB2}}{\lesssim}1+ \|\Gamma_t\phi_N\|_{\bB^{\eps}_{\bbp_0;\bba}}+ \|\Gamma_t\phi_N\|_{\bB^{\eps-\beta_b}_{\bbp'_b;\bba}}\stackrel{\eqref{Es3}}{\lesssim} N^{\theta_0\zeta},
         \end{align}
         	 where $\eps>0$ and
	 $$
	 \theta_0=[\cA_{\bbb1,\bbp_0}+(1+\alpha)\eps]\vee [\cA_{\textcolor{black}{\bbp_b,\infty}}+(1+\alpha)(\eps-\beta_b)]\textcolor{Dgreen}{.}
	 $$
\textcolor{Dgreen}{Hence}, it is easy to see that $t\mapsto F^N_t$ is continuous and $\lim_{t\downarrow 0}F^N_t=0$.
Thus, by \eqref{ControlFN_2}, 
	 $$
	 F^N_t\leq  \Big((2c_1)^{-1}-\sqrt{(2c_1)^{-2}-(c_0\cT_N)^2}\Big)/c_1\leq c_2\cT_N,\ \ \forall t\in[0,T_0],\mbox{ on $\Omega_0$,}
	 $$
for  $c_2=c_0/c_1$.
	 Plugg\textcolor{Dgreen}{ing} this into \eqref{QuadrControlUN} \textcolor{Dgreen}{and recalling \eqref{GrowthTerm}}, the above yields
	 $$
	 \|\cU^N_t\|_{\mS^\beta_t(b)}{\ZXC \ind_{\Omega_0}}\lesssim 
	 \Big(\cT_N+\|\cM^N_t\|_{\mS^\beta_t(b)}\Big){\ZXC \ind_{\Omega_0}}.
	 $$
	 Therefore, for all $m,n\ge 1$, by \eqref{BX1} and Chebyshev's inequality,  we get 
	 \begin{align}\label{0215:04}
	 \sup_{t\in(0,T_0]}\mathbb E[\|\cU^N_t\|_{\mS^\beta_t(b)}^m]&{\ZXC\leq}\sup_{t\in(0,T_0]}
	 \mathbb E[\|\cU^N_t\|_{\mS^\beta_t(b)}^m{\ZXC \ind_{\Omega^c_0}}]+\sup_{t\in(0,T_0]}\mathbb E[\|\cU^N_t\|_{\mS^\beta_t(b)}^m{\ZXC \ind_{\Omega_0}}]\no\\
	 &\lesssim N^{m  \theta_0\zeta} \mP(\Omega^c_0)+\mathbb E[ \textcolor{Dgreen}{\big(}\cT_N\textcolor{Dgreen}{\big)}^m]+\sup_{t\in (0,T_0]}\E[\|\cM^N_t\|_{\mS^\beta_t(b)}^m]\no\\
&\leq  N^{m  \theta_0\zeta}(2c_1c_0)^n\mE [\textcolor{Dgreen}{\big(}\cT_N\textcolor{Dgreen}{\big)}^n]+\mathbb E[ \textcolor{Dgreen}{\big(}\cT_N\textcolor{Dgreen}{\big)}^m]+\sup_{t\in (0,T_0]}\E[\|\cM^N_t\|_{\mS^\beta_t(b)}^m].
	 \end{align}
We now turn to the estimat\textcolor{Dgreen}{ion of} $\mE [\textcolor{Dgreen}{\big(}\cT_N\textcolor{Dgreen}{\big)}^n]$. By {\black Lemma \ref{0214:lem02} (i)}, there is a $p$ large enough so that
$$
\int^t_0G_\beta(t,r)r^{\frac{\beta_0-{\black 2}\beta-\Gap}{{\alpha}}}\|\cM^N_r\|^{2}_{\mS^\beta_r\textcolor{Dgreen}{(b)}} d  r\lesssim 
\left(\int^t_0\|\cM^N_r\|^{2p}_{\mS^\beta_r\textcolor{Dgreen}{(b)}} d  r\right)^{1/p}.
$$
Thus, if we take $n\geq 2p$, then
	 \begin{align*}
	 \mathbb E [(\cT_N)^n]
	 &\lesssim \mE\textcolor{Dgreen}{\big[}(\textcolor{Dgreen}{I^N_0}
	 )^n\textcolor{Dgreen}{\big]}+\E\Bigg[\sup_{t\in (0,T_0]}\Bigg(\int^t_0G_\beta(t,r)r^{\frac{\beta_0-\beta-\Gap}{{\alpha}}}\|\cM^N_r\|^2_{\mS^\beta_r\textcolor{Dgreen}{(b)}} d  r\Bigg)^{\frac n2}\Bigg]\\
	 &\lesssim \mE\textcolor{Dgreen}{\big[}(\textcolor{Dgreen}{I^N_0}
	 )^n\textcolor{Dgreen}{\big]}+\E\left(\int^{T_0}_0\|\cM^N_r\|^n_{\mS^\beta_r\textcolor{Dgreen}{(b)}} d  r\right)
	 \lesssim \mE\textcolor{Dgreen}{\big[}(\textcolor{Dgreen}{I^N_0})^n
	 \textcolor{Dgreen}{\big]}
	 +\sup_{t\in (0,T_0]}\E\left[\|\cM^N_t\|_{\mS^\beta_t(b)}^n\right].
	 \end{align*}
Substituting this into \eqref{0215:04}, we obtain that for any $n,m\geq 2p$,
 \begin{align}\label{SF2}
 \begin{split}
\sup_{t\in(0,T_0]}\|\cU^N_t\|_{L^m(\Omega;\mS^\beta_t(b))}
&\lesssim N^{\theta_0\zeta}\left(\|\textcolor{Dgreen}{I^N_0}\|_{L^n(\Omega)}
+\sup_{t\in (0,T_0]}\|\cM^N_t\|_{L^n(\Omega;\mS^\beta_t(b))}\right)^{n/m}\\
&\quad+\|\textcolor{Dgreen}{I^N_0}\|_{L^m(\Omega)}+\sup_{t\in (0,T_0]}\|\cM^N_t\|_{L^m(\Omega;\mS^\beta_t(b))}.
\end{split}
\end{align}
{\ZXC {\bf (Step 3)\textcolor{Dgreen}{.}} \textcolor{Dgreen}{In view of \eqref{SF2} and, as the control of $\|\cM^N_t\|_{L^m(\Omega;\mS^\beta_t(b))}$ is already set by Lemma \ref{lem:MN}, it remains to estimate $\|\textcolor{Dgreen}{I^N_0}\|_{L^n(\Omega)}$.}
Recalling the definition \eqref{AZ38} \textcolor{Dgreen}{of $I^N_0$}, we have\textcolor{Dgreen}{, for any $n\in\mN$,}
 \begin{align}\label{SF3}
\|\textcolor{Dgreen}{I^N_0}\|_{L^n(\Omega)}
\lesssim N^{-\beta\zeta}+\|\phi_N*(\mu^N_0-u_0)\|_{L^n(\Omega;\bB^{\beta_0-\beta}_{\bbp_0;\bba})}.
\end{align}
\textcolor{Dgreen}{Now let us again consider separately the cases $\bbp_0>\boldsymbol{\alpha}$ and $\bbp_0=\bbb1$, $\alpha=2$. 
For the former, let}
 $q=p_{x,0}\wedge p_{v,0}\wedge 2$. By \eqref{Con1} \textcolor{Dgreen}{in Lemma \ref{THM_CI} 
 ,} with $\bbp=\bbp_0$ and $\beta_0-\beta<0$, we have
 \begin{align}\label{SF4}
\|\phi_N*(\mu^N_0-u_0)\|_{L^n(\Omega;\bB^{\beta_0-\beta}_{\bbp_0;\bba})}\lesssim N^{\frac1 q-1+\zeta\cA_{\bbb1,\bbp_0}}\leq N^{\mstable_\alpha-1+\zeta\theta_\alpha},
\end{align}
where $\mstable_\alpha$ and $\theta_\alpha$ are defined in \eqref{limitstable}.
For \textcolor{Dgreen}{the case} $(\alpha,\bbp_0)=(2,\bbb1)$, by \eqref{Con1} with $\bbp=\bb2$ and $\beta_0-\beta<0$ and by \eqref{SF1}, we have
 \begin{align}\label{SF5}
\|\phi_N*(\mu^N_0-u_0)\|_{L^n(\Omega;\bB^{\beta_0-\beta}_{\bbp_0;\bba})}\lesssim  N^{-\frac1 2+\zeta\cA_{\bbb1,\bb2}}\leq N^{\mstable_2-1+\zeta\theta_2}.
\end{align}
\textcolor{Dgreen}{Therefore, c}ombining \eqref{SF12}, \eqref{SF2}-\eqref{SF5} \textcolor{Dgreen}{and applying Lemma \ref{lem:MN}}, we obtain that for any $n,m\geq 2p$,
\begin{align*}
\sup_{t\in(0,T_0]}\|\cU^N_t\|_{L^m(\Omega;\mS^\beta_t(b))}
&\lesssim N^{\theta_0\zeta}(N^{-\beta\zeta}+N^{\mstable_\alpha-1+\zeta\theta_\alpha{\black+\eps}})^{n/m}+N^{-\beta\zeta}+N^{\mstable_\alpha-1+\zeta\theta_\alpha{\black+\eps}}.
\end{align*}
\textcolor{Dgreen}{As $n$ can be chosen arbitrarily, given $N$ fixed, we can} choose $n$ large enough, \textcolor{Dgreen}{so that the $(\textcolor{black}{N^{-\beta\zeta}}+N^{\mstable_\alpha-1+\zeta\theta_\alpha})^{n/m-1}$ dominates $N^{\theta_0\modulateorder}$. This enables} to derive the desired estimate.
}
\end{proof}

 \subsection{Proof of Theorem \ref{S1:main01}}\label{sec:ProofMainThm}

By \eqref{Con} and \textcolor{Dgreen}{the essential control }\eqref{S2:01} \textcolor{Dgreen}{on $u_s$},  for any $\beta\geq 0$, we have 
\begin{align}\label{1009:00}
\|b_s*\mu_s\|_{\bB^{\beta}_{\textcolor{Dgreen}{\boldsymbol{\infty}};\bba}}=\|b_s*u_s\|_{\bB^{\beta}_{\textcolor{Dgreen}{\boldsymbol{\infty}};\bba}}\textcolor{Dgreen}{\lesssim \|b_s\|_{\bB^{\beta_b}_{\bbp_b;\bba}}\|u_s\|_{\bB^{\beta-\beta_b}_{\bbp_b';\bba}}}\lesssim  s^{-\frac{\Gap+\beta}{\alpha}}
\|b\|_{\textcolor{Dgreen}{L}^\infty_{T_0}\bB^{\beta_b}_{\bbp_b;\bba}},
\end{align}
\textcolor{Dgreen}{and, according to \eqref{AB2}, 
\begin{align}\label{1009:00-bis}
	\|b_s*\mu_s\|_{\mL^{\textcolor{Dgreen}{\boldsymbol\infty}}}\lesssim\|b_s*u_s\|_{\bB^{0,1}_{\boldsymbol{\infty};\bba}}\lesssim  s^{-\frac{\Gap}{\alpha}}
	\|b\|_{\textcolor{Dgreen}{L}^\infty_{T_0}\bB^{\beta_b}_{\bbp_b;\bba}}.
\end{align}
}
In particular, \textcolor{Dgreen}{\eqref{1009:00} yields: }for any $\beta\in[0,\alpha-1-\Gap)$, there exists $q>\frac{\alpha}{\alpha-1}$  
such that \textcolor{Dgreen}{the drift of \eqref{MV1}}\comm{JF 23/03: Since the symbol $B$ will be frequently used in the proof, we need to emphasize the notation $B$ more heavily. Furthermore keeping the notation $b_s*u_s$ instead $b_s*\mu_s$ may be more favourable for the reading.}
$$
B_s(x,v):=b_s*\mu_s(x,v)\textcolor{Dgreen}{=b_s*u_s(x,v)}
$$
\textcolor{Dgreen}{is in ${L}^{q}_{T_0}\bB^\beta_{\boldsymbol{\infty};\bba}$. We split the proof of Theorem \ref{S1:main01} by establishing successively the estimates \eqref{AA200} and \eqref{AA300}.}

\begin{proof}[Proof of \eqref{AA200}]
Fix $T\in(0,\Tup\textcolor{Dgreen}{]}$ and $\varphi\in C^\infty_b(\mR^{2d})$, and set $B^{T}_t:=B_{T-t}(=b_{T-t}*\mu_{T-t})$. By \cite[Theorem 4.2-(i)]{HRZ23}, there is a unique smooth\comm{JF 23/03: The application of the theorem requires that we recall that $\mathcal C^\beta_{\bba}\simeq \mB^{\beta}_{\infty;\bba}$ from Proposition \ref{prop:AniBesov}.} solution to the following PDE:
$$
\p_tw=(\Delta^{\frac{\nalpha}{2}}_v
-v\cdot\nabla_x)w+B^{T}\cdot\nabla_v w \ \text{on} \ [0,T]\times\R^{2d}, \qquad w(0)=\varphi.
$$
Its Duhamel formulation is further given by
\begin{align*}
w(t)= P_{t}\varphi+\int_0^t  P_{t-s}(B^{T}_s\cdot\nabla_v w(s)) d  s, \qquad t\in[0,T].
\end{align*}
Noting that by \eqref{AD0446},
$$
\Vert\nabla_v P_tf\Vert_{\mL^{\textcolor{Dgreen}{\boldsymbol\infty}}}\lesssim\Vert\nabla_v P_tf\Vert_{\bB^{0,1}_{\infty;\bba}}
\lesssim t^{-\frac 1{\alpha}}\|f\|_{\bB^{0}_{\infty;\bba}}\lesssim t^{-\frac 1{\alpha}}\|f\|_{\mL^{\textcolor{Dgreen}{\boldsymbol{\infty}}}},
$$ 
by \textcolor{Dgreen}{\eqref{1009:00-bis}}, we have
\begin{align*}
\|\nabla_v w(t)\|_{\mL^{\textcolor{Dgreen}{\boldsymbol\infty}}}&\lesssim t^{-\frac 1{\alpha}} 
\|\varphi\|_{\mL^{\textcolor{Dgreen}{\boldsymbol\infty}}}+\int_0^t (t-s)^{-\frac 1{\alpha}}
\|B^{T}_s\|_{\mL^{\textcolor{Dgreen}{\boldsymbol\infty}}}\|\nabla_v w(s)\|_{\mL^{\textcolor{Dgreen}{\boldsymbol\infty}}} d  s\\
&\lesssim t^{-\frac 1{\alpha}}
\|\varphi\|_{\mL^{\textcolor{Dgreen}{\boldsymbol\infty}}}+\int_0^t (t-s)^{-\frac1\alpha}s^{-\frac{\Gap
}{\alpha}}
\|\nabla_v w(s)\|_{\mL^{\textcolor{Dgreen}{\boldsymbol\infty}}} d  s.
\end{align*}
Since $\Gap<1$ 
, by \textcolor{Dgreen}{the classical} Gronwall inequality of Volterra type
{\black,} we have
\begin{align}\label{09:3000}
\|\nabla_v w(t)\|_{\mL^{\textcolor{Dgreen}{\boldsymbol\infty}}}\lesssim t^{-\frac1\alpha}
\|\varphi\|_{\mL^{\boldsymbol\infty}} {\black +\int_0^t(t-s)^{-\frac1\alpha}s^{-\frac{\Lambda+1}{\alpha}}\|\varphi\|_{\mL^{\boldsymbol\infty}} d  s \lesssim t^{-\frac1\alpha}
\|\varphi\|_{\mL^{\boldsymbol\infty}}},
\end{align}
{\black as, by {\bf (H)}, $1+\Lambda<\alpha$.}

By applying \textcolor{Dgreen}{the} generalized version of It\^o's formula to $(t,z)\mapsto u(T-t,z)$ stated in \cite[Lemma 4.3]{HRZ23}, we have
\begin{align*}
\mE\textcolor{Dgreen}{\big[}\varphi(Z_{T})\textcolor{Dgreen}{\big]}=\mE \textcolor{Dgreen}{\big[}w(0,Z_{T})\textcolor{Dgreen}{\big]}=\mE \textcolor{Dgreen}{\big[}w(T,Z_0)\textcolor{Dgreen}{\big]},
\end{align*}
 and
 \begin{align*}
\mE \textcolor{Dgreen}{\big[}\varphi(Z^{N,1}_{T})\textcolor{Dgreen}{\big]}=\mE\textcolor{Dgreen}{\big[} w(0,Z^{N,1}_{T})\textcolor{Dgreen}{\big]}=\mE\textcolor{Dgreen}{\big[} w(T,\xi_1)\textcolor{Dgreen}{\big]}+\mE\textcolor{Dgreen}{\Big[}\int_0^{T} \left(b_s*u^N_s-B_s\right)\cdot \nabla_v w(T-s,Z^{N,1}_s) d  s\textcolor{Dgreen}{\Big]}.
\end{align*}
Thus, by \eqref{Con} and \eqref{09:3000},\comm{S. and JF 22/03: The preceding bound was written in $\mE\|u^N_s-u_s\|_{\bB^{-\beta_b{\color{MColor},1}}_{\bbp'_b;\bba}}$ and so was obsolete since we have the estimate on $\|b_s*u^N_s-b_s*u_s\|_{\mL^\infty}$} 
 \begin{align*}
|\mE\textcolor{Dgreen}{\big[} \varphi(Z_{T})\textcolor{Dgreen}{\big]}-\mE\textcolor{Dgreen}{\big[} \varphi(Z^{N,1}_{T})\textcolor{Dgreen}{\big]}|&\le \mE\textcolor{Dgreen}{\Big[}\int_0^{T} \|b_s*u^N_s-b_s*u_s\|_{\mL^{\textcolor{Dgreen}{\boldsymbol{\infty}}}}\|\nabla_v w(T-s)\|_{\mL^{\textcolor{Dgreen}{\boldsymbol{\infty}}}} d  s\textcolor{Dgreen}{\Big]}\\
&\lesssim \|\varphi\|_{\mL^{\textcolor{Dgreen}{\boldsymbol{\infty}}}}\int_0^{T}
(T-s)^{-\frac1\alpha}\textcolor{Dgreen}{\|b_s*u^N_s-b_s*u_s\|_{\mL^{\boldsymbol{\infty}}}} d  s.
\end{align*}
{\black Since the right-hand side depends only on $\|\varphi\|_{\mL^\infty}$, we can extend the estimate to the total variation:
$$
\Vert\mathcal L(Z_t)-\mathcal L(Z^{N,1}_t)\Vert_{\text{var}}=\sup_{\varphi\,\text{bounded Borel func.}}|\mE\big[ \varphi(Z_{T})\big]-\mE\big[ \varphi(Z^{N,1}_{T})\big]|,
$$
 approximating arbitrary $\varphi$ with smooth functions.}

Estimate \eqref{AA200} now follows from \eqref{S3:main01} \textcolor{Dgreen}{which gives\comm{From JF to S. 01/04: The modification in the orange part is just $\infty\rightarrow\boldsymbol{\infty}$ (nonbold -> bold).} 
 \begin{align}
	& \mE\textcolor{Dgreen}{\Big[}\int_0^T(T-s)^{-\frac1\alpha} \textcolor{orange}{\|b_s*u^N_s-b_s*u_s\|_{\mL^{\textcolor{Dgreen}{\boldsymbol{\infty}}}}} d  s\textcolor{Dgreen}{\Big]}\notag\\
	&\lesssim \int_0^{T}(T-s)^{-\frac1\alpha}(1\wedge s)^{-\frac{\beta+\Gap}{\alpha}}\mE\textcolor{Dgreen}{\big[}\|u^N_s-u_s\|_{\mS^\beta_s(b)}\textcolor{Dgreen}{\big]} d  s\lesssim N^{-\beta\modulateorder}+N^{\mstable_\alpha-1+\zeta\theta_\alpha+\eps},\label{TO_EXPLAIN_FIRST_CTR_BETA}
\end{align}
the limit $\beta<\alpha-1-\Gap \Leftrightarrow \frac{\beta+\Gap}{\alpha}<1-\textcolor{orange}{\frac 1\alpha}$ ensuring the finiteness of the integral $\int_0^{T}(T-s)^{-\frac1\alpha}(1\wedge s)^{-\frac{\beta+\Gap}{\alpha}}\, d  s$. 
}
\end{proof}
\begin{proof}[Proof of \eqref{AA300}]
	\textcolor{Dgreen}{Since \textcolor{orange}{$b\in \textcolor{Dgreen}{L}^\infty_{\Tup}\bB^{1+\frac\alpha2,\frac \alpha2-1+\beta_b}_{\bbp_b;x,\bba}$}, according to Lemma 4.7-$(ii)$ in \cite{HRZ23}, for any $\beta \in\big(1-\frac{\alpha}2,\bar \beta_\alpha)$, $B$ belongs to the space $ L^q_{T_0}(\bC^\beta_\bba\cap\bC^{\frac{\alpha+\beta}{\alpha+1}}_{x})$, for $\bC^{\frac{\alpha+\beta}{\alpha+1}}_x$ denoting the (isotropic) H\"older-Zygmund space on the $x$-variable. 
	}


 Then, \textcolor{Dgreen}{ for any fixed $\lambda>0$, by \cite[Theorem 4.2-(i)]{HRZ23}} {\black (inverting the forward form (4.4) therein with the time reversal $t\to T-t$)}
 , there is a unique solution $u$ to the following (Zvonkin type) backward PDE for $T\in(0,\Tup)$:
\begin{align}\label{S6:PDE}
\p_t w+(\Delta_v^{\frac \alpha 2}-v\cdot\nabla_x-\lambda )w +B\cdot\nabla_v w=B, \quad w(T)=0,
\end{align}
such that, setting $\nabla u=(\nabla_xw,\nabla_v w)$, by \cite[Theorem 4.2-(iii)]{HRZ23}, for $\lambda$ large enough
: 
\begin{align}\label{S60}
		\|\nabla w\|_{\mL_{T}^{\infty}}:=\|\nabla w\|_{L^\infty((0,T);\mL^{\textcolor{Dgreen}{\boldsymbol\infty}})}\le \tfrac 12,\qquad  \textcolor{Dgreen}{g}(s):=\textcolor{Dgreen}{\|\textcolor{black}{\nabla w(s)} \|_{C^{\delta}_v}}\in L^q([0,T_0]),
\end{align}
\textcolor{Dgreen}{where, if $\alpha=2$,  \textcolor{black}{$\delta=1$ and}  $\|\textcolor{black}{\nabla w(s)} \|_{C^{1}_v}\textcolor{black}{:=}\|\nabla_x\nabla_v w(s) \|_{\mL^{\boldsymbol{\infty}}}+\|\nabla^2_v w(s) \|_{\mL^{\boldsymbol{\infty}}}$, and, if $\alpha\in(1,2)$,  $\delta\ge \frac{\alpha}2+\varepsilon_0$ for some $ \varepsilon_0>0$,}	 \textcolor{black}{and}
\begin{align*}
\|\textcolor{Dgreen}{\textcolor{black}{\nabla w(s)}}\|_{C_v^{\delta}}:=\|\textcolor{black}{\nabla w(s)}\|_{\mL^{\textcolor{Dgreen}{\boldsymbol\infty}}}+\sup_{v\in\mR^d\backslash\{0\}}\frac{\|\delta^{(1)}_{(0,v)}\textcolor{black}{\nabla w(s)}\|_{\mL^{\textcolor{Dgreen}{\boldsymbol{\infty}}}}}{|v|^\delta},
\end{align*}
\textcolor{Dgreen}{$\delta^{(1)}_{(0,v)}$ being the difference operator  defined in \eqref{Dif1}.}
In particular, for each $t\in[0,T]$, $z\mapsto\Phi_t(z):=z+(0,w(t,z))$
 forms a $C^1$-diffeomorphism on $\mathbb R^{2d}$.
By It\^o's formula, we have, for any $t\in[0,T]$,
\begin{align*}
\Phi_t(Z_t^{N,1})=&\Phi_0(\xi_1)+\int_0^t(V^{N,1}_s,\lambda w 
(s,Z^{N,1}_s)) d  s+M_t^{N,1}
+{\ZXC \int_0^t[b_s*(u_s^N-\textcolor{Dgreen}{u}_s)\cdot \nabla_v\Phi_s](Z^{N,1}_s)  d  s},
\end{align*}
where
\begin{equation*}
M_t^{N,1}=\left\{
\begin{aligned}
&\sqrt{2}\int_0^t \nabla_v\Phi_s(Z^{N,1}_s) d  W_s^1,\ \qquad{\rm if}\ \alpha=2,\\
&\int_0^t\!\!\!  \int_{\R^d }\delta^{(1)}_{(0,v)}\Phi_s(Z^{N,1}_s) \tilde {\mathcal N}^1( d  s, d  v),\ {\rm if }\ \alpha\in (1,2).
\end{aligned}
\right.
\end{equation*}
Similarly, for $Z^1_t=(X^1_t,V^1_t)$, \textcolor{Dgreen}{the solution to \eqref{MV1} driven by $L^{\alpha,1}$ and starting at the initial $Z^1_0=\xi_1$, we have}
\begin{align*}
\Phi_t(Z^1_t)=&\Phi_0(\xi_1)+\int_0^t(V^1_s,\lambda w(s,Z^1_s)) d  s+M_t,
\end{align*}
where 
\begin{equation*}
M_t=\left\{
\begin{aligned} 
&\sqrt{2}\int_0^t \nabla_v\Phi_s(Z^1_s) d  W_s^1,\ \qquad {\rm if }\ \alpha=2,\\
&\int_0^t \!\!\! \int_{\R^d }\delta^{(1)}_{(0,v)}\Phi_s(Z^1_s) \tilde {\mathcal N}^1( d  s, d  v),\ {\rm if }\ \alpha\in (1,2).
\end{aligned}
\right.
\end{equation*}
Thus, \textcolor{Dgreen}{according to \eqref{S60},} 
 \begin{align*}
|\Phi_t(Z^{N,1}_t)-\Phi_t(Z^{\textcolor{Dgreen}1}_t)|\lesssim&(1+\lambda\|\nabla w 
\|_{\mL^\infty_{T}})\int_0^t |Z^{N,1}_s-Z^1_s| d  s+|M_t^{N,1}-M_t|\\
&+\|\nabla_v\Phi\|_{\mL^\infty_{T}}\int_0^t |b_s*(u^N_s-u_s)(Z^{N,1}_s)| d  s,
\end{align*}
where
\begin{equation*}
M_t^{N,1}-M_t=\left\{
\begin{aligned}
&\textcolor{Dgreen}{\sqrt{2}}\int_0^t\left(\nabla_v w(s,Z^{N,1}_s)-\nabla_v w(s,Z^1_s)\right) d  W^1_s,\ \qquad {\rm if}\ \alpha=2,\\
&\int_0^t\!\!\! \int_{\R^d}\Big\{\delta^{(1)}_{(0,v)}\Phi_s(Z^{N,1}_s)-\delta^{(1)}_{(0,v)}\Phi_s(Z^1_s)\Big\} \tilde {\mathcal N}^1( d  s, d  v),\ {\rm if }\ \alpha\in (1,2).
\end{aligned}
\right.
\end{equation*}
Observe in this latter case that:
\begin{align*}
	&|\delta^{(1)}_{(0,v)}\Phi_s(Z^{N,1}_s)-\delta^{(1)}_{(0,v)}\Phi_s(Z^1_s)|\lesssim |Z_s^{N,1}-Z^{\textcolor{Dgreen}1}_s| \Big(\|\nabla w(s,\cdot)\|_{\mL^{\textcolor{Dgreen}{\boldsymbol\infty}}}\ind_{\{|v|> 1\}}+\|\nabla w(s,\cdot)\|_{C_v^\delta}|v|^\delta\ind_{\{|v|\le 1\}}\Big).
\end{align*}
Combining BDG's inequality \textcolor{Dgreen}{and} \eqref{S60}, we have 
\begin{align*}
\mathbb E\textcolor{Dgreen}{\big[}|M_t^{N,1}-M_t|^{2}\textcolor{Dgreen}{\big]}&\lesssim\begin{cases}{\displaystyle \textcolor{Dgreen}{\sqrt{2}}\int_0^t\mE\textcolor{Dgreen}{\big[}|\nabla_v w(s,Z^{N,1}_s)-\nabla_v w(s,Z^{\textcolor{Dgreen}1}_s)|^2\textcolor{Dgreen}{\big]} d  s},\ \qquad {\rm if}\ \alpha=2,\\
		{\displaystyle\int_0^t \!\!\! \int_{\R^d}\textcolor{Dgreen}{\mathbb E\big[}\big|\delta^{(1)}_{(0,v)}\Phi_s(Z^{N,1}_s)-\delta^{(1)}_{(0,v)}\Phi_s(Z^1_s)\big|^2 \textcolor{Dgreen}{\big]}\nu( d  v) d  s},\ {\rm if }\ \alpha\in (1,2).
	\end{cases}\\
	&\lesssim \int_0^t  |g(s)|^2\mE|Z^{N,1}_s-Z^{\textcolor{Dgreen}1}_s|^2 d  s,
\end{align*}
observing in particular that, in the case $\alpha\in(1,2)$, since $2\delta>\alpha$,
$$
\int |v|^{2\delta}\ind_{\{|v|\le 1\}}\nu( d  v)<\infty.
$$ 
 We note that $\textcolor{Dgreen}{|}g\textcolor{Dgreen}{|}^2\in L^{q/2}([0,T_0])$ with $q>\alpha/(\alpha-1)\textcolor{black}{\ge}2$.
Therefore, {\black since $\{\Phi_s\}_{s\in[0,T]}$ is a family of $C^1$-diffeomorphisms with uniformly bounded Lipschitz norms (including those of the inverse $\{\Phi^{-1}_s\}_{s\in[0,T]}$),}
 \begin{align*}
\mE\left[\sup_{s\in[0,t]}|Z^{N,1}_s-Z^{\textcolor{Dgreen}1}_s|^2\right]&\lesssim\mE\left[\sup_{s\in[0,t]}|\Phi_s(Z^{N,1}_s)-\Phi_s(Z^{\textcolor{Dgreen}1}_s)|^2\right]\\
&\lesssim \int_0^t |g(s)|^2\mE|Z^{N,1}_s-Z^{\textcolor{Dgreen}1}_s|^2 d  s+\mE\textcolor{orange}{\left[\left(\int_0^{T} \|b_s*(u^N_s-u_s)\|_{\mL^{\textcolor{Dgreen}{\boldsymbol\infty}}} d  s\right)^2\right]},
\end{align*}
which implies, by Gronwall's inequality, that\comm{S. and JF 23/03: The preceding upper-bound was $\left(\int_0^{T} \mE\|u^N_s-u_s\|_{\bB^{-\beta_b,1}_{\bbp'_{b};\bba}} d  s\right)^2$. JF: 24/ 03: Moreover the square has been too lightly handled in the computations: The square imposes that $\beta<\frac \alpha2-\Gap$ and this restriction has not been considered in the main theorem ! Update of S. and JF 26/03: Everything is fine as $\alpha\le 2\Leftrightarrow \alpha -1\le \alpha/2$ so the bound $\beta<\alpha-1-\Lambda$ implies $\beta<\alpha/2-\Lambda$. No need for an additional restriction on $\beta$.} 
\begin{align*}
\mE\left[\sup_{s\in[0,T]}|Z^{N,1}_s-Z^{\textcolor{Dgreen}1}_s|^ 2\right]&\lesssim\mE\textcolor{Dgreen}{\Bigg[}\int_0^{T} \|b_s*(u^N_s-u_s)\|_{\mL^\infty}^{\textcolor{orange}{2}} d  s\textcolor{Dgreen}{\Bigg]}\\
&\textcolor{orange}{\lesssim \left(\int_0^{T}(1\wedge s)^{-2(\frac{\beta+\Gap}{\alpha})}\mE\textcolor{Dgreen}{\Big[}\|u^N_s-u_s\|_{{\mS^\beta_s(b)}}^2\textcolor{Dgreen}{\Big]} d  s\right).}
\end{align*}
\textcolor{orange}{Observe that $\beta<\alpha-1-\Gap $ implies $\textcolor{orange}{\beta}<\frac\alpha2-\Gap$.  Recall indeed that since $\alpha\in (1,2] $ then $\alpha-1\le \frac \alpha 2 $. Hence,
$\int_0^{T}(1\wedge s)^{-2(\frac{\beta+\Gap}{\alpha})}  d  s$ is finite.} Estimate \eqref{AA300} \textcolor{Dgreen}{again} follows by \eqref{S3:main01}.
\end{proof}

\section{Proof of the main technical Lemmas} \label{sec:ProofMain}
\subsection{Proof of Lemma \ref{lem:HN}}
First of all, applying \eqref{AD0446} (for $\textcolor{Dgreen}{\beta}=0$, $\beta_1=\textcolor{Dgreen}{-1}$, $\bbp_1=\textcolor{Dgreen}{\bbp}\textcolor{Dgreen}{=\bbp_0}$), we have
\begin{align}\label{B0}
\|\cH^{N}_t\|_{\bB^{0,1}_{\bbp_0;\bba}}
  & {\ZXC\lesssim \int^t_0\|P_{t-s}\div_v H^{N}_s\|_{\bB^{0,1}_{\bbp_0;\bba}} d  s\lesssim \int_0^t (t-s)^{-\frac1{\alpha}}\|\div_vH^N_s\|_{\bB^{-1}_{\bbp_0;\bba}} d  s}\no\\
   &{\ZXC\lesssim \int_0^t (t-s)^{-\frac1{\alpha}}\|H^N_s\|_{\bB^{0}_{\bbp_0;\bba}} d  s\lesssim \int_0^t (t-s)^{-\frac1{\alpha}}\|H^N_s\|_{\bbp_0} d  s,}
\end{align}
using as well \eqref{AB2} for the \textcolor{Dgreen}{two} last inequalit\textcolor{Dgreen}{ies}.
In the same way, \textcolor{Dgreen}{using again} \eqref{AD0446} (for $\beta=-\beta_b$, $\beta_1=\textcolor{Dgreen}{-1}$, $\textcolor{Dgreen}{\bbp}=\bbp_{b}', \bbp_1=\bbp_0$), recalling, from {\bf (H)} that, since $\frac 1{\bbp_0}+\frac 1{\bbp_b} \ge 1$, $\bbp_0\le \bbp_{b}' $ and, from \eqref{DEF_GAP}, that 
$\ZXC\Gap+\beta_0+1=\cA_{\bbp_0,\bbp'_b}-\beta_b+1>0$, \textcolor{Dgreen}{we get}
\begin{align}\label{B1}
\|\cH^{N}_t\|_{\bB^{-\beta_b,1}_{\bbp_b';\bba}}\lesssim {\ZXC\int^t_0\|P_{t-s} \div_vH^{N}_s\|_{\bB^{-\beta_b,1}_{\bbp_b';\bba}} d  s}
\lesssim \int_0^t (t-s)^{-\frac{\Gap+\beta_0+1}{\alpha}}\|H^N_s\|_{\bbp_0} d  s.
\end{align}
In order to  estimate  $\|H^N_s\|_{\bbp_0}$,  by adding and subtracting the elements $(b_s*u_s)u^N_s$ and $\<(b_s*u_s)(\transOp_s\phi_N)(\cdot-z),\mu^N_s\>$, we obtain from \eqref{DEF_H} \textcolor{Dgreen}{and} \eqref{DEF_G} the following decomposition for $H^N_s(z)$\comm{\textcolor{Dgreen}{S. and JF 22/03: The modifications are here made to make clearer the decomposition of $H^N$.}}:
\begin{align}\label{DecomposeHN}
H^N_s(z)&=\<(b_s*\cU^N_s)(\transOp_s\phi_N)(\cdot-z),\mu^N_s\>+((b_s*u_s)\cU^N_s)(z)\notag\\
&\quad-\textcolor{Dgreen}{\Big(}\<(b_s*u_s)(\transOp_s\phi_N)(\cdot-z),\mu^N_s\>\textcolor{Dgreen}{-}((b_s*u_s)u^N_s)(z)\textcolor{Dgreen}{\Big)}\notag\\
&=:H^{1,N}_s(z)+H^{2,N}_s(z)\textcolor{Dgreen}{-}H^{3,N}_s(z).
\end{align}
For $H^{1,N}_s(z)$, noting that for any $z\in\mR^{2d}$,\comm{\textcolor{Dgreen}{S. and JF 22/03: Same reason as invoked in the preceding footnote.}}
$$
|H^{1,N}_s(z)|\textcolor{Dgreen}{=\Big|\<(b_s*\cU^N_s)(\transOp_s\phi_N)(\cdot-z),\mu^N_s\>\Big|}\leq\|b_s*\cU^N_s\|_{\textcolor{Dgreen}{\boldsymbol\infty}}\<(\transOp_s\phi_N)(\cdot-z),\mu^N_s\>=\|b_s*\cU^N_s\|_{\textcolor{Dgreen}{\boldsymbol\infty}} u^N_s(z),
$$
we have
$$
\|H^{1,N}_s\|_{\bbp_0}\leq\|b_s*\cU^N_s\|_{\textcolor{Dgreen}{\boldsymbol\infty}}\|u^N_s\|_{\bbp_0}.
$$
For $H^{2,N}_s(z)$, we directly have
$$
\|H^{2,N}_s\|_{\bbp_0}\leq\|b_s*u_s\|_{\textcolor{Dgreen}{\boldsymbol\infty}}\|\cU^N_s\|_{\bbp_0}
\stackrel{\eqref{Con}}{\lesssim} \|b_s\|_{\bB^{\beta_b}_{\bbp_b;\bba}}\|u_s\|_{\bB^{-\beta_b,1}_{\bbp_b';\bba}}\|\cU^N_s\|_{\bbp_0}
\stackrel{\eqref{S2:01}}{\lesssim}s^{-\frac\Gap{{\alpha}}}
\|\cU^N_s\|_{\bbp_0}.
$$
For $H^{3,N}_s(z)$,  by definition, we can write
\begin{align}\label{B3}
H^{3,N}_s(z)=\textcolor{Dgreen}{-}\int_{\mR^{2d}}(b_s*u_s(z)-b_s*u_s(\bar{z}))(\transOp_s\phi_N)(z-\bar{z})\mu^N_s( d  \bar{z}).
\end{align}
Note that, for any $\kappa\in(0,1)$, \textcolor{Dgreen}{and, for $\bC^{\kappa}_\bba=\bB^{\kappa,\infty}_{\boldsymbol{\infty};\bba}$ as in Proposition \ref{prop:AniBesov},}
\begin{align*}
|b_s*u_s(z)-b_s*u_s(\bar{z})|&\leq |(b_s*u_s)(x,v)-(b_s*u_s)(x-s(v-\bar v),v)|\\
&\quad+|(b_s*u_s)(x-s(v-\bar v),v)-(b_s*u_s)(\bar x,\bar v)|\\
&\le s^{\kappa}|v-\bar v|^{\kappa}\|b_s*u_s\|_{\bC^{(1+\alpha)\kappa}_\bba}
+|\transOp_s(z-\bar z)|_\bba^{\kappa}\|b_s*u_s\|_{\bC^{\kappa}_\bba}.
\end{align*}
By \eqref{Con} and \eqref{S2:01}, we have, for any $\beta\geq 0$,
$$
\|b_s*u_s\|_{\bC^{\beta}_\bba}\lesssim 
\|b_s\|_{\bB^{\beta_b}_{\bbp_b;\bba}}\|u_s\|_{\bB^{\beta-\beta_b,1}_{\bbp'_b;\bba}}\lesssim s^{-\frac{\Gap+\beta}{\alpha}}.
$$
Hence,
\begin{align*}
|b_s*u_s(z)-b_s*u_s(\bar{z})|&\lesssim 
(s^{-\frac{\Gap+(1+\alpha)\kappa}{\alpha}+\kappa}|v-\bar v|^{\kappa}+s^{-\frac{\Gap+\kappa}{\alpha}}|\transOp_s(z-\bar z)|_\bba^{\kappa})\\
&\lesssim s^{-\frac{\Gap+\kappa}{\alpha}} (|v-\bar v|^{\kappa}+|\transOp_s(z-\bar z)|_\bba^{\kappa}) .
\end{align*}
Note that for any $z=(x,v)\in \mR^{2d}$, thanks to supp$\phi_N\subset\{(x,v): |N^{(1+\alpha)\modulateorder} x|^{\frac1{1+\alpha}}+|N^{\modulateorder}v|\le C\}= \{z: |z|_\bba\le CN^{-\modulateorder}\}$
$$
(|v|+|\transOp_sz|_\bba)^\kappa\transOp_s\phi_N(z)\lesssim N^{-\modulateorder\kappa}\transOp_s\phi_N(z).
$$
Substituting these into \eqref{B3}, we get for any $\kappa\in\textcolor{Dgreen}{[}0,1)$,
 \begin{align*}
\|H^{3,N}_s\|_{\bbp_0}&\lesssim s^{-\frac{\kappa+\Gap}{\alpha}}
\left\|\int_{\mR^{2d}}(|\cdot-\bar v|^{\kappa}+|\transOp_s(\cdot-\bar{z})|_\bba^\kappa)(\transOp_s\phi_N)(\cdot-\bar{z})\mu^N_s( d  \bar{z})\right\|_{\bbp_0}\\
&\lesssim s^{-\frac{\kappa+\Gap}{\alpha}}
N^{-\modulateorder\kappa}\left\|\int_{\mR^{2d}}\transOp_s\phi_N(\cdot-\bar{z})\mu^N_s( d  \bar{z})\right\|_{\bbp_0}= 
s^{-\frac{\kappa+\Gap}{\alpha}}  
N^{-\kappa\modulateorder}\|u^N_s\|_{\bbp_0}.
\end{align*}
This implies that
\begin{align*}
\|H^{3,N}_s\|_{\bbp_0}\lesssim \left[(s^{-\frac{\kappa+\Gap}{\alpha}} N^{-\kappa\modulateorder})\wedge s^{-\frac{\Gap}{\alpha}}\right]\|u^N_s\|_{\bbp_0}&\leq s^{-\frac{\Gap}{\alpha}}\|\cU^N_s\|_{\bbp_0}+s^{-\frac{\kappa+\Gap}{\alpha}}
N^{-\kappa\modulateorder}\|u_s\|_{\bbp_0}\\
&\stackrel{\eqref{S2:01}}{\lesssim} s^{-\frac{\Gap}{\alpha}}
\|\cU^N_s\|_{\bbp_0}+s^{\frac{\beta_0-\kappa-\Gap}{\alpha}} 
N^{-\kappa\modulateorder}.
\end{align*}
Combining the above calculations, we obtain that: for any $\kappa\in(0,1)$,
\begin{align}\label{08:21}
\begin{split}
  \|H^{N}_s\|_{\bbp_0}\lesssim\|b_s*\cU^N_s\|_{\textcolor{Dgreen}{\boldsymbol{\infty}}}\|u^N_s\|_{\bbp_0}+
s^{-\frac{\Gap}{\alpha}} \|\cU^N_s\|_{\bbp_0}+s^{\frac{\beta_0-\kappa-\Gap}{\alpha}} N^{-\kappa\modulateorder}.
  \end{split}
\end{align}
By \eqref{B0}, \eqref{B1} and \eqref{08:21}, we get, for $\kappa<{\ZXC (\alpha+\beta_0-\Gap)\wedge 1}$,
$$
    \|\cH^{N}_t\|_{\bB^{0,1}_{\bbp_0;\bba}}
    \lesssim \int_0^t (t-s)^{-\frac1{\alpha}}\Big(\|b_s*\cU^N_s\|_{\textcolor{Dgreen}{\boldsymbol{\infty}}}\|u^N_s\|_{p_0} +s^{-\frac{\Gap}{\alpha}}\|\cU^N_s\|_{p_0}\Big) d  s
    +t^{\frac{\alpha-1+\beta_0-\kappa-\Gap}{\alpha}
    }N^{-\kappa\modulateorder},
$$
and (remembering that, from \eqref{DEF_GAP} and $\beta_0\le 0 $, $\frac{\Gap+\beta_0+1}{\alpha}<1$)
$$
\|\cH^{N}_t\|_{\bB^{-\beta_b,1}_{\bbp_b';\bba}}\lesssim \int_0^t (t-s)^{-\frac{\Gap+\beta_0+1}{\alpha}}\Big(\|b_s*\cU^N_s\|_{\textcolor{Dgreen}{\boldsymbol{\infty}}}\|u^N_s\|_{\bbp_0} 
+s^{-\frac{\Gap}{\alpha}}\|\cU^N_s\|_{p_0}\Big) d  s+t^{\frac{\alpha-1-\kappa-2\Gap}{\alpha}} N^{-\kappa\modulateorder}.
$$
In particular, if we take {\ZXC $\kappa=\beta$ with $\beta\in(0,(\alpha+\beta_0-\Gap)\wedge 1)$}, then it is easy to see that
\begin{align*}
\begin{split}
\|\cH^{N}_t\|_{\mS^{\beta}_{t}(b)}&\stackrel{\eqref{Con}}{\lesssim} t^{\frac{\beta-\beta_0}{\alpha}}
\|\cH^{N}_t\|_{\bB^{0,1}_{\bbp_0;\bba}}+t^{\frac{\beta+\Gap}{\alpha}}{\ZXC\|b_t\|_{\bB^{\beta_b}_{\bbp_b;\bba}}}\|\cH^{N}_t\|_{\bB^{-\beta_b,1}_{\bbp_b';\bba}}\\
&\lesssim N^{{\ZXC -\beta}\modulateorder}+\int^t_0
G_\beta(t,s)\Big(\|b_s*\cU^N_s\|_{\textcolor{Dgreen}{\boldsymbol{\infty}}}\|u^N_s\|_{\bbp_0} +s^{-\frac{\Gap}{\alpha}}\|\cU^N_s\|_{\bbp_0}\Big) d  s,
\end{split}
\end{align*}
where $G_\beta(t,s)$ is given as in \eqref{AZ12}.
\subsection{Proof of Lemma \ref{lem:MN}} 
\label{sec:341}
\textcolor{Dgreen}{Our proof arguments are first focused on establishing \textcolor{black}{general} estimates on $\cM^N_t$ successively in the Brownian case $\alpha=2$ (Theorem \ref{08:thm01} and Lemma \ref{08:29} below) and next the pure-jump case $\alpha\in(1,2)$ (Theorem \ref{thm:MartPart-Stable}). Combining these results, the proof of Lemma \ref{lem:MN} is achieved at the end of the section.}

We first show the following estimate.
\bl\label{S3:2/1:lem1}
Let $\alpha=2$, $\bbp\in[2,\infty]^2$ and $\bbp_1\in[1,\infty)^2$ with $\bbp_1\leq\bbp$ and $\beta, \beta_1\in\mR$.
For any $m\ge 1$ and $\beta_2>(\beta-\beta_1)\vee 0$, 
there is a constant $C=C(m,\bbp,\bbp_1,\beta,\beta_1,\beta_2,d)>0$ such that, for any $N\ge 1$ and $f\in \bB^{\beta_1}_{\bbp_1;\bba}$,
\begin{align}\label{S4:08:06}
\sup_{t>0}\|f*\cM^N_t\|_{L^m(\Omega;\bB^{\beta,1}_{\bbp;\bba})} \le C   N^{-\frac12+\modulateorder(3\beta_2+\cA_{\bbp_1,\bbp})}\|f\|_{\bB^{\beta_1}_{\bbp_1;\bba}}.
\end{align}
In particular, for any $\bbp\in[2,\infty]^2$, $\beta\in\mR$, $\beta_2>\beta\vee 0$ and $m\ge 1$, 
\begin{align}\label{S4:08:46}
\|\cM^N_t\|_{L^m(\Omega;\bB^{\beta,1}_{\bbp;\bba})} \le C   N^{-\frac12+\modulateorder(3\beta_2+\cA_{\bbb1,\bbp})}.
\end{align}
\el

\begin{proof}
{\ZXC By \textcolor{Dgreen}{the\comm{JF 16/04: We never specified this mention, so why starting now ...}} embedding \eqref{Sob1}, without loss of generality we may assume $\bbp\in[2,\infty)^2$.}
By the definition of \textcolor{Dgreen}{the anisotropic} Besov norm and Minkowski's inequality, we have\comm{JF 02/04: For the sums start at $j\ge -1$ or $j\ge 0$ ?}
\begin{align*}
 \|f*\cM^N_t\|_{L^m(\Omega;\bB^{\beta,1}_{\bbp;\bba})}
\leq\sum_{j\geq {\color{black}0}}
2^{\beta j} \|f*\cR^\bba_j\cM^{N}_t\|_{L^m(\Omega;\mL^\bbp)}.
\end{align*}
Recalling the definition \textcolor{Dgreen}{\eqref{MART_FIELD_II}} of $\cM^N_t$, we have
\begin{align*}
f*\cR^\bba_j\cM^{N}_t(z)&=\frac1N\sum_{i=1}^N\int^t_0f*\cR^\bba_jP_{t-s}((\nabla_v\transOp_s\phi_N)(Z^{N,i}_s-\textcolor{black}{\cdot}))(z) d  W^i_s\\
&=\frac1N\sum_{i=1}^N\int^t_0(f*\cR^\bba_jP_{t-s}\nabla_v\transOp_s\phi_N)(\transOp_{t-s}Z^{N,i}_s-z) d  W^i_s,
\end{align*}
where we have used the symmetry property $\phi_N(z)=\phi_N(-z)$ and, for a function $g:\mR^{2d}\to\mR$ and $z'\in\mR^{2d}$,
\begin{align}\label{AQ1}
P_{t-s}(g(z'-\textcolor{black}{\cdot}))(z)=(P_{t-s}g)(\transOp_{t-s}z'-z).
\end{align}
Next let us define, for $t$  fixed, the stopped process 
$$	f*\cR^\bba_j\cM^{N}_{u,t}(z)=\frac1N\sum_{i=1}^N\int^{\textcolor{black}u}_0(f*\cR^\bba_jP_{t-s}\nabla_v\transOp_s\phi_N)(\transOp_{t-s}Z^{N,i}_s-z) d  W^i_s,\quad u\in[0,t],
$$
which defines an $\mL^{\bbp}$-valued martingale. With this, we are in position (as in \cite{OlRiTo-21}) to apply a functional BDG (Burkh\"older-Davis-Gundy) inequality to derive the control \eqref{S4:08:06}.  Following e.g. \cite{VerYar-19} (see Theorem 16.1.1 therein), this BDG inequality states that, given $(E,|\cdot|_E)$ a UMD (unconditional martingale difference) Banach space, for any $E$-valued martingale $\{\mathcal m_t\}_{t\ge 0}$ with quadratic variation $\{[\mathcal m]_t\}_{t\ge 0}$ and any $p\in (1,\infty)$, we have, for all $t\ge 0$,  
	\begin{equation}\label{BDG}
	\|\sup_{0\le u\le t}|\mathcal m_u|_E\|_{L^p(\Omega)}{\color{black}\le c_{E,p}}\sup_{0\le u\le t}\||\mathcal m_u|_E\|_{L^p(\Omega)}{\color{black}\le C_{E,p}}  \||[\mathcal m]^{1/2}_t|_E\|_{L^{p}(\Omega)},
\end{equation}
	for some constants $C_{E,p}=C(E,p)$ and $c_{E,p}=c(E,p)$. 
The property that \textcolor{Dgreen}{the mixed space} $\mL^{\bbp}$ is indeed a UMD space for $\bbp\in(1,\infty)^2$ - along a recall of the notion of UMD spaces - is established in the appendix section (see Corollary \ref{cor:UMD}). 
Consequently, applying \eqref{BDG} {\color{black}with $m>1$ (otherwise we use H\"older's inequality to increase the exponent)}, we have 
\begin{align}
\mE \textcolor{Dgreen}{\big[}\|f*\cR^\bba_j\cM^{N}_t\|_{\bbp}^m\textcolor{Dgreen}{\big]}&\lesssim \sup_{0\le u\le t}\mE\textcolor{Dgreen}{\big[} \|f*\cR^\bba_j\cM^{N}_{u,t}\|_{\bbp}^m\textcolor{Dgreen}{\big]}\lesssim \mE\textcolor{Dgreen}{\big[} \|[f*\cR^\bba_j\cM^{N}_{u,t}]_{u=t}\|_{\bbp}^m\textcolor{Dgreen}{\big]}
\no\\
&\lesssim\mE \left\|\left(\frac 1{N^2}\sum_{i=1}^N\int^t_0|(f*\cR^\bba_jP_{t-s}\nabla_v\transOp_s\phi_N)(\transOp_{t-s}Z^{N,i}_s-\cdot)|^2 d  s\right)^{1/2}\right\|_{\bbp}^m\no\\
&\lesssim N^{-m}
\mE\left\|\left(\sum_{i=1}^N\int^t_0|(f*\cR^\bba_jP_{t-s}\nabla_v\transOp_s\phi_N)(\transOp_{t-s}Z^{N,i}_s-\cdot)|^2 d  s\right)^{1/2}\right\|^m_{\bbp}\no\\
&=N^{-m}\mE\left\|\sum_{i=1}^N\int^t_0|(f*\cR^\bba_jP^\kappa_{t-s}\nabla_v\transOp_s\phi_N)(\transOp_{t-s}Z^{N,i}_s-\cdot)|^2 d  s\right\|^{m/2}_{\bbp/2}\no\\
&\leq N^{-m}\mE\left(\sum_{i=1}^N\int^t_0\|(f*\cR^\bba_jP_{t-s}\nabla_v\transOp_s\phi_N)(\transOp_{t-s}Z^{N,i}_s-\cdot)\|^2_{\bbp} d  s\right)^{m/2}\no\\
&= N^{-m/2}\left(\int^t_0\|f*\cR^\bba_jP_{t-s}\nabla_v\transOp_s\phi_N\|^2_{\bbp} d  s\right)^{m/2}.\label{AC:pre}
\end{align}
Let $\bbp_2\in[1,\infty]^2$ be defined \textcolor{Dgreen}{by} $\textcolor{Dgreen}{\bbb1}+\frac1{\bbp}=\frac1{\bbp_1}+\frac1{\bbp_2}$.
By Young's inequality, \textcolor{black}{the D}efinition \textcolor{Dgreen}{\ref{bs} of the anisotropic Besov norm, } together with \eqref{Ph0}, and \textcolor{Dgreen}{the heat kernel estimate} \eqref{AD0306}, we have
\begin{align*}
\|f*\cR^\bba_jP_{t-s}\nabla_v\transOp_s\phi_N\|_{\bbp}
&\lesssim 2^{-\beta_1 j}\|f\|_{\bB^{\beta_1}_{\bbp_1;\bba}}\|\cR^\bba_jP_{t-s}\nabla_v\transOp_s\phi_N\|_{\bbp_2}\\
&\lesssim2^{j(1-\beta_2-\beta_1)}\|f\|_{\bB^{\beta_1}_{\bbp_1;\bba}}((2^{2j}(t-s))^{-1}\wedge 1)\|\nabla_v\transOp_s\phi_N\|_{\bB^{\beta_2-1}_{\bbp_2;\bba}}.
\end{align*}
Observing that
 \begin{equation}\label{INT_DYAD_COMP}
 \int^t_0((2^{2j} s)^{-1}\wedge 1)^2 d  s\lesssim 2^{-2j},
 \end{equation}
  we further have, by {\black  \eqref{BerCor} in Corollary \ref{LiftEstimate} and \eqref{Es3_Temp}
  }\textcolor{Dgreen}{in Appendix \ref{APP_BESOV}}, 
\begin{align*}
    \left(\int^t_0\|f*\cR^\bba_jP_{t-s}\nabla_v\transOp_s\phi_N\|_{\bbp}^2 d  s\right)^{1/2}&{\black \lesssim 2^{j(1-\beta_2-\beta_1)} \|f\|_{\bB^{\beta_1}_{\bbp_1;\bba}}\left(\int_0^t((2^{2j}(t-s))^{-1}\wedge 1)^2\|\nabla_v\transOp_s\phi_N\|^2_{\bB^{\beta_2-1}_{\bbp_2;\bba}}\,ds\right)^{1/2}}\\
    &{\black \lesssim 2^{-j(\beta_2+\beta_1)}\|f\|_{\bB^{\beta_1}_{\bbp_1;\bba}}\sup_{s\in[0,t]}\|\transOp_s\phi_N\|_{\bB^{\beta_2}_{\bbp_2;\bba}}}\\
&\lesssim 2^{-j(\beta_2+\beta_1)}\|f\|_{\bB^{\beta_1}_{\bbp_1;\bba}}N^{\modulateorder(3\beta_2+{\ZXC \cA_{\bbp_1,\bbp}})}.
\end{align*}
Combining the above calculations, we get
$$
\|f*\cM^N_t\|_{L^m(\Omega;\bB^{{\beta},1}_{\bbp;\bba})}
\lesssim  \sum_{j\geq \textcolor{Dgreen}{0}}2^{\beta j}2^{-j(\beta_2+\beta_1)}\|f\|_{\bB^{\beta_1}_{\bbp_1;\bba}}N^{\modulateorder(3\beta_2+{\ZXC \cA_{\bbp_1,\bbp}})}.
$$
The result \eqref{S4:08:46} now follows by $\beta<\beta_2+\beta_1$. The estimate \eqref{S4:08:46} is deduced next by taking $f$ the Dirac measure on $0$, which lies in $\bB^{\beta_1}_{\bbp_1;\bba}$ for $\beta_1=0$ and $\bbp_1=\bbb1$.
\end{proof}
  
The previous lemma is \textcolor{Dgreen}{naturally} not enough for \textcolor{Dgreen}{the proof of Lemma \ref{lem:MN}} since it requires $\bbp\geq 2$. To drop this restriction, we use, as in \cite{OlRiTo-21}, weight function techniques  to show the following stronger estimate. The price we have to pay for this procedure is that we need uniform moment estimates on {\ZXC the process $Z^{N,1}_t$}.
\bt\label{08:thm01}
Let $\alpha=2$, $\beta\geq 0$\textcolor{Dgreen}{,} $\ZXC \bbp\in[1,\infty]^2$ with $p_{x}\wedge p_v<2$ and $\ell>\cA_{\bbp,\bbp\vee \textcolor{Dgreen}{\bb2}}$. For any {\ZXC $\beta_1>\beta$ and $m\ge2$}, there is a constant 
$C=C(\Theta,\textcolor{Dgreen}{\bbp,\ell,}\beta,\beta_1,m)>0$ such that, for any $N\ge 1$,
\begin{align}\label{08:06}
\sup_{t\in[0,T]}\|\cM^N_t\|_{L^m(\Omega;\bB^{\beta,1}_{\bbp;\bba})}
\le C  \left[1+{\ZXC \sup_{s\in[0,T]}(\mE\textcolor{Dgreen}{\big[}|Z^{N,1}_s|^{\ell m}_\bba\textcolor{Dgreen}{\big]})^{1/m}}\right]
N^{\modulateorder(3\beta_1+{\ZXC  \cA_{\textcolor{Dgreen}{\bbb1},\bbp\vee{\bf 2}}})-\frac12}.
\end{align}
\et
\begin{proof}
 {\ZXC By \textcolor{Dgreen}{the} embedding \eqref{Sob1}, without loss of generality, we may \textcolor{Dgreen}{(again)} assume $\bbp\in[1,\infty)^2$.}
 Note again from the Minkowski inequality that
\begin{align}\label{CB1}
\ZXC \|\cM^N_t\|_{L^m(\Omega;\bB^{\beta,1}_{\bbp;\bba})}
\le \sum_{j\textcolor{Dgreen}{\ge 0}}2^{\beta j} \|\cR^\bba_j\cM^{N}_t\|_{L^m(\Omega; \mL^\bbp)}.
\end{align}
Whenever $\bbp<\bb2$, one cannot directly make an estimate by BDG's inequality since $\mL^{\bbp/2}$
is not a Banach space. For simplicity of notation, we write $\bbp_1:=\bbp\vee \textcolor{Dgreen}{\bb2}$ and let $\bbp_2\in[1,\infty]^2$ be defined by $\frac{1}{\bbp}=\frac{1}{\bbp_1}+\frac{1}{\bbp_2}$.
To overcome the difficulty, we \textcolor{Dgreen}{now specifically} use a weight function: 
\begin{align}\label{Om1}
{\ZXC \omega_\ell(z):=\textcolor{Dgreen}{\big(}1+(1+|x|^2)^{\frac1{1+\alpha}}+|v|^2\textcolor{Dgreen}{\big)}^{\ell/2},\ \ z=(x,v)\in\mR^{2d}.}
\end{align}
\textcolor{Dgreen}{for $\ell>\bba\cdot\frac d{\bbp_2}=\cA_{\bbp,\bbp_1}$}. \textcolor{black}{The} \textcolor{Dgreen}{main properties and related key estimates on $\omega_\ell$ are stated in Appendix \ref{sec:WeightedEstimates}.}
Clearly, we have
$$
 \omega_\ell(z+z')\lesssim \omega_\ell(z)+\omega_\ell(z'), \ \ \textcolor{Dgreen}{z,z'\in\mathbb R^{2d},}
$$
\textcolor{Dgreen}{and t}he choice of $\ell$ also precisely guarantees the following integrability of the inverse weight,
$$\|\omega^{-1}_\ell\|_{\bbp_2}<\infty.$$
Since $\mL^{\bbp_1}$ is \textcolor{Dgreen}{(now)} a UMD space, by H\"older's inequality and BDG's inequality \textcolor{Dgreen}{\eqref{BDG} applied} {\ZXC for $\mL^{\bbp_1}$-valued martingale} (and introducing, for the application of the inequality, an appropriate stopped version of $\cR^\bba_j\cM^{N}_t$ as in the proof of Lemma \ref{S3:2/1:lem1}), we have
\begin{align}
\mE \textcolor{Dgreen}{\big[}\|\cR^\bba_j\cM^{N}_t\|_{\bbp}^m \textcolor{Dgreen}{\big]}
&\leq \mE\textcolor{Dgreen}{\big[}\big \|\cR^\bba_j\cM^{N}_t\omega_\ell\big\|_{\bbp_1}^m\|\omega^{-1}_\ell\|^m_{\bbp_2}\textcolor{Dgreen}{\big]}\no\\
&\lesssim N^{-m}
\mE\left\|\left(\sum_{i=1}^N\int^t_0|(\cR^\bba_jP_{t-s}\nabla_v\transOp_s\phi_N)(\transOp_{t-s}Z^{N,i}_s-\cdot)\omega_\ell|^2 d  s\right)^{1/2}\right\|^m_{\bbp_1}\no\\
&=N^{-m}\mE\left\|\sum_{i=1}^N\int^t_0|(\cR^\bba_jP_{t-s}\nabla_v\transOp_s\phi_N)(\transOp_{t-s}Z^{N,i}_s-\cdot)\omega_\ell|^2 d  s\right\|^{m/2}_{\bbp_1/2}\no\\
&\leq N^{-m}\mE\left(\sum_{i=1}^N\int^t_0\|(\cR^\bba_jP_{t-s}\nabla_v\transOp_s\phi_N)(\transOp_{t-s}Z^{N,i}_s-\cdot)\omega_\ell\|^2_{\bbp_1} d  s\right)^{m/2}.\label{LJ1}
\end{align}
Noting that for a nonnegative function $g:\mR^{2d}\to[0,\infty)$,
\begin{align*}
g(\transOp_{t-s}Z^{N,i}_s-\cdot)\omega_\ell
&\lesssim (g\omega_\ell)(\transOp_{t-s}Z^{N,i}_s-\cdot)+g(\transOp_{t-s}Z^{N,i}_s-\cdot)\omega_\ell(\transOp_{t-s}Z^{N,i}_s)\\
&\lesssim (g\omega_\ell)(\transOp_{t-s}Z^{N,i}_s-\cdot)(1+|Z^{N,i}_s|^\ell),
\end{align*}
by the translation  invariance of $\mL^\bbp$-norm, we have
\begin{align*}
\ZXC\|(\cR^\bba_jP_{t-s}\nabla_v\transOp_s\phi_N)(\transOp_{t-s}Z^{N,i}_s-\cdot)\omega_\ell\|_{\bbp_1}
\lesssim\|\textcolor{Dgreen}{\big(}\cR^\bba_jP_{t-s}\nabla_v\transOp_s\phi_N\textcolor{Dgreen}{\big)}\omega_\ell\|_{\bbp_1}(1+|Z^{N,i}_s|_{\textcolor{Dgreen}{\bba}}^\ell).
\end{align*}
Substituting this into \eqref{LJ1} and by $Z^{N,i}_s\overset{({\rm law})}{=} Z^{N,1}_s$, we further have\comm{From JF to S 01/04: Just the addition of the brackets for the expectation and our convention $m\ell \rightarrow \ell m$. (Same reasons apply below.)}
\begin{align}\label{CB2}
\begin{split}
\mE \textcolor{Dgreen}{\big[}\|\cR^\bba_j\cM^{N}_t\|_{\bbp}^m\textcolor{Dgreen}{\big]}
&\lesssim N^{-m/2}
\left(1+\sup_{s\in[0,t]}\mE\textcolor{Dgreen}{\big[}|Z^{N,1}_s|_\bba^{\textcolor{Dgreen}{\ell m}}\textcolor{Dgreen}{\big]}\right)
\left(\int^t_0\big\| \textcolor{Dgreen}{\big(}\cR^\bba_jP_{t-s}\nabla_v\transOp_s\phi_N\textcolor{Dgreen}{\big)} \omega_\ell\big\|^2_{\bbp_1} d  s\right)^{m/2}.
\end{split}
\end{align}

\textcolor{Dgreen}{
At this stage, we can apply Lemma \ref{WeightedHeatEstimate} from Appendix \ref{sec:WeightedEstimates} - with $\beta=\beta_1$, $\bbp=\bbp_1$ and $\gamma=1$ - to get, for any $j\ge 0$,
\begin{align*}
\|\big(\cR^\bba_jP_{t-s}\nabla_v\transOp_{s}\phi_N\big)\omega_\ell\|^2_{\bbp_1} 
\lesssim 2^{2j(1-\beta_1)}\big((2^{2j}(t-s))^{-1}\wedge 1\big)^2\|(\transOp_{s}\phi_N)\omega_\ell\|^2_{\bB^{\beta_1}_{\bbp_1;\bba}}.
\end{align*}
}
\textcolor{Dgreen}{Hence,} applying Lemma \ref{08:18}, \textcolor{Dgreen}{it follows that}
\begin{align*}
&\int^t_0\|\textcolor{Dgreen}{\big(}\cR^\bba_jP_{t-s}\nabla_v\transOp_s\phi_N\textcolor{Dgreen}{\big)}\omega_\ell\|^2_{\bbp_1} d  s\\
&\quad\lesssim 2^{2j(1-\beta_1)}  \int^t_0((2^{2j} (t-s))^{-1}\wedge 1)^2\|\omega_\ell\transOp_s\phi_N\|^2_{\bB^{\beta_1}_{\bbp_1;\bba}} d  s\\
&\quad\lesssim (1+(tN^{2\modulateorder})^{2\beta_1})N^{2\modulateorder(\beta_1+{\ZXC  \cA_{\textcolor{Dgreen}{\bbb1},\bbp_1}})}2^{2j(1-\beta_1)}
\int^t_0((2^{2j} s)^{-1}\wedge 1)^2 d  s\\
&\quad\lesssim N^{2\modulateorder(3\beta_1+{\ZXC  \cA_{\textcolor{Dgreen}{\bbb1},\bbp_1}})}2^{-2j\beta_1},
\end{align*}
using again \eqref{INT_DYAD_COMP} for the last inequality.
Substituting this into \eqref{CB2} and by \eqref{CB1}, we obtain
\begin{align*}
\|\cM^N_t\|_{L^m(\Omega;\bB^{\beta,1}_{\bbp;\bba})}
&\lesssim N^{-1/2}\left(1+{\ZXC\sup_{s\in[0,t]}(\mE\textcolor{Dgreen}{\big[}|Z^{N,1}_s|_\bba\textcolor{Dgreen}{\big]}^{\textcolor{Dgreen}{\ell m}}})^{1/m}\right)
\sum_j2^{\beta j}N^{\modulateorder(3\beta_1+{\ZXC  \cA_{\textcolor{Dgreen}{\bbb1},\bbp_1}})}2^{-j\beta_1},
\end{align*}
{\ZXC which immediately implies the estimate \eqref{08:06} by $\beta_1>\beta$.}
\end{proof}

To conclude the proof of Lemma \ref{lem:MN} for $\alpha=2$, it remains to establish the uniform moment estimates for the solution to the interacting particle system \eqref{S1:00} when $\bbp_0=\bbb1$. These estimates are given by the following lemma.
\bl
\label{08:29}
Let $\alpha=2$ and $\bbp_0=\bbb 1 $ and assume the condition {\bf (H)} holds. Then, for any  $m\in\mN$, 
there is a constant $C=C(\textcolor{Dgreen}{\Theta,m})>0$\comm{JF 01/04: Just to respect the convention of order: $\Theta$ first, the rest after.} such that for all $t\in(0,\Tup\textcolor{Dgreen}{\big]}$ and $N\ge 1$,
\begin{align}\label{AQ2}
\|b_t*u^N_t\|_{L^m(\Omega;\mL^{\textcolor{Dgreen}{\boldsymbol\infty}})}\le Ct^{-\frac{\Gap}{2}}+C\sup_{t\in[0,T_0]}\|b_t*\cM^N_t\|_{L^m(\Omega;\mL^{\textcolor{Dgreen}{\boldsymbol\infty}})}.
\end{align}
In particular,  
\begin{align}\label{AQ3}
\sup_{t\in[0,T_0]}\mE\textcolor{Dgreen}{\big[}|Z^{N,1}_t|_\bba^m\textcolor{Dgreen}{\big]}\lesssim 1+\mE\textcolor{Dgreen}{\big[}|Z^{N,1}_0|_\bba^m\textcolor{Dgreen}{\big]}+\sup_{t\in[0,T_0]}\|b_t*\cM^N_t\|^{\textcolor{Dgreen}{m}}_{L^m(\Omega;\mL^{\textcolor{Dgreen}{\boldsymbol\infty}})}.
\end{align}
\el
\begin{proof}
By \eqref{S1:P2} and  Duhamel's formula (with the notations of \eqref{Duh}), we have
\begin{align*}
u^N_t(z)=P_tu^N_0(z)+\int_0^t P_{t-s}\div_v G_s^{N}(z) d  s+\cM^N_t(z).
\end{align*}
By \eqref{Con} and \eqref{AD0446}, we thus get
\begin{align*}
\|b_t*u^N_t\|_{\mL^{\textcolor{Dgreen}{\boldsymbol\infty}}}
&\leq
\|b_t*P_tu^N_0\|_{\mL^{\textcolor{Dgreen}{\boldsymbol\infty}}}
+\int_0^t \|b_t*P_{t-s}\div_v G_s^{N}\|_{\mL^{\textcolor{Dgreen}{\boldsymbol\infty}}}
 d  s+\|b_t*\cM^N_t\|_{\mL^{\textcolor{Dgreen}{\boldsymbol\infty}}}\\
&\lesssim
\|b_t\|_{\bB^{\beta_b}_{\bbp_b;\bba}}\|P_tu^N_0\|_{\bB^{-\beta_b,1}_{\bbp'_b;\bba}}+\int_0^t \|b_t\|_{\bB^{\beta_b}_{\bbp_b;\bba}}\|P_{t-s}\div_v G_s^{N}\|_{\bB^{-\beta_b,1}_{\bbp'_b;\bba}} d  s
+\|b_t*\cM^N_t\|_ {\mL^{\textcolor{Dgreen}{\boldsymbol\infty}}}\\
&\lesssim
t^{\ZXC (\beta_b-\cA_{\bbb1,\bbp'_b})/2}\|u^N_0\|_{\bB^{0}_{\bbb1;\bba}}+\int_0^t (t-s)^{\ZXC (\beta_b-\cA_{\bbb1,\bbp'_b}-1)/2}\|G_s^{N}\|_{\bB^{0}_{\bbb1;\bba}} d  s+\|b_t*\cM^N_t\|_ {\mL^{\textcolor{Dgreen}{\boldsymbol\infty}}}.
\end{align*}
Observing as well  that 
$$
\|u^N_0\|_{\bB^0_{\bbb1;\bba}}\lesssim\|u^N_0\|_{{\bbb1}}=\|\mu^N_t*\transOp_t\phi_N\|_{{\bbb1}}=1,
$$
and, from \eqref{DEF_G},
\begin{align*}
|G_s^{N}(z)|\le \|b_s*u^N_s\|_{\mL^{\textcolor{Dgreen}{\boldsymbol\infty}}} 
u^N_s(z)\Rightarrow\|G_s^{N}\|_{\bbb1}\leq\|b_s*u^N_s\|_{\mL^{\textcolor{Dgreen}{\boldsymbol\infty}}}.
\end{align*}
From  the above computations, using again \eqref{AB2} and from \eqref{DEF_GAP}, 
$$
\cA_{\bbb1,\bbp'_b}-\beta_b\le \cA_{\bbb1,\bbp'_b}-\beta_b-\beta_0=\Lambda<1,
$$
 we have, {\ZXC for all $t\in(0,T_0]$,}
\begin{align*}
\|b_t*u^N_t\|_{\mL^{\textcolor{Dgreen}{\boldsymbol\infty}}}
\lesssim t^{-\frac{\Gap}{2}} +\int_0^t (t-s)^{-\frac{1+\Gap}{2}}\|b_s*u^N_s\|_{\mL^{\textcolor{Dgreen}{\boldsymbol\infty}}}
 d  s+\|b_t*\cM^N_t\|_{\mL^{\textcolor{Dgreen}{\boldsymbol\infty}}}.
\end{align*}
Therefore,
$$
\|b_t*u^N_t\|_{L^m(\Omega;\mL^{\textcolor{Dgreen}{\boldsymbol\infty}})}\lesssim t^{-\frac{\Gap}{2}}+ \|b_t*\cM^N_t\|_{L^m(\Omega;\mL^{\textcolor{Dgreen}{\boldsymbol\infty}})}
+\int_0^t (t-s)^{-\frac{1+\Gap}{2}}\|b_s*u^N_s\|_{L^m(\Omega;\mL^{\textcolor{Dgreen}{\boldsymbol\infty}})} d  s.
$$
Since $\Gap<1$, by Gronwall's inequality of Volterra-type (see Lemma \ref{0214:lem00}
), we obtain \eqref{AQ2}.
 
In particular, by \eqref{S1:00}, we have, for any {\color{black}$m\ge 0$},
\begin{align*}
\sup_{t\in[0,T_0]}\mE\textcolor{Dgreen}{\big[}|Z^{N,1}_t|_\bba^m\textcolor{Dgreen}{\big]}\lesssim 1+\mE\textcolor{Dgreen}{\big[}|Z^{N,1}_0|_\bba^m\textcolor{Dgreen}{\big]}+\int_0^{T_0}\mE\textcolor{Dgreen}{\big[}\|b_s*u^N_s\|^m_{\mL^{\textcolor{Dgreen}{
			\boldsymbol{\infty}}}}\textcolor{Dgreen}{\big]} d  s.
\end{align*}
Estimate \eqref{AQ3} now follows by \eqref{AQ2}.
\end{proof}

 \br
 \ZXC
For $\bbp_0\neq \bbb1$, \textcolor{Dgreen}{let us point out that }we have a direct treatment for $\|b_t*u^N_t\|_{\mL^{\textcolor{Dgreen}{\boldsymbol{\infty}}}}$, but with \textcolor{black}{a worse} convergence rate. Indeed, by \eqref{Con} and \eqref{S1:02}, 
\begin{align*}
\|b_t*u^N_t\|_{\mL^{\textcolor{Dgreen}{
		\boldsymbol{\infty}}}}&\lesssim  \|b_t\|_{\bB^{\beta_b}_{\textcolor{Dgreen}{\bbp_b};\bba}}
	\|u^N_t\|_{\bB^{-\beta_b,1}_{\textcolor{Dgreen}{\bbp_b'};\bba}}=\|b_t\|_{\bB^{\beta_b}_{\textcolor{Dgreen}{
			\bbp_b};\bba}}\|\mu^N_t*\transOp_t{\phi_N}\|_{\bB^{-\beta_b,1}_{\textcolor{Dgreen}{\bbp_b'};\bba}}\\
&\le \|b_t\|_{\bB^{\beta_b}_{\textcolor{Dgreen}{\bbp_b};\bba}}\|\mu^N_t\|_{\bB^{0}_{
	\bbb1;\bba}}\|\transOp_t{\phi_N}\|_{\bB^{-\beta_b,1}_{\textcolor{Dgreen}{\bbp_b'};\bba}}\lesssim \|\transOp_t{\phi_N}\|_{\bB^{-\beta_b+\frac \varepsilon3}_{\textcolor{Dgreen}{\bbp_b'};\bba}}\lesssim N^{\zeta(-3\beta_b+\varepsilon+\bba \cdot \frac d{\bbp_b})}
\end{align*}
using \eqref{AB2} for the last but one inequality and Lemma \ref{08:18} for the last one.
 \er
 
 
{\ZXC Next we turn to the treatment of $\alpha\in(1,2)$. In this case, we need to use a \textcolor{Dgreen}{specific martingale} inequality established in \cite{Hausenblas-11}
.}
We first \textcolor{Dgreen}{state the following estimate}.
\bl\label{Le304}
For any $\beta\in\mR$, $\bbp\in[1,\infty]^2$ and $0\leq \eps\leq\theta<1$, there is a constant $C>0$ such that for all $t>0$, $v\in\mR^d$ and
$f\in \bB^{\beta+2\eps}_{\bbp;\bba}$,
$$
\|P_t\delta^{(1)}_{(0,v)}f\|_{\bB^{\beta,1}_{\bbp;\bba}}\le C  \big[\big(|v|^\theta t^{-\frac {\theta-\eps}\alpha}\big)\wedge 1\big]
\|f\|_{\bB^{\beta+2\eps}_{\bbp;\bba}}.
$$
\el
\begin{proof}
Let $\bbp\in[1,\infty]^2$. By \eqref{AB2} and \textcolor{Dgreen}{the heat kernel estimate} \eqref{AD0446}, we have, for any $\gamma\geq 0$,
\begin{align*}
\big\|P_t\delta^{(1)}_{(0,v)}f\big\|_{\bB^{\beta,1}_{\bbp;\bba}}\lesssim\big\|P_t\delta^{(1)}_{(0,v)}f\big\|_{\bB^{\beta+\eps}_{\bbp;\bba}}
\lesssim  t^{-\frac \gamma\alpha}\|\delta^{(1)}_{(0,v)}f\|_{\bB^{\beta-\gamma+\eps}_{\bbp;\bba}}.
\end{align*}
On the other hand, by \textcolor{Dgreen}{the equivalent form of }\eqref{CH1}, we have, for any $\beta\in\mR$ and $\gamma\in[0,1)$,
\begin{align*}
\|\delta^{(1)}_{(0,v)}f\|_{\bB^{\beta}_{\bbp;\bba}}&=\sup_{j\geq 0}2^{j\beta}\|\cR^\bba_j\delta^{(1)}_{(0,v)}f\|_{\bbp}
=\sup_{j\geq 0}2^{j\beta}\|\delta^{(1)}_{(0,v)}\cR^\bba_jf\|_{\bbp}\\
&\lesssim |v|^\gamma\sup_{j\geq 0}2^{j\beta}\|\cR^\bba_jf\|_{\bB^{\gamma}_{\bbp;\bba}}\lesssim|v|^\gamma\|f\|_{\bB^{\beta+\gamma}_{\bbp;\bba}}.
\end{align*}
Combining the above two estimates, we obtain
$$
\big\|P_t\delta^{(1)}_{(0,v)}f\big\|_{\bB^{\beta,1}_{\bbp;\bba}}\lesssim\|f\|_{\bB^{\beta+\eps}_{\bbp;\bba}}
$$
and for any $0\leq \eps\leq\theta<1$,
$$
\big\|P_t\delta^{(1)}_{(0,v)}f\big\|_{\bB^{\beta,1}_{\bbp;\bba}}\lesssim t^{-\frac {\theta-\eps}\alpha}\|\delta^{(1)}_{(0,v)}f\|_{\bB^{\beta-\theta+2\eps}_{\bbp;\bba}}
\lesssim |v|^\theta t^{-\frac{\theta-\eps}\alpha}\|f\|_{\bB^{\beta+2\eps}_{\bbp;\bba}}.
$$
The proof is complete.
\end{proof}
Now we can show the following estimate.
 	\bt\label{thm:MartPart-Stable} Let $T>0$, $1<\alpha<2$ and $\bbp =(p_x,p_v)\in(1,\infty]^2$ with
 	$$
 	\alpha<q:=2\wedge p_x\wedge p_v.
 	$$
 	Then, for any $\beta\ge 0$, $m\ge 1$ and $\epsilon>0$,
 	there exists $C=C(\textcolor{Dgreen}{T,}\alpha,\bbp,\beta,m\textcolor{Dgreen}{,\eps})>0$ such that for all $N\textcolor{Dgreen}{\ge 1}$,
\begin{align}\label{ZH1}
 	\sup_{t\in [0,T]}\|\cM^N_t\|_{L^m(\Omega;\bB^{\beta,1}_{\bbp;\bba})} \le C   N^{\frac1q-1+\zeta((\alpha+1)\beta+\cA_{\bbb1,\bbp}+\eps)}.
\end{align}
    	\et
    		As a preliminary to the proof of Theorem \ref{thm:MartPart-Stable}, let us highlight that applying the functional BDG inequality \eqref{BDG} previously used for the case Brownian case $\alpha=2$ would naturally bring rather intricate moment issues. \textcolor{Dgreen}{It is more natural to rely on a specific martingale inequality which fits the case of integrals with compensated Poisson measures. As we refer the interested reader to \cite[Section 5]{MarRoc-14} for \textcolor{black}{a} historical account of maximal inequalities for such integrals, we will make use below of the particular estimate \textcolor{black}{established in}  \cite{Hausenblas-11}.}
    		\bt[\cite{Hausenblas-11}, Theorem 2.13 and Corollary 2.14]\label{thm:Hausenblas-BDG} Let $(E,|\cdot|_E)$ be a separable Banach space of martingale type $p$ with $1<p\le 2$, $(Z,\mathcal Z)$ be a measurable space and $\nu$ be a positive $\sigma$-finite measure on $Z$. Let $(\Omega,\mathcal F,(\mathcal F_t)_{t\ge 0},\mathbb P)$ be a complete filtered probability space with right continuous filtration. Assume that $\tilde \eta$ is a compensated time homogeneous Poisson random measure on $Z$ over  $(\Omega,\mathcal F,(\mathcal F_t)_{t\ge 0},\mathbb P)$ with intensity $\nu$. Then, for any progressively measurable process $\xi:\R_+\times Z\times\Omega\rightarrow E$ such that, for some $n\ge 1$,
    		$$
    		\mathbb E\Big[\int_0^\infty\int_{Z} |\xi(t,z)|_E^p\nu(dz)\, d  t+\int_0^\infty\int_{Z} |\xi(t,z)|_E^{p^n}\nu(dz)\, d  t\Big]<\infty
    		$$	
    		we have
    		\begin{equation}\label{BDG-Jump}
    			\mathbb E\Big[\sup_{0\le t\le T}\Big{|}\int_0^t\int_{Z}\xi(s,z)\widetilde{\eta}(ds,dz)
    			\Big{|}^{p^n}_E\Big]\le C  \sum_{k=1}^n\mathbb E\Bigg[\Big(\int_0^T\int_Z|\xi(s,z)|_E^{p^k}\nu(dz)\, d  s\Big)^{p^{n-k}}\Bigg],
    		\end{equation}
    		for some positive constant $C=C(\textcolor{Dgreen}{E,p,n})$.
    		\et
    		As for the notion of UMD spaces, the definition of Banach space of martingale type $p$ is recalled in \textcolor{Dgreen}{Appendix} \ref{sec:Type}. Therein, we also establish that, under the assumption $\alpha<1\wedge p_x\wedge p_v$, the inequality  \eqref{BDG-Jump} applies to the case $E=\mL^{\bbp}$ (see \textcolor{Dgreen}{Corollary} \ref{MixedBanachType}).
\begin{proof}[Proof of Theorem \ref{thm:MartPart-Stable}]
\textcolor{Dgreen}{As in the previous proof of this section, a}ccording to the embedding \eqref{Sob1}, we may assume $\bbp\in(1,\infty)^2$ without loss of generality.
As $\alpha\in(1,2)$, by \eqref{MART_FIELD_I}, \eqref{MMN} and \eqref{Duh}, we can write
	\begin{align*}
 		\cM^{N}_{t}(z)=\frac1{N}\sum_{i=1}^N\int_0^t\!\!\int_{\mR^d\setminus\{0\}} \xi^i_t(s,v_{\textcolor{Dgreen}i})(z)
		\tilde{\mathcal{N}}^{i}( d  s, d  v_{\textcolor{Dgreen}i}),
 	\end{align*}
	where (using again the \textcolor{Dgreen}{difference} operator \eqref{Dif1}),
	$$
	\xi^i_t(s,v)(z):=P_{t-s}\Big(\delta^{(1)}_{(0,v)}\transOp_{s}\Phi_N\Big)(Z^{N,i}_{s-}-\cdot)(z).
	$$
To use appropriately the inequality \eqref{BDG-Jump}, we first need to lift $\cM^{N}$ on the product space $\mR^{Nd}_0:=\mR^{Nd}\setminus\{0\}$ as follow\textcolor{Dgreen}{s}: For
	\begin{align*}
\boldsymbol{L}^N_t:=(L^{\alpha,1}_t,\cdot\cdot\cdot, L^{\alpha,N}_t), 
\end{align*}
the overall noise driving the particle system \eqref{S1:00}, let ${\boldsymbol\cN}^N((0,t],U)$ denote the jump measure of  $\boldsymbol{L}^N$ and $\widetilde{\boldsymbol\cN}^N$ the related compensated measure, respectively defined as: for all $t\in(0,T]$ and  $U  \in\sB(\mR^{Nd}_0)$, 
$$
{\boldsymbol\cN}^N((0,t],U):=\sum_{0<s\le t}\ind_{U}(\Delta \boldsymbol{L}^N_s),\ \ 
{\widetilde{\boldsymbol\cN}^N}( d  s, d  \boldsymbol{v}):={\boldsymbol\cN}^N( d  s, d  \boldsymbol{v})-\boldsymbol{\nu}( d \boldsymbol{v})\textcolor{Dgreen}{ d  s},
$$
where $\boldsymbol{\nu}( d \boldsymbol{v})$ is the L\'evy measure of $\boldsymbol{L}^N$. Since the $L^{\alpha,i}$ are independent, their jumps $\Delta L^{\alpha,i}\neq 0$ never occur at the same time and 
$\boldsymbol{\nu}$ and $\boldsymbol{\cN}^N$ admit the following representations:
\begin{align}\label{JumpDecomposition}
&\boldsymbol{\nu}( d \boldsymbol{v})=\sum_{i=1}^N\delta_{\{0\}}( d  v_1)\cdots\delta_{\{0\}}( d  v_{i-1})\nu( d  v_i)\delta_{\{0\}}( d  v_{i+1})\cdots\delta_{\{0\}}( d  v_N),\\
&\boldsymbol{\cN}^N(ds, d \boldsymbol{v})=\sum_{i=1}^N\delta_{\{0\}}( d  v_1)\cdots\delta_{\{0\}}( d  v_{i-1}) \cN^i(ds, d  v_i)\delta_{\{0\}}( d  v_{i+1})\cdots\delta_{\{0\}}( d  v_N),\nonumber
\end{align}
for $\delta_{\{0\}}$ the Dirac measure in $0$. In particular, \textcolor{Dgreen}{for any $i$, }since $\xi^i_t(s,0)(z)=0$ 
\textcolor{Dgreen}{ and since the measure ${\widetilde{\boldsymbol\cN}^N}$ only supports one jump at \textcolor{black}{a given}  time, we have:
\begin{align*}
&\int_{\textcolor{Dgreen}{\mR^{Nd}_0}} {\xi}^i_t(s,v_i)(z){\widetilde{\boldsymbol\cN}^N}( d  s, d  \boldsymbol{v})\\
&=\sum_{j=1,j\neq i}^N
\int_{\mR^{d}_0} {\xi}^i_t(s,0)(z)\ind_{\{v_j\neq 0\}}{\widetilde{\cN}^j}( d  s, d  v_{\textcolor{Dgreen}j})+\int_{\mR^{d}_0} {\xi}^i_t(s,v_i)(z){\widetilde{\cN}^i}( d  s, d  v_{\textcolor{Dgreen}i})\\
&=\int_{\mR^{d}_0} {\xi}^i_t(s,v_i)(z){\widetilde{\cN}^i}( d  s, d  v_{\textcolor{Dgreen}i}).
\end{align*}
}
As such, if we next introduce the predictable process
$$
\boldsymbol{\xi}^N_t(s, \boldsymbol{v})(z):=\frac1{N}\sum_{i=1}^N\xi^i_t(s,v_i)(z), \ \ 0\le s\le t\le T, z\in\mathbb R^d,
$$
then $\cM^{N}_{t}(z)$ can be written as
$$
\cM^{N}_{t}(z)
=\int^t_0\!\!\int_{\mR^{Nd}_0}\boldsymbol{\xi}^N_t(s, \boldsymbol{v})(z){\widetilde{\boldsymbol\cN}^N}( d  s, d  \boldsymbol{v}).
$$
Since $\mL^{\bbp}$ is a space of martingale $q$-type (recall that $\textcolor{Dgreen}{q\in (\alpha,2\wedge p_x\wedge p_v]}$),  
 applying Theorem \ref{thm:Hausenblas-BDG} to the stopped martingale
 $$
 \cM^{N}_{u,t}(z)=\int^u_0\!\!\int_{\mR^{Nd}_0}\boldsymbol{\xi}^N_t(s, \boldsymbol{v})(z){\widetilde{\boldsymbol\cN}^N}( d  s, d  \boldsymbol{v}),\quad u\in[0,t],
 $$
 we have: for any  $n\in\mN$,
\begin{align}\label{MartTemp}
\Big(\E\Big[\|\cM^N_t\|^{q^n}_{\bB^{\beta,1}_{\bbp;\bba}}\Big]\Big)^{1/q^n} 
	&\leq\sum_{j\geq 0}2^{\beta j}\Bigg(\E\Big[ \|\cR^\bba_j\cM^{N}_t\|^{q^n}_{\mL^\bbp}\Big]\Bigg)^{1/q^n}\le \sum_{j\geq 0}2^{\beta j}\Bigg(\E\Big[\sup_{0\le u\le t} \|\cR^\bba_j\cM^{N}_{u,t}\|^{q^n}_{\mL^\bbp}\Big]\Bigg)^{1/q^n}\nonumber\\
	&\quad\quad= \sum_{j\geq 0}2^{\beta j}\Bigg(\E\Big[ \sup_{0\le u\le t}\Big{\|}\int^u_0\!\!\int_{\mR^{Nd}_0}\cR^\bba_j\boldsymbol{\xi}^N_t(s, \boldsymbol{v})(z){\widetilde{\boldsymbol\cN}^N}( d  s, d  \boldsymbol{v})\Big{\|}^{q^n}_{\mL^\bbp}\Big]\Bigg)^{1/q^n}\nonumber\\
	&\lesssim\sum_{j\geq 0}2^{\beta j} 
	\Bigg(\sum_{k=1}^n\mathbb E\Bigg[\left(\int_0^t\!\!\!\int_{\mR^{Nd}_0}\|\cR^\bba_j\boldsymbol{\xi}^N_t(s, \boldsymbol{v})\|_{\mL^{\bbp}}^{q^k}\boldsymbol{\nu}( d \boldsymbol{v})\,  d  s\right)^{q^{n-k}}\Bigg]\Bigg)^{1/q^n}.
\end{align}
According to \eqref{JumpDecomposition}\textcolor{Dgreen}{,
$$
\int_{\mR^{Nd}_0}\|\cR^\bba_j\boldsymbol{\xi}^N_t(s, \boldsymbol{v})\|_{\mL^{\bbp}}^{q^k}\boldsymbol{\nu}( d \boldsymbol{v})
=\frac 1{N^{q^k}}\sum_{i=1}^N\int_{\textcolor{Dgreen}{\mR^{d}_0}}\|\cR^\bba_j \xi^i_t(s, v_i)\|_{\mL^{\bbp}}^{q^k}\nu( d  v_i),
$$
}
\textcolor{Dgreen}{and so}
\begin{align*}
\mathbb E\Bigg[\left(\int_0^t\!\!\!\int_{\mR^{Nd}_0}\|\cR^\bba_j\boldsymbol{\xi}^N_t(s, \boldsymbol{v})\|_{\mL^{\bbp}}^{q^k}\boldsymbol{\nu}( d \boldsymbol{v})\, d  s\right)^{q^{n-k}}\Bigg]
=\frac1{N^{q^n}}\mathbb E\Bigg[\left(\sum_{i=1}^N\int_0^t\!\!\!\int_{\mR^{d}_0}\|\cR^\bba_j\xi^i_t(s,v)\|_{\mL^{\bbp}}^{q^k}\nu( d  v)\, d  s\right)^{q^{n-k}}\Bigg].
\end{align*}
\textcolor{Dgreen}{Using successively the invariance by translation of $\|\cdot\|_{\mL^{\bbp}}$ and \textcolor{black}{Lemmas \ref{SemigroupEstimate}} and  \ref{Le304} further gives
\begin{align*} \|\cR^\bba_j\xi^i_t(s,v)\|_{\mL^{\bbp}}&=\left\|\textcolor{Dgreen}{\cR^\bba_j}P_{t-s}\Big(\delta^{(1)}_{(0,v)}\transOp_{s}\Phi_N\Big)(Z^{N,i}_{s-}-\cdot)\right\|_{\mL^{\bbp}}=\left\|\textcolor{Dgreen}{\cR^\bba_j}P_{t-s}\Big(\delta^{(1)}_{(0,v)}\transOp_{s}\Phi_N\Big)\right\|_{\mL^{\bbp}}\\
&\lesssim 2^{-(\beta+\epsilon)j}\|P_{t-s}\delta^{(1)}_{(0,v)}\transOp_{s}\Phi_N\|_{\bB^{\beta+\varepsilon,1}_{\bbp;\bba}}\lesssim 2^{-(\beta+\epsilon)j}
\big[\big(|v|^\theta (t-s)^{-\frac \theta\alpha}\big)\wedge 1\big]
\|\Gamma_s\phi_N\|_{\bB^{\beta+3\eps}_{\bbp;\bba}}.
\end{align*}}
Hence, {\black recalling (see e.g. \cite{HRZ23}) the isotropic $\alpha$-stable L\'evy measure  $\nu( d  v)=c_{d,\alpha}|v|^{-d-\alpha} d  v$,} \textcolor{Dgreen}{plugging the above into \eqref{MartTemp} yields}
\begin{align*}
	&\Big(\mE\left[\|\cM^{N}_{t}\|_{\bB^{\beta,1}_{\bbp;\bba}}^{q^n}\right]\Big)^{1/q^n}\\
	&\lesssim\frac 1{N}\sup_{s\in[0,t]}\|\Gamma_s\phi_N\|_{\bB^{\beta+3\eps}_{\bbp;\bba}}\sum_{j\ge 0}2^{-\eps j}\Bigg(\sum_{k=1}^n 
	\Big(N\int_0^t\!\!\!\int_{\mR^{d}_0}\frac{\big(|v|^\theta (t-s)^{-\frac {\theta-\eps}\alpha}\big)^{q^k}\wedge 1}{|v|^{d+\alpha}} d  v  d  s\Big)^{q^{n-k}}\Bigg)^{1/q^n}\\
	&\lesssim\frac {N^{1/q}}{N} \sup_{s\in[0,t]}\|\Gamma_s\phi_N\|_{\bB^{\beta+3\eps}_{\bbp;\bba}}\Bigg(\sum_{k=1}^n
	\left(\int_0^t(t-s)^{\frac{\eps}\theta-1}\int_{\mR^{d}_0}\frac{|v|^{\theta q^k}\wedge 1}{|v|^{d+\alpha}} d  v  d  s\right)^{q^{n-k}}\Bigg)^{1/q^n}.
\end{align*}
Since $q\in(\alpha,2]$, one can choose $\theta\in(\frac\alpha q,1)$ so that the remaining time-integrals on the right-hand side are finite for each $k=1,\cdots,n$. Thus, we obtain
\begin{align}\label{AC:pre02}
    \Big(\mE\left[\|\cM^{N}_{t}\|_{\bB^{\beta,1}_{\bbp;\bba}}^{q^n}\right]\Big)^{1/q^n}\lesssim  N^{1/q-1}\sup_{s\in[0,t]}\|\Gamma_s\phi_N\|_{\bB^{\beta+3\eps}_{\bbp;\bba}}.
\end{align}
Now for any $m\in\mN$, one can choose $n$ large enough so that $m\leq q^n$. Thus, by H\"older's inequality and \eqref{Es3},
$$
\|\cM^{N}_{t}\|_{L^m(\Omega;\bB^{\beta,1}_{\bbp;\bba})}\lesssim N^{1/q-1}\sup_{s\in[0,t]}\|\Gamma_s\phi_N\|_{\bB^{\beta+\textcolor{black}{3}\eps}_{\bbp;\bba}}
\lesssim N^{1/q-1+\zeta((\alpha+1)(\beta\textcolor{Dgreen}{+3\varepsilon})+
	\cA_{\bbb1,\textcolor{Dgreen}{\bbp}})}.
$$
The proof is complete{\black .}
\end{proof}

{\ZXC
\begin{proof}[Proof of Lemma \ref{lem:MN}]
We separately consider \textcolor{Dgreen}{the cases} $\alpha=2$ and $\alpha\in(1,2)${\color{black}, and, through H\"older's inequality, assume $m\ge 1$}.\\
\noindent
{\bf (Case $\alpha=2$).}
\textcolor{Dgreen}{Fix} $\eps>0$. By Lemma \ref{S3:2/1:lem1} \textcolor{Dgreen}{(with $\beta=0$, $\beta_1=\beta_b$ , $\bbp=\boldsymbol\infty$ and $\bbp_2=\bbp_b$)}, we have, 
\begin{align}\label{AM1}
\|b_t*\cM^N_t\|_{L^m(\Omega;\mL^{{\boldsymbol{\infty}}})}\lesssim N^{-\frac12+\modulateorder(\cA_{\bbp_b,\boldsymbol{\infty}}-3\beta_b+\eps)}\|b_t\|_{\bB^{\beta_b}_{\bbp_b;\bba}}.
\end{align}
In particular, if $\zeta\textcolor{Dgreen}{\le}1/(2(\cA_{\bbp_b,\infty}-3\beta_b\textcolor{Dgreen}{+\eps}))$
, then\comm{JF 25/03: The point of this estimate is only for the control of moments in the case $\bbp_0=\bbb1$; it will be relevant to mention it only at this place.}
\begin{align}\label{AM2}
\sup_{t\in[0,T_0]}\sup_N\|b_t*\cM^N_t\|_{L^m(\Omega;\mL^{\textcolor{Dgreen}{\boldsymbol{\infty}}})}\lesssim1.
\end{align}
If $\bbp_0\textcolor{Dgreen}{> \bb2}$\comm{S. and JF 23/03: The case $\bbp_0=\bb2$ is not anymore relevant.}, then by \eqref{S4:08:46} with $\beta=0$\textcolor{Dgreen}{, $\beta_2=\textcolor{black}{\eps/3}$} and $\bbp=\bbp_0$, we have
$$
\|\cM^N_t\|_{L^m(\Omega;\bB^{0,1}_{\bbp_0;\bba})} \lesssim N^{-\frac12+\modulateorder(\textcolor{black}{\eps}+\cA_{\bbb1,\bbp_0})}.
$$
Hence,
by definition of \textcolor{Dgreen}{the norm $\|\cdot\|_{\mS^{\beta}_t(b)}$} and \eqref{AM1}, we have
\begin{align}
\left\|\cM^N_t\right\|_{L^m(\Omega;\mS^{\beta}_t(b))} 
&\textcolor{Dgreen}{=} t^{\frac{\beta-\beta_0}{\alpha}}\|\cM^N_t\|_{L^m(\Omega;\bB^{0,1}_{\bbp_0;\bba})}+t^{\frac{\beta+\Gap}{\alpha}}\|b_t*\cM^N_t\|_{L^m(\Omega;\mL^{\textcolor{Dgreen}{\boldsymbol{\infty}}})}\label{AM5}\\
 &\lesssim N^{-\frac12+\modulateorder(\textcolor{black}{\eps}+\cA_{\bbb1,\bbp_0})}+N^{-\frac12+\modulateorder(\cA_{\bbp_b,\textcolor{Dgreen}{\boldsymbol{\infty}}}-3\beta_b+\eps)}\no\\
& \lesssim N^{\textcolor{Dgreen}{-\frac12}+\modulateorder((\cA_{\bbp_b,{\textcolor{Dgreen}{\boldsymbol{\infty}}}}-3\beta_b)\vee\cA_{\bbb1,\bbp_0}+\eps)}.\no
\end{align}
If $\bbp_0=\textcolor{Dgreen}{\bbb1}$, then, by \eqref{08:06} \textcolor{Dgreen}{(for $\beta=0$, \textcolor{black}{$\beta_1=\varepsilon/3$}, $\bbp=\bbb1$)}, 
\begin{align*}
\|\cM^N_t\|_{L^m(\Omega;\bB^{0,1}_{\bbb1;\bba})}
&\lesssim\left[1+\sup_{s\in[0,T]}(\mE|Z^{N,1}_s|_\bba^{\textcolor{Dgreen}{\ell m}})^{1/m}\right]N^{\modulateorder(\eps+{\ZXC  \cA_{\bbb1, {\bf 2}}})-\frac12}\\
&\!\!\!\!\!\!\!\!\!\stackrel{\eqref{AQ3}, \eqref{AM2}}\lesssim\left[1+(\mE|Z^{N,1}_0|_\bba^{\textcolor{Dgreen}{\ell m}})^{1/m}\right]N^{\modulateorder(\eps+{\ZXC  \cA_{\bbb1, {\bf 2}}})-\frac12}.
\end{align*}
Substituting this into \eqref{AM5} and by \eqref{AM1}, we obtain 
\begin{align*} 
\|\cM^N_t\|_{L^m(\Omega;{\color{black}\mS^{\beta}_t(b)})}
&\lesssim \left[1+(\mE|Z^{N,1}_0|_\bba^{\textcolor{Dgreen}{\ell m}})^{1/m}\right]N^{\modulateorder(\eps+{\ZXC  \cA_{\bbb1, \bb2}})-\frac12}
+N^{-\frac12+\modulateorder(\cA_{\bbp_b,\boldsymbol{\infty}}-3\beta_b+\eps)}\\
&\lesssim \left[1+(\mE|Z^{N,1}_0|_\bba^{{\ell m}})^{1/m}\right]N^{\modulateorder((\cA_{\bbp_b,\boldsymbol{\infty}}-3\beta_b)\vee \cA_{\bbb1, {\bf 2}}+\eps)-\frac12}.
\end{align*}
{\color{black}Combining the cases $\bbp_0>\boldsymbol{2}$ and $\bbp_0=\bbb1$ yields, for $\alpha=2$,
\begin{equation}\label{FinalMartingaleEstim-1}
\|\cM^N_t\|_{L^m(\Omega;\mS^{\beta}_t(b))}\lesssim\left[1+(\mE|Z^{N,1}_0|_\bba^{{\ell m}})^{1/m}\right]
N^{-\frac12+\modulateorder((\cA_{\bbb1, \bbp_0\vee {\bf 2}}\vee(\cA_{\bbp_b,\boldsymbol{\infty}}-3\beta_b)+\eps)}.
\end{equation}
}
{\bf (Case $\alpha\in(\textcolor{black}{1},2)$).} \textcolor{black}{By} \eqref{Con}, we have
\begin{align*}
\left\|\cM^N_t\right\|_{L^m(\Omega;\mS^{\beta}_t(b))}&\textcolor{Dgreen}{=t^{\frac{\beta-\beta_0}{\alpha}}\|\cM^N_t\|_{L^m(\Omega;\bB^{0,1}_{\bbp_0;\bba})}+t^{\frac{\beta+\Gap}{\alpha}}\|b_t*\cM^N_t\|_{L^m(\Omega;\mL^{\textcolor{Dgreen}{\boldsymbol{\infty}}})}} \\
&\lesssim t^{\frac{\beta-\beta_0}{\alpha}}\|\cM^N_t\|_{L^m(\Omega;\bB^{0,1}_{\bbp_0;\bba})}+t^{\frac{\beta+\Gap}{\alpha}}\|\cM^N_t\|_{L^m(\Omega;\bB^{-\beta_b,1}_{\bbp_b';\bba})}.
\end{align*}
Since $p_{x,0}\wedge p_{v,0}>\alpha$ and $\bbp_0\leq\bbp_b'$, one can choose $(\beta,\bbp)=(0,\bbp_0)$ and $(-\beta_b,\bbp_b')$ separately in \textcolor{Dgreen}{Estimate} \eqref{ZH1} so that \textcolor{Dgreen}{ the exponent $q$ dominating the rate in $N$ is given by} $q=p_{x,0}\wedge p_{v,0}\wedge 2$\textcolor{Dgreen}{, and}
\begin{align}\label{FinalMartingaleEstim-2}
\left\|\cM^N_t\right\|_{L^m(\Omega;\mS^{\beta}_t(b))} 
&\lesssim N^{\frac1q-1+\zeta(\cA_{\textcolor{Dgreen}{\bbb1},\bbp_0}+\eps)}+N^{\frac1q-1+\zeta(\cA_{\textcolor{Dgreen}{\bbb1},\bbp'_b}-(1+\alpha)\beta_b+\eps)}\\
&\lesssim N^{\frac1q-1+\zeta(\cA_{\textcolor{Dgreen}{\bbb1},\bbp_0}\vee(\cA_{\bbp_b,\textcolor{Dgreen}{\boldsymbol{\infty}}}-(1+\alpha)\beta_b)+\eps)}.
\end{align}
\textcolor{Dgreen}{This} complete\textcolor{Dgreen}{s} the proof.
 \end{proof}
 
 }

{\color{black}
\section{Applications to Coulomb and Riesz type interaction kernel (and a remark on the non-degenerate case)}
\label{sec:4}
In this section, we apply Theorems \ref{S1:main01} and \ref{main01} - and derive the best achievable convergence rates - to the case of the following Riesz-type singular interaction kernel:
$$
b_t(x,v)=-\gamma K_M(x,v)=-\gamma\nabla_x |x|^{s-d}\chi(M^{-1}v),\quad s\in(0,d), \ d\ge 3.
$$
The function $\chi$ defines here a non-negative compactly support $\mathcal C^\infty(\mathbb R^d)$ cut-off function satisfying  $\chi(v) = 1$ for all $|v| \le 1$ and $M\in(0,\infty)$ a re-scaling factor (which at the formal limit $M\rightarrow \infty$ reduces the cut-off to the unity). The force field $-\gamma\nabla_x|x|^{s-d}$ models a Riesz-type (or Coulombic-type) interaction kernel. The factor $\gamma$ models an intensity or re-normalizing parameter whose sign defines the attractive or repulsive behavior of the kernel (we indistinguishably consider both cases, assuming only $\gamma\neq 0$ hereafter). The exponent $s$ measures the strength of the interaction; larger is $s$, milder is the singularity of the kernel. The case $s=2$ specifically corresponds to a Coulomb force,  the range $s\in(2,d)$ refers to as a sub-critical regime  and $s\in(0,2)$ as a super-critical regime (see e.g. \cite{Serfaty-20}, \cite{Serfaty-24} for a more detailed discussion on the underlying physics and interests related to these regimes). The corresponding McKean-Vlasov SDE
\begin{align}\label{VPFP-McKV}
\begin{cases}
		\begin{aligned}
			X_t &= \xi^1 + \int_0^t V_s\, ds, \\
			V_t &= \xi^2 - \gamma\int_0^t \big(K_M * \mu_s(Z_s))\mathrm{d}s + L_t^{\alpha},\quad Z_t=(X_t,V_t), \  \mu_t=\cL(Z_t),
		\end{aligned}
\end{cases}
\end{align}
provides a probabilistic interpretation of the fractional Vlasov–Poisson–Fokker–Planck (VPFP) like 
equation:
\begin{align}\label{VPFP}
	\partial_t f = \Delta^{\alpha/2}_v f - v \cdot \nabla_x f + \gamma\mathrm{div}_v\big((K_M * f) f\big).
\end{align}
The case $s=2$ without cut-off (i.e. at the formal limit $M\rightarrow \infty$) particularly corresponds to the fractional VPFP equation ($K_\infty* f$ describing then an electric force field with charge $\int f_t(x,v)\,dv$ defining the non-linearity in \eqref{VPFP} through $-\div_x K_\infty*f(x)=\int f(x,v)\,dv$). 

The moderately interacting particle system for \eqref{VPFP-McKV} is given by
\begin{align}\label{VPFP-Particle}
	\begin{cases}
		\begin{aligned}
			X_t^{N,i} &= \xi^1_i + \int_0^t V_s^{N,i} \,ds, \quad i = 1,2,\dots,N, \\
			V_t^{N,i} &= \xi^2_i - \gamma\int_0^t \big(K_M * \Gamma_t \phi_N * \mu_s^N\big)(Z_s^{N,i})\,ds + L_t^{\alpha,i},\\
            Z_t^{N,i} &= (X_t^{N,i}, V_t^{N,i}),\, \mu^N_t=\frac1{N}\sum_{j=1}^N\delta_{\{Z^{N,j}_t\}}.
		\end{aligned}
	\end{cases}
\end{align}
The initial states are hereafter assumed to be independent copies of $(X_0,V_0)$ and to satisfy the moments condition $\mathbb E[|(X_0,V_0)|_\bba^m]<\infty$, $\forall m\in\mathbb N$.

For the sake of clarity, we also assume, from now on, that $\mu_0\in\mathcal P(\mathbb R^{2d})\cap \bB^{0^{-},1}_{\bbp_0;\bba}$ (i.e. $\beta_0\simeq 0$)\footnote{Recalling the embedding \eqref{AB2}, up to a slight shift in $\bbp_0$ this amounts to assuming $\mu_0$ admits a density $u_0\in \mL^{\bbp_0}$.} and the regime $\bbp_0>\boldsymbol{\alpha}$ (as pointed out in Remark \ref{RieszRestriction} below, the case $(\bbp_0,\alpha)=(\bbb1,2)$, is too restrictive to ensure well-posedness of \eqref{VPFP-McKV} for a singularity range $s\in(0,d)$). In this setting, by extending the proof arguments of \cite{HRZ23}, Section 5, the weak wellposedness of \eqref{VPFP-McKV} holds, for any $s\in(1,d\wedge \frac{d}{\alpha}+1)$ and appropriate bounds for $\bbp_0$, see ${\bf (H_{VFPw})}$ below.\footnote{We shall highlight that the presence of the cut-off $\chi$ avoids that the condition $1/\bbp_{v,0}+1/\bbp_{v,b}\ge 1$ in {\bf (H)} restrict the integrability range in the velocity component.} The weak and moderate propagation of chaos in Theorems \ref{S1:main01} and \ref{main01} then hold and a common optimal convergence rate (see \eqref{RieszStableOptimalRate}) can be achieved for particular thresholds for $\bbp_0$. This rate notably expands, to stable-driven kinetic dynamics, the ones established in the seminal paper \cite{OlRiTo-21} and the recent paper \cite{CGH-25} for first-order (non-degenerate) moderately interacting particle systems.

\br For the sake of brevity, we purposely left aside the question of the strong wellposedness of \eqref{VPFP-McKV} and the best achievable rate for the pathwise propagation of chaos in Theorem \ref{S1:main01}. These particular applications of our results can nevertheless be obtained by identifying the characteristic mixed anisotropic Besov space related to $K_M$ (following again \cite{HRZ23}) and under mild additional constraints on $\bbp_0$ and $\alpha$, the optimal rate \eqref{RieszBrownianOptimalRate} is expected to remain valid. 
\er
While propagation of chaos for second-order particle system with singular interaction kernels has been steadily growing (see e.g. \cite{JW16}, \cite{Serfaty-20}, \cite{BJS-25} and references therein), mean-field for first-order (non-degenerate) particle systems with singular kernels remains a center of attention and has made tremendous advances over the past years. Propagation of chaos for particle systems with Riesz-type interaction kernels (including Biot-Savart and Keller-Segel kernels) have been remarkably established, through modulate energy controls in \cite{Serfaty-20} (in a very broad deterministic setting) or through entropy methods in \cite{JW18}, \cite{BJW-23}. Closer to our {\it moderately interacting} setting, quantitative {\it moderate} propagation of chaos for (non-degenerate) moderately interacting particle systems for Riesz kernels in the range $s\in(1,d)$ have been broadly established in \cite{OlRiTo-21}. In the sub-critical case $s\in(2,d)$, the subsequent paper \cite{CHJ-26} addressed a refined fluctuation analysis of the dynamics and \cite{CGH-25} established weak (entropic-based) propagation of chaos result. Although further comparison of our results with recent weak propagation of chaos for non-mollified particle approximation related VFKP equations may be drawn - we can cite again to \cite{BJS-25} and reference therein for such results -, this would, as discussed in Remark \ref{RK_RATES}, emphasized our own result \eqref{AA200} as yet-to-be improved. Rather comparison with the papers \cite{OlRiTo-21} and \cite{CGH-25} enable to highlight the impact particular kinetic and stable scalings at the core of this paper.

		\paragraph{{\bf Weak wellposedness of \eqref{VPFP-McKV}}.} As established in Lemma~A.3 and Proposition~A.4 in \cite{HRZ23} (see also 
		Lemma~5.1 therein for the special case $s=2$), Riesz-type interaction kernels identify as anisotropic Besov functions with, provided $s\in(1,d)$, 
		\begin{equation}\label{RieszAnistropicBesov}
		\nabla|x|^{s-d} \in \bB^{(\alpha+1)(\frac{d}{p_{x,b}} - d +s- 1)}_{(p_{x,b},\infty);\bba}\quad \text{for any $p_{x,b} \in(\tfrac{d}{d-s+1}, \infty]$.} 
		\end{equation}

		Since $v \mapsto \chi(M^{-1}v)$ is smooth and compactly supported, the cut-off in the velocity argument gives $L^p(\mathbb R^d)$- integrability at all orders, and, so
		for any $p_{v,b} \in [1,\infty]$ - and $p_{x,b}$ as above - 
		\begin{equation}\label{RieszAnistropicBesov-bis}
		-\gamma K_M\in \bB^{(\alpha+1)(\frac{d}{p_{x,b}} - d +s- 1)}_{(p_{x,b},p_{v,b});\bba}.
		\end{equation}
		The (negative) regularity index of $K_M$ being given by 
		\begin{equation}\label{BesovIndexRiesz}
		\beta_b=(\alpha+1)(\tfrac d{p_{x,b}}-d+s-1),
		\end{equation}
		the coefficient $\Gap$ in \eqref{DEF_GAP} (in the case $\beta_0\simeq 0$) writes as
		\begin{align}\label{GapKineticApplied}
		\Lambda&=\bba\cdot\tfrac{d}{\bbp_0}+\tfrac{d}{p_{v,b}}+\tfrac{(1+\alpha)d}{p_{x,b}}-\beta_b-(2+\alpha)d=\bba\cdot\tfrac{d}{\bbp_0}+\tfrac{d}{p_{v,b}}-(\alpha+1)(s-1)-d.  
		\end{align}
 where $\tfrac{d}{p_{v,b}}-d=-\tfrac{d}{p'_{v,b}}$ and $p'_{v,b}$ can be arbitrary chosen in $[p_{v,0},\infty]$.
		According to {\bf (H)}, the weak wellposedness of \eqref{VPFP-McKV} - on a finite time interval $[0,T_0]$ - is hence ensured under the conditions 
		\begin{equation*}
		{\bf (H_{VFPw})}\qquad 
		\begin{aligned}
		&\Lambda\in \big(0,\alpha-1) \Leftrightarrow \tfrac{d}{p_{x,0}}+\tfrac{1}{\alpha+1}\Big(\tfrac{d}{p_{v,0}}-\tfrac{d}{p'_{v,b}}\Big)+1-s\in \Big(0,\tfrac{\alpha-1}{\alpha+1}\Big),\\
		&  \tfrac{1}{\bbp_0}+\tfrac{1}{\bbp_b}\ge \bbb1 \Rightarrow 1\le p_{x,0}\le p'_{x,b}<(\tfrac{d}{d-s+1})'=\tfrac{d}{s-1},
		\end{aligned}
		\end{equation*}
		and, in the regime, $\bbp_0> \boldsymbol{\alpha}$, under the additional restriction for $s$: 
		\begin{equation}\label{SingularityRange}
			\tfrac{d}{s-1}>\alpha\Rightarrow 1<s<d\wedge \big(\tfrac{d}{\alpha}+1\big).
		\end{equation}
			This setting covers sub-critical regimes (up to $d\wedge (\frac{d}\alpha+1)$), the special Coulomb case $s=2$ (for e.g. $p_{v,0}=$ that $p_{x,0}$ and $p'_{v,b}$ satisfies $\frac d{p_{x,0}}-\frac{d}{p'_{v,b}}\in \big(1,1+\frac{\alpha-1}{\alpha+1}\big)$
		and the supercritical regime $s\in(1,2)$ (e.g. for $p'_{v,b}=p_{v,0}$ and $p_{x,0}$ large enough).  
		\begin{remark}\label{RieszRestriction} For $(\bbp_0,\alpha)=(\bbb1,2)$, ${\bf (H_{VFPw})}$ is not enough to ensure the wellposedness of \eqref{VPFP-McKV} as this condition impose to restrict the exponent $s$ to
			$$
			d+\tfrac{d}{(\alpha+1)p_{v,b}}-s+1\in(0,\tfrac{1}3)\Leftrightarrow s\in\big(d+\tfrac{d}{(\alpha+1)p_{v,b}}+\tfrac{2}3,d+\tfrac{d}{(\alpha+1)p_{v,b}}+1\big)
			$$
			which sits outside the range $(1,d)$. 
		\end{remark}
	 \paragraph{{\bf Weak and moderate propagation of chaos for \eqref{VPFP-Particle}.}} Under the setting ${\bf (H_{VFPw})}$, the moderate propagation of chaos in Theorem \ref{main01} and the weak propagation of chaos stated in Theorem \ref{S1:main01} apply. Notably, maximizing along the mollification parameter $\zeta$ (as in \eqref{GeneralOptimalRate} in Remark \ref{RK_RATES}), the quantities $\sup_{t\in[0,T_0]}\|u^N_t-u_t\|_{L^m(\Omega;\mS^\beta_t(b))}$ and
	 $\|\cL(Z_t)
	 -\cL(Z^{N,1}_t)\|_{\rm var}$ are bounded by the principal rate
	 \begin{equation}\label{ModerateRateRiesz}
	 N^{-\beta \zeta} + N^{\mstable_\alpha-1 +\theta_\alpha\zeta + \eps}= N^{-\frac{\beta}{\beta+\theta_\alpha}(1-\mstable_\alpha-\epsilon)},
	 \end{equation}
		where (recalling $\Gap$ in \eqref{GapKineticApplied})
        \begin{align*} 
			\mstable_\alpha=\tfrac1{2\wedge p_{x,0}\wedge p_{v,0}},
		\end{align*}
        $\beta\in(0,\bar{\beta}_\alpha)$ with
		\begin{equation*}
			\bar{\beta}_\alpha=
				1-(1+\alpha)\big(\tfrac{d}{p_{x,0}}-s+1\big)-\big(\tfrac{d}{p_{v,0}}-\tfrac{d}{p'_{v,b}}\big),\ \text{for}\quad\alpha=2,
		\end{equation*}
        and for $\alpha\in(1,2)$,
		\begin{equation*}
			\bar{\beta}_\alpha=\big[\alpha-1-(\alpha+1)\Big(\tfrac{d}{p_{x,0}}-s+1\big)-\big(\tfrac{d}{p_{v,0}}-\tfrac{d}{p'_{v,b}}\Big)\big]\wedge \Big[\tfrac1{2}\Big(\alpha-(\alpha+1)\big(\tfrac{d}{p_{x,0}}-s+1\big)-\big(\tfrac{d}{p_{v,0}}-\tfrac{d}{p'_{v,b}}\big)\Big)\Big],
		\end{equation*}
		 and eventually, recalling $\beta_b$ in \eqref{BesovIndexRiesz}, and, that from ${\bf (H_{VFPw})}$ $1/p'_{v,0}\le 1/p_{v,b}$ and $1/p'_{x,0}\le 1/p_{x,b}$, where
		\begin{align*} 
		\theta_\alpha&=\big[\tfrac{(1+\alpha)d}{p'_{x,0}}+\tfrac{d}{p'_{v,0}}\big]\vee\big[\tfrac{d}{p_{v,b}}+(1+\alpha)\tfrac{d}{p_{x,b}}-(\alpha+1)\beta_b\big]=\tfrac{d}{p_{v,b}}+(1+\alpha)\tfrac{d}{p_{x,b}}-(\alpha+1)\beta_b\\
		&=
				\big[\tfrac{(1+\alpha)d}{p'_{x,0}}+\tfrac{d}{p'_{v,0}}\big]\vee
				\big[\tfrac{d}{p_{v,b}}-\alpha(1+\alpha)\tfrac{d}{p_{x,b}}+(\alpha+1)^2(d-s+1)\big]\\
				&=\tfrac{d}{p_{v,b}}-\alpha(1+\alpha)\tfrac{d}{p_{x,b}}+(\alpha+1)^2(d-s+1).
		\end{align*}
The optimal rate in \eqref{ModerateRateRiesz} is obtained by taking  $\beta$ up to its admissible limit $\bar{\beta}_\alpha$ (i.e. $\beta=(\bar\beta_\alpha)^-$), maximizing the latter while minimizing $\mstable_\alpha$ and $\theta_\alpha$. This is achieved by taking $p'_{v,b}=p_{v,0}$, choosing $p_{x,b}$ close to its minimal value $\frac{d}{d-s+1}$ (see \eqref{RieszAnistropicBesov}), 
and $p_{x,0}$ close to its largest admissible value ($p_{x,0}\simeq \frac{d}{s-1}$). This yields, for any $\alpha\in(1,2]$,
$$
\bar{\beta}_\alpha\simeq (\alpha-1)\wedge\tfrac{\alpha}2= \alpha-1,\quad \mstable_\alpha\simeq \tfrac{1}{2\wedge (\tfrac d{s-1})\wedge p_{v,0}},\quad 
\theta_\alpha\simeq\tfrac{d}{p_{v,b}}+(\alpha+1)(d-s+1).
$$
		Finally, optimizing the remaining indices $p_{v,0}$ and $p_{v,b}$ to achieve the final balance between $\mstable_\alpha$ and $\theta_\alpha$ follows by taking $p_{v,0}\simeq \alpha$, which implies $ p_{v,b}\simeq \frac{\alpha}{\alpha-1}$ (and recalling from \eqref{SingularityRange} that $s<d\wedge(\frac{d}\alpha+1)$), we then have
		$$
		\bar{\beta}_\alpha\simeq \alpha-1,\quad 
		\mstable_\alpha\simeq \tfrac{1}{(\tfrac{d}{s-1})\wedge \alpha}=\tfrac1{\alpha},\quad \theta_\alpha\simeq \tfrac{(\alpha-1)d}{\alpha}+(\alpha+1)(d-s+1),
		$$
		For all $s\in\big(1,d\wedge(\frac{d}\alpha+1)\big)$,
		this yields to the optimal convergence rate for \eqref{ModerateRateRiesz}: $N^{-\varrho}$ where  
		\begin{equation}\label{RieszStableOptimalRate}
		\varrho=\Big(\frac{(\alpha-1)}{(\alpha-1)+ (\alpha-1)d/\alpha+(\alpha+1)(d-s+1)}(1-\frac{1}{\alpha})\Big)^-,
		\end{equation}
	The optimal exponent \eqref{RieszStableOptimalRate} carries the maximal value possible for $1-\mstable_\alpha$ ($1/2$ for the Gaussian regime, $1-1/\alpha$ for the pure jump regime) and  the maximal value possible for the parameter $\beta$, which, recalling Remark \ref{RK_RATES} measures the smoothness of $\mathcal L(X_t,V_t)$. The expression $d-s+1$ reflects the interaction range of the Riesz force $\nabla |x|^{s-d}=x|x|^{s-d-2}$ and the sensibility to the exponent $s$. The factor $(\alpha+1)$ particularly reflects the characteristic kinetic scaling of the model (and the fact that the interaction kernel acts on the degenerate position argument). As will be discussed in a few lines, this provides a kinetic analogue of the rate established in \cite{OlRiTo-21} and \cite{CGH-25}. Eventually, the remaining element $(\alpha-1)d/\alpha$ may be viewed as the impact of the singular kernel in the velocity direction and also present a particular trade-off between $\theta_\alpha$ and $\mstable_\alpha$; minimizing $\frac{d}{p_{v,b}}$ in $\theta_\alpha$ requires maximizing $p_{v,b}$ and as $p_{v,0}\le p'_{v,b}$, increase $\mstable_\alpha$.  
		
	\paragraph{{\bf Comparison with first-order (non-degenerate) systems \eqref{RieszBrownianOptimalRate}}} In \cite{OlRiTo-21}, the authors established a moderate propagation of chaos for the moderately interacting particle system 
	$$
	V^{N,i}_t = \xi_i + \int_0^t F\Big(K^N * \overline{\mu}^N_s(V^{N,i}_s)\Big) d  s + \sqrt{2}W^{i}_t,\quad \overline{\mu}^N_t=\frac1{N}\sum_{i=1}^N\delta_{\{V^{N,i}_t\}},
	$$
    where $K^N(v):=K*\psi_N(v)$ with some mollifier $\psi_N(v):=N^{d\zeta}\psi(N^{\zeta}v)$ and $\zeta>0$,
	towards the first order non-degenerate (autonomous) dynamic: 
		$$
		d  V_t = K * \overline{\mu}_t(V_t) d  t + \sqrt{2}W_t,\quad \overline{\mu}_t=\mathcal L(V_t), \,0\le t\le T.
		$$
	The field $F$ define here a cut-off function, flat-off outside the ball $\mathbb B(0,\sup_t\|K*\mu_t\|_{L^\infty})$ and $K=K(v)$ is a general interaction kernel satisfying mild integrability and mild assumption. For the case of the Riesz-type force $K(v)=-\gamma\nabla |v|^{s-d}$ (with $\gamma\neq 0$,  $s\in(1,d)$ and $d\ge 1$), the authors notably established quantitative moderate propagation of chaos results for  
	$$
	\|\sup_{t\in[0,T]}\|K^N*\bar{\mu}^N_t-\overline{\mu}_t\|_{L^1\cap L^r}\|_{L^m(\Omega)}
	$$
	with different optimal convergence rates (see \cite{OlRiTo-21}, Section 5.2):
	
	$\bullet$ for $s\in(2,d)$ and $r\ge \frac{d}{s-1}$,  $N^{-\bar{\varrho}}$, $\bar{\varrho}=\Big(\frac{1}{1+d/2+(d-s+2-d/2)\vee 0}\times\frac1{2}\Big)^-$
	
	$\bullet$ for $s\in(1,2]$ and $r=\infty$, $N^{-\big(\frac1{(1+d)}\times\frac1{2}\big)^-}$.
	
	In the sub-critical regime $s\in(1,d)$, using alternative entropy estimate (and for $F=1$), the authors in \cite{CGH-25} derived an alternative weak propagation of chaos result with an optimal rate  $N^{-\tilde{\varrho}}$, $\tilde{\varrho}=\big(\frac{1}{1+(d-s+1)}\times \frac1{2}\big)^-$. 
	
	For $\alpha=2$, $d\ge 3$ and $s\in(1,\frac{d}2+1)$, the rate \eqref{RieszStableOptimalRate}:
		\begin{equation}\label{RieszBrownianOptimalRate}
		N^{-\varrho},\quad\varrho= \Big( \frac{1}{1+d/2+3(d-s+1)}\times\frac{1}2\Big)^-,
		\end{equation}
	can be viewed as a consistent, up-scaled to second-order dynamics, version of the above rates. The coefficient $\frac1{2}$, characteristic of the Gaussian limit regime, appear as a common factor in all rates and the front multiplier reflects the singularity (specially the dimension-dependency) of the Riesz-type force. From first-order to second order dynamics, for the subcritical range $s\in(2,d)$, the interaction range $d-s+1$ exhibited in \cite{CGH-25} is increased by the kinetic factor $3$ ($=\alpha+1$ for $\alpha=2$) while, for the Coulombic and super-critical range $s\in(1,2)$, the bound $1+3(d-s+1)\ge 1+3d$ is consistent the component $1+d$ in \cite{OlRiTo-21}. While the rate \eqref{RieszBrownianOptimalRate} continuously spans super- to sub-critical regimes, the additional component $d/2$ in the front multiplier \eqref{RieszBrownianOptimalRate} ($d/\alpha$ in \eqref{RieszStableOptimalRate}) and the bound $s\le \frac{d}{2}+1$ are currently inherent to our current setting (notably to the conditions $\boldsymbol{\alpha}<\bbp_0$ and $p_{v,0}\le p'_{v,b}$) and should be improved in future works. 

\br\label{rem:NonDegenerateCase}
In the case where $b=b_t(v)$, the position component $X_t$ becomes auxiliary and the kinetic system \eqref{MV1} reduces to the first-order non-degenerate McKean–Vlasov SDE:
\begin{align}\label{AC:MVS}
dV_t = (b_t * \overline{\mu}_t)(V_t),dt + d\Levy^{\nalpha}_t,
\qquad \overline{\mu}_t = \mathcal L(V_t), \quad 0 \le t \le T .
\end{align}
Since $b$ is independent of $x$, the interaction kernel $b$, which in the general setting lies in the anisotropic Besov space $\bB^{\beta_b}_{(\infty,p_{v,b});\bba}$, simply belongs here to the usual isotropic Besov space $\bB^{\beta_b}_{p_b}$ (see \cite[Proposition A.4]{HRZ23}).

Taking $p_{x,b}=\infty$ and $p_{v,b}=p_b$ (so that $\bba\cdot \tfrac{d}{\bbp_b}=\tfrac{d}{p_b}$),
 the weak well-posedness condition {\bf (H)} simplifies as follows:
\begin{enumerate}[{\bf (H$_{non})$}]
	\item  \textit{$b=b_t(v)\in L^\infty(\mR_+;\bB^{\beta_b{\color{black},\infty}}_{p_b})$ and $ \overline{\mu}_0=\overline{\mu}_0(\dif v)\in \mathcal P(\R^{{\color{black}d}}) \cap \bB^{\beta_0{\color{black},\infty}}_{{\color{black}p_0}}$ with $p_0\in (\alpha,\infty]$, 
		$p_b\in[1,\infty{\color{black}]}$ satisfying $\frac1{p_0}+\frac1{p_b}\ge1$, and $\beta_b\le0$ and $\beta_0\in(-1,0)$ {are such that}
		\begin{align}
			0<{\Gap}:=\tfrac{d}{p_0}+\tfrac{d}{p_b}-d-\beta_0
			-\beta_b
			<\alpha-1.\label{DEF_GAP_auto}
	\end{align}}
\end{enumerate}
Under these assumptions, the weak well-posedness criterion of \cite{HRZ23} applies (with $p_{x,b}=\infty$). In particular, there exists a finite time horizon $T_0=T_0(\Theta)$ such that \eqref{AC:MVS} admits a unique weak solution on $[0,T_0]$ and, for all $t\in(0,T_0]$, the marginal law $\bar{\mu}_t,$ admits a smooth density $\overline{u}_t$ satisfying regularity estimates analogous to \eqref{S2:01},  
now formulated in isotropic Besov norms (see \cite[Theorem 1.6-(i)]{HRZ23}).
Furthermore, strong uniqueness holds provided that ${\Gap}$ in \eqref{DEF_GAP_auto} sits in the interval $(0,\frac3{2}\alpha-1)$. 

Likewise, the corresponding interacting particle system \eqref{S1:00} rewrites as
\begin{align}\label{AC:IPS}
	V^{N,i}_t = \xi_i + \int_0^t (b^N_s * \overline{\mu}^N_s)(V^{N,i}_s) d  s + \Levy^{\nalpha,i}_t,\quad \overline{\mu}^N_s=\frac1{N}\sum_{i=1}^N\delta_{\{V^{N,i}_t\}},
\end{align}
where the related mollified interaction simplifies into
\begin{equation*}
	b^N_t(v) := b_t * \transOp_t \phi_N(v) = b_t * \psi_N(v),
\end{equation*}
where $\psi_N(v) = \int_{\mR^d} \transOp_t \phi_N(x,v) d  x
	= N^{\zeta d} \int_{\mR^d} \phi(x, N^{\zeta} v) d  x
	=: N^{\zeta d} \psi(N^\zeta v)$ defines a smooth, compactly supported probability density function on $\mathbb R^d$.

In this framework, the moderate propagation of chaos between \eqref{AC:IPS} and \eqref{AC:MVS} is measured by the convergence of $\Psi_N*\bar\mu^N$ towards the limiting law $\overline{\mu}_t$ under the “isotropic and $\mathbb R^d$-reduced" version of the norm $\mS^\beta_t(b)$ (defined in \eqref{BBT}):
$$
\|f\|_{S^\beta_t(b))}:=(1\wedge t)^{\frac{\beta-\beta_0}{\alpha}}\|f\|_{\bB^{\beta_0,\infty}_{p_0}}+(1\wedge t)^{\frac{\beta+\Gap}{\alpha}}\|b_t*f\|_{L^\infty},\quad f=f(v)\in\bB^{0,1}_{p_0}\cap \bB^{-\beta_b,1}_{p'_b}.
$$
Replicating the starting proof arguments of Theorem \ref{main01}, as for \eqref{S1:P2}, one obtains a corresponding non-degenerate SPDE for $\overline{\cU}^N_t(v):=\Psi_N*\overline{\mu}^N_t(v)-\overline{\mu}_t(v)=\int \cU^N_t(x,v)\,dx$ and, as for \eqref{DIFF_SPDE}, one obtains the Duhamel formula
$$
\bar{\cU}^N_t(v)=\int P_t(u_0-u^N_0)(x,v) \,dx-\int_0^t \int P_{t-s}\div_v H^{N}_s(x,v)\,dx d s-\int \int_0^t P_{t-s} d  M^N_s(x,v)\,dx.
$$
Due to the integration in $x$, the action of the semigroup $P_t$ reduces to the action of the semigroup of $L^\alpha$ and the dependency in the transported mollifier $\Gamma_t\phi_N(x,v)$ simplifies into $\Psi_N(v)$, especially in the martingale part \eqref{MART_FIELD_I}. While the two first elements of the above Duhamel decomposition can be estimated as in Lemmas \ref{098} and \ref{lem:HN}, the key improvement in the non-degenerate case emerge with Lemma \ref{lem:MN}: in contrast to the scaling in Lemma~\ref{08:18}, the absence of the factor $\Gamma_t$ yields
$\|\phi_\lambda\|_{\bB^{\beta}_{\bbp;\bba}} \lesssim \lambda^{\beta + \cA_{\bbb1,\bbp}}$ and significantly reduce the characteristic bounds in Theorems \ref{08:thm01} and \ref{thm:MartPart-Stable}, respectively from $3\beta_1$ to $\beta_1$ and $(1+\alpha)\beta$ to $\beta$.
   As a result, in the non-degenerate case, instead of \eqref{limitstable}, we have the parameters $m_\alpha$ and $\overline{\theta}_\alpha$ - corresponding to $\mstable_\alpha$ and $\theta_\alpha$ for $p_{x,b}=\infty$, $p_{x,0}=1$ and $(1+\alpha)\beta\rightarrow \beta_b$ in \eqref{limitstable}:
   \begin{align*}
	m_\alpha=\tfrac{1}{(p_0\wedge 2)\vee \alpha},
	\quad \overline{\theta}_\alpha={\cA_{\bbb1,(1,p_0\vee \alpha)}\vee \big[\cA_{(\infty,p_b),\boldsymbol{\infty}}-\beta_b\big]}=(d - \tfrac{d}{p_0}) \vee (-\beta_b + \tfrac{d}{p_b}),
\end{align*}
where the factor $(1+\alpha)\beta_{b}$ in $\theta_\alpha$ from \eqref{limitstable} is replaced by $\beta_b$.  

    Finally, we can expect the following result:
\begin{theorem}\label{AC:main01}
Suppose that {\bf (H$_{non}$)} holds and that either $p_0>\alpha$ or $(\alpha,p_0)=(1,2)$. Let $\{\xi_i\}_{i=1}^N$ be as in \eqref{AC:IPS}, a family of i.i.d. $\mR^{2d}$-valued random variables with common law $\overline{\mu}_0$ satisfying $\mathbb E[|V_0|^m]<\infty$ for all $m\in\mathbb N$. 
Then, for any $\beta,\zeta>0$ such that
\begin{align}\label{AC:BetaRange}
\beta < (\alpha - 1 - \Lambda) \wedge \tfrac{\alpha + \beta_0 - \Lambda}{2}, 
\qquad \zeta < (1-m_\alpha)/\overline{\theta}_\alpha, 
\end{align}
and for any $\eps>0$, $m>0$,
there exists a constant $C = C(\Theta,\zeta,\beta,\eps,m) > 0$ such that, for all $N \ge 1$,
\begin{align*}
\sup_{t \in [0,T_0]} \|\psi_N*\overline{\mu}^N_t - \overline{\mu}_t\|_{L^m(\Omega;S^\beta_t(b))}
\le C N^{-\beta\zeta} + C N^{-1+m_\alpha + \zeta \overline{\theta}_\alpha} + \eps.
\end{align*} 
\end{theorem}
Related weak and pathwise propagation of chaos between $V$ and $V^{N,1}$ can be derived, adapting the proof arguments of Theorem \ref{S1:main01}. This result applies to a variety of non-degenerate models, including the fractional two-dimensional vortex Navier–Stokes equations, the surface quasi-geostrophic equation, and the Keller–Segel model initially addressed in \cite{HRZ23}.

Due to space limitations, we will provide the full details in a forthcoming paper.
\er
}

 \appendix
\renewcommand{\thesection}{\Alph{section}.\arabic{section}}
\renewcommand{\theequation}{\thesection.\arabic{equation}}

\setcounter{section}{0}

\begin{appendices}

		\section{Weighted estimates}\label{sec:WeightedEstimates}
		\textcolor{Dgreen}{\textcolor{black}{In this section, we state and prove} some essential properties on the weighted $\mL^\bbp$-controls relative to the block operators $\cR^\bba_j$ defined in \eqref{Ph0}, for a particular family of weight functions $\omega_\ell$, specified in \eqref{Om1-bis} below. We also establish controls for the kinetic semigroup $P_t$ defined in \eqref{SCL11}. These estimates are notably meaningful to handle the particular setting $\bbp_0=\bbb1$ in the main Theorems \ref{S1:main01} and \ref{main01} (notably for Theorem \ref{08:thm01}), and some intermediate results in the subsequent appendices \ref{APP_BESOV} and \ref{CONV_EMP_SAMP_MU0}.  \\
	  The results presented in this section consist of slight extensions of previous statements from \cite{HZZZ22} and \cite{HRZ23} (and also references therein), but, for the sake of \textcolor{black}{completeness, we present here the full proofs}. }
		
		\textcolor{Dgreen}{	
			Consider the class of weight functions, given by: For $\ell\ge 0$,  
			\begin{align}\label{Om1-bis}
			 \omega_\ell(z):=\textcolor{Dgreen}{\big(}1+(1+|x|^2)^{\frac1{1+\alpha}}+|v|^2\textcolor{Dgreen}{\big)}^{\ell/2},\ \ z=(x,v)\in\mR^{2d}.
			\end{align}
			\textcolor{Dgreen}{This weight function is $C^\infty$ and satisfies the following three properties relative to derivatives, growth and integrability control and to finite difference:\comm{JF 27/03: There is a slight difference with the growth property stated after \eqref{Om1} - here we precise the bound of the growth, but not after \eqref{Om1} ... Recheck if this is important in the core of the article.}} 
			\begin{equation}\label{ClassWeight}
			|\nabla^k \omega_\ell(z)|=|(\nabla_x^k \omega_\ell(z),\nabla_v^k \omega_\ell(z))|\lesssim \omega_\ell(z),\ \ k\in\mathbb N, z\in\mathbb R^{2d}, 
			\end{equation}
			\begin{equation}\label{WeightGrowth}
				\omega_\ell(z+z')\leq (2^{\ell-1}\vee 1)(\omega_\ell(z)+\omega_\ell(z'))\textcolor{Dgreen}{\le 2(2^{\ell-1}\vee 1)\omega_\ell(z)\omega_\ell(z')},\ z,z'\in\mathbb R^{2d},
			\end{equation}
			and, for all $\bbp\in[1,\infty]^2$, such that $\ell>\mathcal \bba\cdot \tfrac d{\bbp}$,
				\begin{equation}\label{WeightIntegrability}
				\|(\omega_\ell)^{-1}\|_{\bbp}<\infty.
				\end{equation}
				}		
		\textcolor{Dgreen}{
		Given $\omega_\ell$ as above, we shall consider the quantities 
		$$
		\|(\cR^\bba_jf)\omega_\ell\|_{\bbp}, \ \ j\ge 0, \bbp \in[1,\infty]^2, f\in\cS'(\mathbb R^{2d}).
		$$
		These are naturally related to the weighted anisotropic Besov space:
		$$
		B^{s,q}_{\bbp;\bba}(\omega_\ell):=\left\{f\in \cS': \|f\|_{\mathbf{B}^{s,q}_{\bbp;\bba}(\omega_\ell)}
		:= \left(\sum_{j\geq0}\big(2^{ js}\| (\cR^\bba_{j}f)\omega_\ell\|_{\bbp}\big)^q\right)^{1/q}<\infty\right\},
		$$
		for $s\ge 0$, $\bbp\in[1,\infty]^2$ and $q\in[1,\infty]$. Theorem 2.7 in \cite{HZZZ22} previously established, for any $f\in\cS'$, the equivalence between the norms  $\| f\|_{\mathbf{B}^{s,q}_{\bbp;\bba}(\omega_\ell)}$ and $\|\omega_\ell f\|_{\mathbf{B}^{s,q}_{\bbp;\bba}}$ in the special case where $\bbp=(p,p)\in[1,\infty]^2$. This equivalence is stated for a broad class of weight functions
		, $\omega_\ell$ defined in \eqref{Om1-bis} being a particular instance of this class. }
		\textcolor{Dgreen}{The original proof of Theorem 2.7 in \cite{HZZZ22} 
			 relies on 
			 establishing, whenever $s>0$, that 
		 \begin{equation}\label{IntermediateWeightedNorm}
		 	\|f\|_{\bB^{s,q}_{\bbp;\bba}(\omega_\ell)} \asymp \|f\|_{\widetilde\bB^{s,q}_{\bbp;\bba}(\omega_\ell)}\asymp \|\omega_\ell f\|_{\bB^{s,q}_{\bbp;\bba}},
		 \end{equation}
		 where 
		 $$
		 \|f\|_{\widetilde\bB^{s,q}_{\bbp;\bba}(\omega_\ell)}:=\|\omega_\ell f\|_{\bbp}+\left(\int_{|h|_\bba\le 1}\Bigg(\frac{\|\omega_\ell \delta^{(s] +1)}_h f\|_{\bbp}}{|h|_\bba^s}\Bigg)^q\frac{ d  h}{|h|^{(2+\alpha)d}} \right)^{1/q},
		 $$
		 for $\delta^{(M)}_h$  the $M^{\text{\rm th}}$ difference operator defined in Section \ref{SEC_BESOV_BIS}. Essentially, the equivalence \eqref{IntermediateWeightedNorm} is obtained from the composition properties of $\delta^{(M)}_h$, suitable commutations 
		 with the block operators $\cR^\bba_j$ (see \cite{HZZZ22}, pp. 647), the properties \eqref{ClassWeight} and \eqref{WeightGrowth}, 
		 and finally the weighted Young inequality:  
		 \begin{equation}\label{WeightedYoung}
		 	\|(g*h)\omega_\ell\|_{\bbp}\lesssim \|g\omega_\ell\|_{\bbp_1}\|h\omega_\ell\|_{\bbp_2}, \ \ \frac 1{\bbp'}=\frac 1{\bbp_1}+\frac 1{\bbp_2}, \bbp,\bbp_1,\bbp_2\in[1,\infty]^2
		 \end{equation}
		 (for $\bbp'$ the conjugate \eqref{KineticConjugate} of $\bbp$).
		 The latter can be simply derived from 
		 Young's inequality, noticing that 
		 $$
		 g*h(z)\omega_\ell(z)\\
		 =
		 \int\frac{\omega_\ell(z)}{\omega_\ell(z-\overline{z})\omega_\ell(\overline{z})} \big(g\omega_\ell\big)(z-\overline{z})(h\omega_\ell)(\overline{z})\, d  \overline{z},
		 $$
		 and \textcolor{black}{observing} that, by \eqref{WeightGrowth}, $\frac{\omega_\ell(z)}{\omega_\ell(z-\overline{z})\omega_\ell(\overline{z})}\lesssim 1$, for all $z,\overline{z}$.
		}
		
		\textcolor{Dgreen}{
		The restriction $\bbp=(p,p)$ in \cite{HZZZ22} 
		was only made for simplicity and replicating the initial proof steps in \cite{HZZZ22}, one may extend \eqref{IntermediateWeightedNorm} from a common integrability index to a mixed one. For the 
		present paper, we shall restrict this extension on establishing the following estimate.
		\bl\label{lem:WeightedNorm} For any $f\in\cS'(\mR^{2d})$ such that $\omega_\ell f\in \bB^{\beta}_{\bbp;\bba}$ with $\beta\ge 0$, $\bbp\in[1,\infty]^2$, we have: for any $j\ge 0$,
		\begin{equation}\label{WeightedBlock}
			\|\big(\cR^\bba_jf\big)\omega_\ell\|_{\bbp}\lesssim 2^{-\beta j}\Big(\|\omega_\ell f\|_{\bbp}+\|\omega_\ell f\|_{\bB^{\beta}_{\bbp;\bba}}\Big).
		\end{equation}
		In particular, for $\beta=0$, $\|\big(\cR^\bba_jf\big)\omega_\ell\|_{\bbp}\lesssim \|\omega_\ell f\|_{\bbp}$.
		\el}
     \begin{proof}\textcolor{Dgreen}{As in \cite{HZZZ22}, the proof essentially consists in establishing that
     		\begin{equation}\label{WeightedBlock-Intermediate}
     			\|(\cR^\bba_jf)\omega_\ell\|_{\bbp}\lesssim 2^{-\beta j}\Bigg(\|\omega_\ell f\|_{\bbp}+\sup_{|h|_\bba\le 1}\frac{\|(\delta^{([ \beta]+1)}_hf)\omega_\ell\|_{\bbp}}{|h|^\beta_\bba}\Bigg).
     		\end{equation}     		
     		Indeed, this estimate gives naturally the claim when $\beta=0$. For $\beta>0$, recalling the Leibniz rule: for any integer $m\ge 1$,
     			$$
     			\delta^{(m)}_h(f\omega_\ell)=\sum_{n=0}^m\frac{m!}{n!(m-n)!}\big(\delta^{(n)}_hf\big)\big(\delta^{(m-n)}_h\omega_\ell(\cdot+nh)\big) 
     		$$
     		the quantity $\|(\delta^{[ \beta] +1}_hf)\omega_\ell\|_{\bbp}$ is readily  bounded by $\|\delta^{([ \beta] +1)}_h(f\omega_\ell)\|_{\bbp}$ and 
     		$\|(\delta^{(k)}_hf)(\delta^{(n)}_h\omega_\ell(\cdot+nh))\|_{\bbp}$ for $0\le k<m$ and $0\le n\le m$ {\black with $m=[\beta]+1$}. Further, 
     		due to the polynomial form \eqref{Om1-bis}, by Taylor expansions, one can check that:  for all $h\in\mathbb R^d$ such that $|h|_\bba\le 1$,
     		$$
     		|\delta^{(n)}_h\omega_\ell(z+nh)|\lesssim |h|^n_\bba\omega_\ell(z)\omega_\ell(nh),
     		$$
     		and further
     		$$ \|(\delta^{(k)}_hf)(\delta^{(n)}_h\omega_\ell(\cdot+nh))\|_{\bbp}\lesssim |h|^n_\bba\|(\delta^{(k)}_hf)\omega_\ell\|_{\bbp}.
     		$$
     		By iteration, it thus follows that 
     		$$
     		\sup_{|h|_\bba\le 1}\frac{\|(\delta^{([ \beta]+1)}_hf)\omega_\ell\|_{\bbp}}{|h|^\beta_\bba}\lesssim \|\omega_\ell f\|_{\bbp}+\sup_{|h|_\bba\le 1}\frac{\|\delta^{([ \beta]+1)}_h(f\omega_\ell)\|_{\bbp}}{|h|^\beta_\bba}.
     		$$
     		The claim \eqref{WeightedBlock} is finally derived by the equivalent norm \eqref{CH1} with $q=\infty$ and $s=\beta>0$.\\
     		For the proof of \eqref{WeightedBlock-Intermediate}, recall that, for any integer $m\ge 1$ and $h\in\mathbb R^{2d}$, the $m^{\text{\rm th}}$ difference operator writes as 
    $$
    \delta^{(m)}_hf(z)=\sum_{n=0}^m(-1)^{m-n}C^n_mf(z+nh).
    $$
    For fixed $m\ge 1$, we shall so define the class $\{\phi^{m,\bba}_k\}_{k\ge 0}$ as
    $$
    \phi^{m,\bba}_k(\xi)=(-1)^{m}\sum_{n=1}^m(-1)^{m-n}C^n_m\phi^\bba_k (n\xi),\ \ \xi\in\mathbb R^{2d}.
    $$
    Given that, for any $n\in\mathbb N\setminus\{0\}$, $\{\phi^\bba_j(n\cdot)\}_{j\ge 0}$ is a partition of unity, $\{\phi^{m,\bba}_k\}_{k\ge 0}$ defines itself a partition of unity with
    $$
    \sum_{k\ge 0}\phi^{m,\bba}_k(\xi)=(-1)^{m}\sum_{n=1}^m(-1)^{m-n}C^n_m=1.
    $$
    Hence, for all $j\ge 0$, $\cR^\bba_jf=	\sum_{k\ge 0}\check\phi^\bba_j*\check\phi^{m,\bba}_k*f$. Next, since $(\phi^{\bba}_k(n\cdot))\check{\ }(z)=n^{-2d}\check\phi^{\bba}_k(n^{-1} z)$ and $n^{-2d}\check\phi^\bba_j*\check\phi^\bba_k(n^{-1}\cdot)=(\phi^\bba_j\phi^\bba_k(n\cdot))\check{\ }=0$ for all $|k-j|>n+2$, from the above, we get
    \begin{align*}
    	\cR^\bba_jf(z)=	\sum_{k\ge 0}\check\phi^\bba_j*\check\phi^{m,\bba}_k*f(z)=\sum_{k\ge 0, |k-j|\le m+2}\check\phi^\bba_j*\Big((-1)^{m}\sum_{n=1}^m(-1)^{m-n}C^n_m (\phi^\bba_k(n\cdot))\check{\ }*f\Big)(z).
    \end{align*}
    Moreover by change of variables and as $\int \check\phi^\bba_k(\overline{z}) d \overline{z}=0$, 
    \begin{align*}
    &(-1)^m\sum_{n=1}^m(-1)^{m-n}C^n_m n^{-2d}\check\phi^{\bba}_k(n^{-1} \cdot)*f(z)=(-1)^m\sum_{n=1}^m(-1)^{m-n}C^n_m\int \check\phi^{\bba}_k(-\overline{z})f(z+n\overline{z}) d  \overline{z}\\ &=(-1)^m\sum_{n=0}^m(-1)^{m-n}C^n_m\int \check\phi^{\bba}_k(-\overline{z})f(z+n\overline{z}) d  \overline{z}=(-1)^m\int \check\phi^\bba_k(-\overline{z})\delta^{(m)}_{\overline{z}}f(z)d\overline{z}.
    \end{align*}
    Combining all the above, 
    $$
    \cR^\bba_jf(z)
    =(-1)^m\sum_{k\ge 0, |k-j|\le m+2}\check\phi^\bba_j*\int \check\phi^\bba_k(-\overline{z})\delta^{(m)}_{\overline{z}}f(z)d\overline{z}=:\sum_{k\ge 0, |k-j|\le m+2}\check\phi^\bba_j*\tilde\cR^\bba_kf(z).
    $$
    Applying the weighted Young inequality \eqref{WeightedYoung}, it follows that
    \begin{align*}
    \|(\cR^\bba_jf)\omega_\ell\|_{\bbp}\lesssim \sum_{k\ge 0, |k-j|\le m+2}\|\omega_\ell\check\phi^\bba_j\|_{\bbb1}\|(\tilde\cR^\bba_kf)\omega_\ell\|_{\bbp}.
    \end{align*}
    As $\phi^\bba_j$ is a Schwartz function, $\check\phi^\bba_j$ is itself a Schwartz function, and according to \eqref{SX4},  we have $\sup_j\|\omega_\ell\check\phi^\bba_j\|_{\bbb1}<\infty$. Thus,
    \begin{align*}
    \|(\tilde\cR^\bba_kf)\omega_\ell\|_{\bbp}&=\|\int \check\phi^\bba_k(-\overline{z})\big(\omega_\ell\delta^{(m)}_{\overline{z}}f\big)(\cdot)\, d \overline{z}\|_{\bbp}\le \int |\check\phi^\bba_k(-\overline{z})|\|\omega_\ell\delta^{(m)}_{\overline{z}}f\|_{\bbp}\, d  \overline{z}\\
    &\le \int_{|\overline{z}|_\bba\le 1} |\check\phi^\bba_k(-\overline{z})|\|\omega_\ell\delta^{(m)}_{\overline{z}}f\|_{\bbp}\, d  \overline{z}+\int_{|\overline{z}|_\bba> 1} |\check\phi^\bba_k(-\overline{z})|\|\omega_\ell\delta^{(m)}_{\overline{z}}f\|_{\bbp}\, d  \overline{z}.
    \end{align*}
    Taking now $m=[ \beta]+1$, and as $\|\check\phi^\bba_k|\cdot|_\bba^\beta\|_{\bbb1}\lesssim 2^{-\beta j}$
    \begin{align*}
    \int_{|z|_\bba\le 1} |\check\phi^\bba_k(-\overline{z})|\|\omega_\ell\delta^{(m)}_{\overline{z}}f\|_{\bbp}\, d  \overline{z}\lesssim 2^{-\beta j}\sup_{|\overline{z}|_\bba\le 1}\frac{\|(\delta^{([ \beta]+1)}_{\overline{z}}f) \omega_\ell\|_{\bbp}}{|\overline{z}|_\bba^\beta}.
    \end{align*}
    Meanwhile, since, for all $z,h\in\mathbb R^{2d}$, \eqref{ClassWeight} and \eqref{WeightGrowth} ensure that $|\delta^{(1)}_{h}\omega(z)|\lesssim |h|_{\bba}\omega_\ell(h)\omega_\ell(z)$ for all $h\in\mathbb R^{2d}$, by change of variables,
    $$
    \|(\delta^{(1)}_{\overline{z}}f)\omega_\ell\|_{\bbp}=\|(\delta^{(1)}_{-\overline{z}}\omega_\ell)f\|_{\bbp}\lesssim|\overline{z}|_\bba\omega_\ell(\overline{z})\|\omega_\ell f\|_{\bbp}.
    $$
    Iterating this estimate, it follows that
    $$
    \int_{|z|_\bba> 1} |\check\phi^\bba_k(-\overline{z})|\|\omega_\ell\delta^{(m)}_{\overline{z}}f\|_{\bbp}\, d  \overline{z}\lesssim \Big(\int_{|z|_\bba> 1} |\check\phi^\bba_k(-\overline{z})||\overline{z}|^m_{\bba}\omega_\ell(\overline{z})^m\, d  \overline{z}\Big)\|\omega_\ell f\|_{\bbp}\lesssim 2^{-j\beta }\|\omega_\ell f\|_{\bbp}.
    $$    
    Hence, we can conclude that 
    $$
    \|(\cR^\bba_jf)\omega_\ell\|_{\bbp}\lesssim 2^{-\beta j}\Big(\sup_{|\overline{z}|_\bba\le 1}\frac{\|(\delta^{([ \beta]+1)}_{\overline{z}}f) \omega_\ell\|_{\bbp}}{|\overline{z}|_{\bba}}+\|\omega_\ell f\|_{\bbp}\Big),
    $$
    and so conclude \eqref{WeightedBlock-Intermediate}.
    }
    \end{proof}
    {We next establish a weighted version of the estimate \eqref{AD0306} in Lemma \ref{SemigroupEstimate} for the kinetic semi-group $P_t$ in the case $\alpha=2$.\comm{JF 05/04: Considering only the case $\alpha=2$ makes naturally sense due to the weight.}
    To this aim, let us first establish a weighted version of the Bernstein inequality in Lemma \ref{BI00} (we refer to \cite[Lemma 2.4]{HZZZ22} for the special case $\bbp=(p,p)$).
    \bl
     For any positive integers $\bbk=(k_1,k_2)$, $\bbp,\bbp_1\in[1,\infty]$ with $\bbp_1\le \bbp$, and for $\cA_{\bbp_1,\bbp}$ defined as in \eqref{KJ1}, there is a constant $C=C(\bbk,\bbp,\bbp_1,d,\ell)>0$ such that for all $j\ge 0$, $f\in\sS'$,
     \begin{equation}\label{WeightedBernstein}
     	  	\|\big(\nabla^{k_1}_x\nabla^{k_2}_v\cR^\bba_jf\big)\omega_\ell\|_{\bbp}\lesssim 	2^{j\big((1+\alpha)k_1+k_2+\cA_{\bbp_1,\bbp}\big)}\|\big(\cR^\bba_jf\big)\omega_\ell\|_{\bbp_1}.
     \end{equation}
    \el
    \begin{proof}
    Using (again) the partition of unity defined by $\{\phi^\bba_k\}_{k\ge 0}$ and since, according to \eqref{Cx8}, $\text{supp}(\phi^\bba_j\phi^\bba_k)=0$ whenever $|k-j|>2$, we have
    $$
    \nabla^{k_1}_x\nabla^{k_2}_v\cR^\bba_jf=\sum_{k\ge 0,|k-j|\le 2}\check\phi^\bba_k*\nabla^{k_1}_x\nabla^{k_2}_v\cR^\bba_jf=\sum_{k\ge 0,|k-j|\le 2}\nabla^{k_1}_x\nabla^{k_2}_v\check\phi^\bba_k*\cR^\bba_jf.
    $$
    Applying the weighted Young inequality \eqref{WeightedYoung}, it follows that 
    \begin{align*}
    \|(\nabla^{k_1}_x\nabla^{k_2}_v\cR^\bba_jf)\omega_\ell\|_{\bbp}&\le \sum_{k\ge 0,|k-j|\le 2}\|(\nabla^{k_1}_x\nabla^{k_2}_v\check\phi^\bba_k*\cR^\bba_jf)\omega_\ell\|_{\bbp}\\
    &\lesssim\sum_{k\ge 0,|k-j|\le 2}\|(\cR^\bba_jf)\omega_\ell\|_{\bbp_1}\|\big(\nabla^{k_1}_x\nabla^{k_2}_v\check\phi^\bba_k\big)\omega_\ell\|_{\bbp_2},
    \end{align*}
    where $\frac 1{\bbp'}=\frac 1{\bbp_1}+\frac 1{\bbp_2}$
     Since each $\check\phi^\bba_k$ is a Schwartz function, 
      $\|\big(\nabla^{k_1}_x\nabla^{k_2}_v\check\phi^\bba_0)\omega_\ell\|_{\bbp_2}
    <\infty$. For $k\ge 1$, owing to the scaling property \eqref{SX4} and by change of variables, we get
    \begin{align*}
    \|\check\phi^\bba_k\omega_\ell\|_{\bbp_2}&=2^{(k-1)\big((1+\alpha)k_1+k_2+(2+\alpha)d\big)} \|\big(\nabla^{k_1}_x\nabla^{k_2}_v\check\phi^\bba_1\big)(2^{(k-1)\bba}\cdot)\omega_\ell\|_{\bbp_2}\\
    &=2^{(k-1)\big((1+\alpha)k_1+k_2+\cA_{\bbb1,\bbp_2}\big)}\|(\nabla^{k_1}_x\nabla^{k_2}_v\check\phi^\bba_1)\omega_\ell(2^{-(k-1)\bba}\cdot)\|_{\bbp_2}.
    \end{align*}
    Since $\omega_\ell(2^{-(k-1)\bba}\cdot)\lesssim_{C(\ell)}\omega_\ell$ and  $\cA_{\bbb1,\bbp_2}=\cA_{\bbp_1,\bbp}$, this gives the claim.
    \end{proof}
    }
	\textcolor{Dgreen}{Owing to this preliminary, we have the following  heat kernel estimate:\comm{JF 05/04: Beside the restriction $\beta>0$ made for natural simplicity (and since we never need $\beta<0$), compared to \eqref{AD0306}, due to the weight function, we need an additional dependency on the time horizon.}
		\bl\label{WeightedHeatEstimate} Let $P_t$ be defined as in \eqref{SCL11} for $\alpha=2$.
		For any $\bbp,\bbp_1\in[1,\infty]^2$ with $\bbp_1\le \bbp$, $\beta>0,\ \textcolor{black}{\gamma>0}$ and $0\le T<\infty$, there is a constant $C=C( d,\beta,\gamma,\bbp,\bbp_1,\ell,T)>0$ 
		such that for any $f\in \bB^{\beta}_{\bbp_1;\bba}(\omega_\ell)$, $0<t\le T$, and $j\ge 0$,
		\begin{align}
			\|\big(\cR^\bba_jP_t \nabla_vf\big)\omega_\ell\|_{\bbp}
			\le C  2^{j(\cA_{\bbp_1,\bbp}+1-\beta)}
			\big((2^{2j}t)^{-\gamma}\wedge 1\big)\|\omega_\ell f\|_{\bB^{\beta}_{\bbp_1;\bba}},\label{WeightedAD0306}
		\end{align}
		where $\cA_{\bbp_1,\bbp}$ is defined as in \eqref{KJ1}, with $\alpha=2$ and $\bba=(3,1)$.
		\el
}
		\begin{proof} \textcolor{Dgreen}{The proof essentially consists in replicating the original arguments of the items $(i)$ and $(ii)$ in \cite[Lemma 2.12]{HRZ23}, the auxiliary Lemma 2.11 therein and Lemma 3.1 in \cite{ZZ21}.
			}
		
		\textcolor{Dgreen}{
			 For the case $j=0$, starting from \eqref{SCL11} - with $p_t$ the density function as in \eqref{SCL1} -, by direct calculations, applying twice the weighted Young inequality \eqref{WeightedYoung} and using the invariance of $\mL^\bbp$ by $\transOp_t$, it follows that\comm{JF 07/04: Details of the second equality: By direct calculations, $\big(\check\phi^\bba_0*\transOp_t(p_t*f)\big)\omega_\ell=\transOp_{t}\big(\transOp_{-t}\check\phi^\bba_0*(p_t*f)\big)\omega_\ell=\transOp_{t}\Big(\big(\transOp_{-t}\check\phi^\bba_0*(p_t*f)\big)\transOp_{-t}\omega_\ell\Big)$ and invariance by translation of $\mL^\bbp$.}
			 \begin{align*}
			 	\|(\cR^\bba_0 P_t\nabla_vf)\omega_\ell\|_{\bbp}&=\|\big(\check\phi^\bba_0*\transOp_t(p_t*\nabla_vf)\big)\omega_\ell\|_{\bbp}=\|\big(\transOp_{-t}\check\phi^\bba_0*(p_t*\nabla_vf)\big)\transOp_{-t}\omega_\ell\|_{\bbp_1}\\
			 	&\lesssim \|\big(\transOp_{-t}\check\phi^\bba_0*(p_t*\nabla_vf)\big)\omega_\ell\|_{\bbp}=\|\big(\nabla_v\transOp_{-t}\check\phi^\bba_0*(p_t* f)\big)\omega_\ell\|_{\bbp}\\
			 	&\lesssim \|(\nabla_v\transOp_{-t}\check\phi^\bba_0) \omega_\ell\|_{\bbp_2}\|(p_t*f)\omega_\ell\|_{\bbp_1}\\
			 	&\lesssim \|(\nabla_v\transOp_{-t}\check\phi^\bba_0) \omega_\ell\|_{\bbp_2}\|p_t\omega_\ell\|_{\bbb1} \|f\omega_\ell\|_{\bbp_1},
			 \end{align*}
			 for $\bbp_2\in[1,\infty]^2$ such that $1/\bbp_2=1/\bbp_1+1/\bbp'$. Owing to the scaling properties of $p_t$ (with $\alpha=2$) and the polynomial form of $\omega_\ell$, $\sup_{t>0}\|p_t\omega_\ell\|_{\bbb1}<\infty$. Since 
			 $\nabla_v\transOp_{-t}\check\phi^\bba_j=\transOp_{-t}\nabla_v\check\phi^\bba_j+t\transOp_{-t}\nabla_x\check\phi^\bba_j$,  $|\transOp_{t}\nabla_v\omega_\ell|\lesssim_{C(T)}\omega_\ell$, and since $\check{\phi}^\bba_0$ is a Schwartz function, we have
			 \begin{align*}
			 \|\nabla_v\transOp_{-t}\check\phi^\bba_0 \omega_\ell\|_{\bbp_2}\lesssim_{C(T)} \|\nabla_v\check\phi^\bba_0 \omega_\ell\|_{\bbp_2}+\|\nabla_x\check\phi^\bba_0 \omega_\ell\|_{\bbp_2}
			 <\infty.
			 \end{align*}
			 Therefore, as $\beta>0$, the embedding \eqref{AB2} yields $\bB^\beta_{\bbp_1;\bba}\hookrightarrow \mL^{\bbp_1}$ and, for all $\gamma> 0$,
			 $$
			 \|\big(\cR^\bba_0P_t f\big)\omega_\ell\|_{\bbp}
			 \le C  
			 \|\omega_\ell f\|_{\bbp_1}\lesssim_{C(T)} \big(t^{-\gamma}\wedge 1\big)\|\omega_\ell f\|_{\bB^\beta_{\bbp_1;\bba}}.
			 $$}
		\textcolor{Dgreen}{
			For the general case $j\ge 1$,\comm{JF 05/04: what follows is the original argument from Lemma 2.4-$(i)$ \cite{HRZ23} slightly rewritten to be self-contained without invoking other side results.} as the $\phi^\bba_j$'s define a partition of unity, $\sum_{k\ge 0}\cR^\bba_kf=f$ and thus 
			\begin{align}\label{Partition}
			\cR^\bba_jP_t \nabla_vf&=\sum_{k\ge 0}\cR^\bba_j\transOp_{t} p_t*
			\transOp_{t}\cR^\bba_k\nabla_vf=\sum_{k\ge 0}\check\phi^\bba_j *\transOp_{t}p_t*\transOp_{t}\check\phi^\bba_k*\transOp_{t} \nabla_vf\\
			&=\sum_{k\ge 0}\check\phi^\bba_j*\transOp_{t}\check\phi^\bba_k *\transOp_{t}p_t*\transOp_{t} \nabla_vf.\nonumber
			\end{align}
			Since\comm{JF 17/04: From this line up to "Coming back" in the middle of the page, we could potentially refer to Lemma 2-11 \cite{HRZ23} - the proof slightly herein differs from the proof therein, but, up to slight troubles of not-direct and forgotten "copy-paste" in the paper \cite{HRZ23}, we could potentially save a few lines.} $\transOp_{t}\check\phi^\bba_k=(\phi^\bba(\xi_1,\xi_2-t\xi_1))\check{\ }=(\tilde\transOp_t\phi^\bba_k)\check{\ }$, we can write $\check\phi^\bba_j*\transOp_{t}\check\phi^\bba_k=:(\phi^\bba_j\tilde\transOp_t\phi^\bba_k)\check{\ }$. Observe next that (with the convention $2^{k-1}=0$ if $k=0$) the support of $\phi^\bba_j\tilde\transOp_t\phi^\bba_k$ is given by
			\begin{align*}
			\text{supp}\big(\phi^\bba_j\tilde\transOp_t\phi^\bba_k\big)&=\Big\{\xi=(\xi_1,\xi_2)\,:\,2^{j-1}\le |\xi_1|^{1/3}+|\xi_2|\le 2^{j+1}, \,2^{k-1}\le |\xi_1|^{1/3}+|\xi_2-t\xi_1|\le 2^{k+1} \Big\}.
			\end{align*}
			By triangular inequality, each element of this set has to satisfy: $|\xi_1|^{1/3}\le 2^{j+1}\wedge 2^{k+1}=2^{j\wedge k+1}$,
			\begin{align*}
				&(2^{k-1}-t|\xi_1|)\vee 2^{j-1} \le |\xi_1|^{1/3}+|\xi_2|\le 2^{j+1}\wedge (2^{k+1}+t|\xi_1|)\le 2^{j+1}\wedge (2^{k+1}+t2^{3(k+1)}),\\
				&(2^{j-1}-t|\xi_1|)\vee 2^{k-1}\le|\xi_1|^{1/3}+|\xi_2-t\xi_1|\le 2^{k+1}\wedge (2^{j+1}+t|\xi_1|)\le 2^{k+1}\wedge (2^{j+1}+t2^{3(j+1)}).
			\end{align*}
		    In view of these delimiters, given $j\ge 0$ and $t\in[0,T]$, for all $k$ outside the family of indices 
			$$
			\Theta^t_j:=\{ k\in\mathbb N\,:\,2^{k-1}\le 2^{j+1}+t2^{3(j+1)}\,\text{and}\,2^{j-1}\le 2^{k+1}+t2^{3(k+1) }\},
			$$
			the support set	$\text{supp}\big(\phi^\bba_j\tilde\transOp_t\phi^\bba_k\big)$ is empty and $\phi^\bba_j\tilde\transOp_t\phi^\bba_k=0$. 
			Additionally, for all $k\in \Theta^t_j$, 
			$$
			2^{k-1}\le \big(2^{j+1}+t2^{3(j+1)}\big)\le2^{j+1}\big(1+t2^{2(j+1)}\big), $$
			and so
			$$
			k\le j+2+(\ln(2))^{-1}\ln\big(1+t2^{2(j+1)}\big)=:j+2+\Theta_+. 
			$$
			From this upper-bound, we further deduce a lower-bound  on $\Theta^t_j$, observing that 
			$$
			2^{k+1}\ge 2^{j-1}\big(1+t2^{2(k+1)}\big)^{-1}\ge 2^{j-1}\big(1+t2^{2(j+3+\Theta_+)}\big)^{-1}
			$$
			and so
			$$
			k\ge \Big(j-2-(\ln(2))^{-1}\ln\big(1+t2^{2(j+2+\Theta_+)}\big)\Big)\vee 0=:\big(j-2-\Theta_-\big)\vee 0.
			$$
			Therefore, as $\beta>0$, $2^{\beta \Theta_-}=\big(1+t2^{j+1+\Theta_+}\big)^{\beta}$ and $2^{\Theta_+}=(1+t2^{2(j+1)})^2$,
			\begin{equation}\label{ClosedSummation}
				\sum_{k\in\Theta^t_j}2^{-k\beta}=\frac{2^{-\beta(j-2-\Theta_-)\vee 0}-2^{-\beta(j+2+\Theta_+)}}{1-2^{-\beta}}\lesssim_{C(\beta)}2^{-\beta j}\big(1+t2^{3(j+1)}\big)^\beta.
			\end{equation}
		    Coming back to \eqref{Partition}, 
		    it follows that
			$$
			\cR^\bba_jP_t f= \sum_{k\in\Theta^t_j }\check\phi^\bba_j*\transOp_{t}\check\phi^\bba_k *\transOp_{t}p_t*\transOp_{t}\nabla_v f
			=\sum_{k\in \Theta^t_j}\cR^\bba_j\transOp_{t} p_t*
			\transOp_{t}\cR^\bba_k\nabla_vf.
			$$
			Now, using again the weighted Young inequality \eqref{WeightedYoung}, we get 
			\begin{align*}
			\|\big(\cR^\bba_jP_t \nabla_vf\big)\omega_\ell\|_{\bbp_1}&\lesssim \sum_{ k\in\Theta^t_j}\|\big((\cR^\bba_j\transOp_{t}p_t)*(\transOp_{t}\cR^\bba_kf) \big)\omega_\ell\|_{\bbp_1}\lesssim\sum_{k\in\Theta^t_j} \|(\cR^\bba_j\transOp_{t}p_t)\omega_\ell\|_{\bbb1}\|(\transOp_{t}\cR^\bba_k\nabla_vf) \omega_\ell\|_{\bbp_1}.
			\end{align*}
			 Since, for any $z\in\mathbb R^{2d}$, $|\transOp_{-t}\omega_\ell(z)|\lesssim_{C(T,\ell)}\omega_\ell(z)$,
		applying successively the weighted Bernstein inequality \eqref{WeightedBernstein} (with $\bbk=(0,1)$, $\bbp=\bbp_1$) and the estimate \eqref{WeightedBlock} in Lemma \ref{lem:WeightedNorm} yields
			$$
			\|(\transOp_{t}\cR^\bba_k\nabla_vf) \omega_\ell\|_{\bbp_1}\lesssim 2^k\|(\cR^\bba_kf) \omega_\ell\|_{\bbp_1}\lesssim 2^{k(1-\beta)}\|f \omega_\ell\|_{\bB^{\beta}_{\bbp_1;\bba}}.
			$$
			Finally, extending the proof arguments of Lemma 3.1 in \cite{ZZ21} to a weighted setting, we may claim that \comm{JF 05/04: The estimate in \cite{ZZ21} is in fact the key estimate for getting \eqref{S1:00}. For our present paper, the case $\omega_\ell=1$ naturally drive the most important part in the estimate, and we could infer the result without demonstration. Nevertheless, for the sake of completeness and given that I could not find a single reference for the present estimate - \cite{HWZ20} gives something close but not exactly what we need.}  
			\begin{equation}\label{WeightedHeatKernel}
			\|(\cR^\bba_j\transOp_{t}p_t)\omega_\ell\|_{\bbb1}\lesssim  (2^{2j}t)^{-\gamma}\wedge 1.
			\end{equation}
			From this estimate, and using \eqref{ClosedSummation} with $\gamma+3\beta$ in place of $\gamma$, we obtain that
			\begin{align*}
				\sum_{k\in\Theta^t_j} \|(\cR^\bba_j\transOp_{t}p_t)\omega_\ell\|_{\bbb1}\|(\transOp_{t}\cR^\bba_kf) \omega_\ell\|_{\bbp_1}\lesssim \Big((2^{2j}t)^{-\gamma-\beta}\wedge 1\Big) \|f \omega_\ell\|_{\bB^{\beta}_{\bbp_1;\bba}}\sum_{k\in\Theta^t_j}2^{k(1-\beta)},
				\end{align*}
				and so
				\begin{align*}
				\sum_{k\in\Theta^t_j} \|(\cR^\bba_jP_t\nabla_vf)\omega_\ell\|_{\bbb1}&\lesssim 2^{j(1-\beta)}\Big((2^{2j}t)^{-\gamma-3\beta}\wedge 1\Big)(1+t2^{3j})^{\beta}  \|f \omega_\ell\|_{\bB^{\beta}_{\bbp_1;\bba}}.
			\end{align*}
		This gives \eqref{WeightedAD0306}.}
			
			\textcolor{Dgreen}{
			The proof of the estimate \eqref{WeightedHeatKernel} proceeds as follows: recalling \eqref{SX4} with $\alpha=2$,  $\phi^\bba_j(\xi)=2^{4(j-1)d}\phi^\bba_1(2^{(j-1)\bba}\xi)$ (for $2^{s\bba}\xi=(2^3\xi_1,2\xi_2)$). Using (again) the scaling properties of $p_t$, by change of variables, we have, for $\hbar:=(2^{j}\sqrt{t})^{-1}$, and $2^{-\bba}\overline{z}=(2^{-3}\overline{z}_1,2^{-1}\overline{z_2})$, 
			\begin{align*}
			\|(\cR^\bba_j\transOp_{t}p_t)\omega_\ell\|_{\bbb1}&=2^{-4d}\int \omega_\ell(z)\Big|\int\check\phi^\bba_1(2^{-\bba}\overline{z})p_1(x-\hbar^3\overline{x}+v-\hbar \overline{v},v-\hbar\overline{v}) d  \overline{z}\Big| d  z\\
			&=:\int \omega_\ell(z)\Big|\int\check\phi^\bba_1(2^{-\bba}\overline{z})H(z-\transOp_{\hbar}\hbar^{\bba}\overline{z}) d  \overline{z}\Big| d  z.
			\end{align*}
		    For $m\in\mathbb N$, defining next the operator $\triangle^m_z$ and its inverse $\triangle^{-m}_{\xi}$, as
			$$
			\triangle^m_z=\big(\triangle_x+\triangle_v\big)^m, \ \ (\triangle^{-m}_{\xi}f)\hat{\ }(\xi_1,\xi_2)=\big(|\xi_1|^2+|\xi_2|^2\big)^{-m}\hat{f}(\xi_1,\xi_2),
			$$ 
			we get
			\begin{align*}
			\int\check\phi^\bba_1(2^{-\bba}\overline{z})H(z-\transOp_{\hbar}\hbar^{\bba}) d  \overline{z}=\int\check(\triangle^{-m}_{\overline{z}}\phi^\bba_1)(2^{\bba}\overline{z})\triangle^m_{\overline{z}} H(z-\transOp_{\hbar}\hbar^{\bba}) d  \overline{z}.
			\end{align*}
			Using the growth property \eqref{WeightGrowth} of $\omega_\ell$, it follows that
			\begin{align*}
			\|(\cR^\bba_j\transOp_{t}p_t)\omega_\ell(z)\|_{\bbb1}
			\lesssim\|\big(\triangle^{-m}_{\overline{z}}\check\phi^\bba_1(2^{\bba}\overline{z})\big)\omega_\ell(\overline{z})\|_{\bbb1}\sup_{\overline{z}}\|\big(\triangle^m_{\overline{z}}H(\cdot-\hbar\transOp_{h}\overline{z})\big)\omega_\ell(\cdot-\hbar\transOp_{h}\overline{z})\|_{\bbb1}.
			\end{align*}
			Owing to the regularity of $p_1$, 
			$$
			\|\triangle^m_{\overline{z}}H(\cdot-\hbar\transOp_{h}\overline{z})\omega_\ell(\cdot-\hbar\transOp_{h}\overline{z})\|_{\bbb1}\lesssim \hbar^{2m},
			$$
			and since $\check\phi^\bba_1$ is a Schwartz function, $\|\triangle^{-m}_{\overline{z}}\check\phi^\bba_1(2^{-\bba}\overline{z})\omega_\ell(\overline{z})\|_{\bbb1}$ is finite. Hence
			$$
			\|\cR^{\bba}_j\transOp_{t}p_t\omega_\ell\|\lesssim \hbar^{2m}=(2^{2j}t)^{-m}.
			$$
		 As $m$ is arbitrary, \eqref{WeightedHeatKernel} follows.}
		\end{proof}
	
\section{Besov norm of scale functions}\label{APP_BESOV}
For $\phi\in\cS(\mR^{2d})$,
we define a dilation operator \textcolor{Dgreen}{as}: for $\lambda>0$,
$$
\phi_\lambda(z):=\cD_\lambda\phi (x,v):=\lambda^{(2+\alpha)d}\phi(\lambda^{1+\alpha} x, \lambda  v).
$$
Let $\bbp,\bbp'\in[1,\infty]^2$ with $\frac1{\bbp'}+\frac1{\bbp}=\bbb1$. For any $\beta\ge0$, by the scaling properties of the mixed norm $\Vert\,\cdot\Vert_{\bbp}$ \textcolor{Dgreen}{defined in \eqref{LP1} }and of the anisotropic distance \eqref{AnisoDist},   
 it is easy to see that\textcolor{Dgreen}{, for all $\lambda\geq 0$,}
\begin{align}\label{Es1}
\||\cdot|_\bba^\beta\phi_\lambda(\cdot)\|_{\bbp}= \lambda^{\bba\cdot d/\bbp'-\beta}\||\cdot|_\bba^\beta\phi(\cdot)\|_{\bbp}
= \lambda^{\cA_{\bbb1,\bbp}-\beta}\||\cdot|_\bba^\beta\phi(\cdot)\|_{\bbp}.
\end{align}
We first \textcolor{Dgreen}{establish}  the following crucial lemma for later use. 
\bl
Let $\psi$ be a smooth function so that $\hat\psi\in C^\infty_c(\mR^{2d}\setminus\{0\})$ and set $\psi_\ell=\mathcal D_\ell\psi$. 
For any $\bbp\in[1,\infty]^2$ and $\beta\geq 0$\comm{{\color{MColor}Better use the symbol $m$ than $\beta$ to avoid misleading/contradictory notation - this would be consistent given the proof arguments.} \textcolor{Dgreen}{JF 07/04: Although I still agree on the comment, the notation $\beta$ finds somehow its justification in the last lemma of this section ...}}, there is a constant $C=C(\beta,d,\bbp,\psi,\phi\textcolor{Dgreen}{)}>0$ 
such that for all $\lambda,\ell\geq 0$ and $t\in\mR$\comm{JF 07/04: Are we really looking at negative time somewhere ?},
\begin{align}\label{DD4}
\|\psi_\ell*\transOp_t\phi_\lambda\|_\bbp{\color{black}\le C}
 \lambda^{\cA_{\bbb1,\bbp}} (1+|t|\lambda^\alpha)^\beta
(\lambda/\ell)^{\beta}.
\end{align}
\el
\begin{proof}
\textcolor{Dgreen}{Set} $\hbar:=\lambda/\ell$. First of all, by the \textcolor{Dgreen}{very} definition \textcolor{Dgreen}{of $\mathcal D_\lambda$ }and change\textcolor{Dgreen}{s} of variable\textcolor{Dgreen}{s}, it is easy to see that\comm{JF 22/03: The previous estimate was $\psi_\ell*(\omega\transOp_t\phi_\lambda)
	=\cD_{\ell}(\psi*\cD_{\ell^{-1}}(\omega\transOp_t\cD_\lambda\phi))=\cD_{\ell}(\psi*\cD_{\hbar}(\cD_{\lambda^{-1}}\omega\cdot\transOp_{t\lambda^\alpha}\phi))$ introducing a weight $\omega$ which disappeared just after.}
\begin{align*}
\psi_\ell*(\textcolor{Dgreen}{\transOp_t}\phi_\lambda)\textcolor{Dgreen}{=\cD_{\ell}\big(\cD_{\ell^{-1}}(\psi_\ell* (\transOp_t\cD_\lambda\phi))\big)}
=\cD_{\ell}(\psi*\cD_{\ell^{-1}}(\textcolor{Dgreen}{\transOp_t}\cD_\lambda\phi))=\cD_{\ell}(\psi*\cD_{\hbar}(\cD_{\lambda^{-1}}\textcolor{Dgreen}{\transOp_{t\lambda^{\alpha}}}\phi)),
\end{align*}
\textcolor{Dgreen}{the last equality following from the property $  \transOp_{t}\mathcal D_{\lambda}\phi(x,v)
=\mathcal D_{\lambda}\transOp_{\lambda^\alpha t}\phi(x,v)$.}
By the \textcolor{Dgreen}{embedding \eqref{AB2}, the} Young inequality \textcolor{Dgreen}{\eqref{Con}}, \eqref{Es1} and \textcolor{Dgreen}{since}  $\|\transOp_tf\|_\bbp=\|f\|_\bbp$, we have
\begin{align}\label{AA2}
\|\psi_\ell*\transOp_t\phi_\lambda\|_\bbp
\textcolor{Dgreen}{=\ell^{\cA_{\bbb1,\bbp}}\|\psi*\cD_{\hbar}(\cD_{\lambda^{-1}}\transOp_{t\lambda^{\alpha}}\phi)\|_{\bbp}\lesssim} \ell^{\cA_{\bbb1,\bbp}}\|\psi\|_{\textcolor{Dgreen}{\bB^{0,1}_{\bbb1;\bba}}}\|\cD_{\hbar}\transOp_{t\lambda^\alpha}
\phi\|_\bbp
=\lambda^{\cA_{\bbb1,\bbp}}\|\psi\|_1\|\phi\|_\bbp,
\end{align}
which gives \eqref{DD4} for $\beta=0$. Next we consider the case $\beta>0$.
\textcolor{Dgreen}{Introducing the Laplacian operator, $\triangle:=\triangle_x+\triangle_v$, f}or any $m\in\mN$, since supp$(\hat\psi)\subset\mR^{2d}\setminus\{0\}$, $\Delta^{-m}\psi$ is a Schwartz function, we further have
\begin{align}\label{DD1}
\|\psi_\ell*\transOp_t\phi_\lambda\|_\bbp&=\ell^{\cA_{\bbb1,\bbp}}\|\Delta^{-m}\psi*\Delta^{m}\cD_{\hbar}\transOp_{t\lambda^\alpha}
\phi\|_\bbp
\leq\ell^{\cA_{\bbb1,\bbp}}\|\Delta^{-m}\psi\|_{\textcolor{Dgreen}{\bB^{0,1}_{\bbb1;\bba}}}\|\nabla^{2m}\cD_{\hbar}\transOp_{t\lambda^\alpha}
\phi\|_\bbp,
\end{align}
where $\nabla=(\nabla_x,\nabla_v)$.
Noting that by the chain rule,
\begin{align*}
|\nabla^{2m}\cD_\hbar \transOp_{t\lambda^\alpha}
\phi|\leq\sum_{k=0}^{2m}|\nabla_x^{2m-k}\nabla_v^{k}\cD_\hbar \transOp_{t\lambda^\alpha}
\phi|
=\sum_{k=0}^{2m}\hbar^{{(2m-k)(1+\alpha)+k}} |\cD_\hbar\nabla_x^{\textcolor{Dgreen}{2}m-k}\nabla^{k}_v \transOp_{t\lambda^{\alpha}}
\phi|,
\end{align*}
by \eqref{Es1}, we have
\begin{align}\label{DD2}
\|\nabla^{2m}\cD_\hbar \transOp_{t\lambda^\alpha}
\phi\|_\bbp
\leq\sum_{k=0}^{2m}\hbar^{2(1+\alpha)m-\alpha k} \hbar^{\cA_{\bbb1,\bbp}}\|\nabla_x^{2m-k}\nabla^{k}_v \transOp_{t\lambda^{\alpha}}
\phi\|_\bbp.
\end{align}
On the other hand, noting that \textcolor{Dgreen}{since}  $\nabla_x\transOp_t=\transOp_t\nabla_x$ and $\nabla_v\transOp_t=\transOp_t\circ(-t\nabla_x+\nabla_v)$,
$$
\nabla_x^{2m-k}\nabla^{k}_v\transOp_{t\lambda^{\alpha}}
\phi=\transOp_{t\lambda^{\alpha}}
\nabla_x^{2m-k}(-t\lambda^\alpha\nabla_x+\nabla_v)^k
\phi
=\transOp_{t\lambda^\alpha}
\sum_{i=0}^k\frac{k!}{i!(k-i)!}(-t\lambda^\alpha)^i
\nabla_x^{2m-k+i}\nabla^{k-i}_v\phi.
$$
\textcolor{Dgreen}{As} $\|\transOp_tf\|_\bbp=\|f\|_\bbp$, we have, for any $k=0,\cdots,2m$,
\begin{align}\label{DD3}
\|\nabla_x^{2m-k}\nabla^{k}_v\transOp_{t\lambda^\alpha}
\phi\|_\bbp
\leq\sum_{i=0}^k(|t|\lambda^\alpha)^i
 \|\nabla_x^{2m-k+i}\nabla^{k-i}_v\phi\|_\bbp\lesssim(1+|t|\lambda^\alpha)^{2m}.
\end{align}
Hence, by \eqref{DD1}-\eqref{DD3}, we have, for any $m\in\mN$,
\begin{align*}
\|\psi_\ell*\transOp_t\phi_\lambda\|_\bbp
&\lesssim\ell^{\cA_{\bbb1,\bbp}}\sum_{k=0}^{2m}\hbar^{2(1+\alpha)m-\alpha k} \hbar^{\cA_{\bbb1,\bbp}}(1+|t|\lambda^{\alpha})^{2m}
\\
&\lesssim\lambda^{\cA_{\bbb1,\bbp}} (1+|t|\lambda^\alpha)^{2m}
\sum_{k=0}^{2m}\hbar^{2(1+\alpha)m-\alpha k}.
\end{align*}
If $\hbar>1$, then
$$
\|\psi_\ell*\transOp_t\phi_\lambda\|_\bbp\lesssim\lambda^{\cA_{\bbb1,\bbp}}(1+|t|\lambda^\alpha)^{2m}
\hbar^{2(1+\alpha)m}\leq\lambda^{\cA_{\bbb1,\bbp}} (1+|t|\lambda^\alpha)^{2(1+\alpha)m}\hbar^{2(1+\alpha)m}.
$$
If $\hbar\leq1$, then
$$
\|\psi_\ell*\transOp_t\phi_\lambda\|_\bbp\lesssim\lambda^{\cA_{\bbb1,\bbp}} (1+|t|\lambda^\alpha)^{2m}
\hbar^{2m}.
$$
Thus by \eqref{AA2} and interpolation, we obtain \eqref{DD4}.
\end{proof}
\textcolor{Dgreen}{
Let us now recall the weight function previously defined in \eqref{Om1-bis}\comm{JF 04/04: Given the new (and prior) appendix section, all the properties of the weight function stated here, have been moved to this first appendix section.},
$$ \omega_\ell(z):=\big(1+(1+|x|^2)^{\frac1{1+\alpha}}+|v|^2\big)^{\ell/2},\ \ z=(x,v)\in\mR^{2d}.
$$
}
\textcolor{Dgreen}{Observing that
	\begin{equation}
	|\omega_\ell\phi_\lambda|\leq \cD_\lambda|\omega_\ell\phi|,
\end{equation}
	w}e have the following useful corollary.
\bl\label{08:18}
Let $\phi\in C^\infty_c(\mR^{2d})$ and $\phi_\lambda:=\cD_\lambda\phi$.
For any $T,\ell,\beta\geq 0$ and $\bbp\in[1,\infty]^2$, there is a constant $C=C\textcolor{Dgreen}{(T,\ell,d,\bbp,\beta,\phi)}>0$\comm{JF 07/04: Old : $C(T,\ell,\beta,\bbp,d,\phi)$} such that for all $t\in[0,T]$ and $\lambda\geq 1$,\comm{S. and JF 23/03: We have to emphasize the bound for the non-weighted case $\ell=0$.}
\begin{align}\label{Es3}
\|\omega_\ell\transOp_t\phi_\lambda\|_{\bB^\beta_{\bbp;\bba}}{\color{black}\le C}
\lambda^{(1+\alpha)\beta+\cA_{\bbb1,\bbp}}.
\end{align}
\textcolor{Dgreen}{In particular, for $\ell=0$ (namely $\omega_\ell=1$), we have
$$
\textcolor{Dgreen}{\|\transOp_t\phi_\lambda\|_{\bB^\beta_{\bbp;\bba}}}{\color{black}\le C}
\lambda^{(1+\alpha)\beta+\cA_{\bbb1,\bbp}}.
$$
}
\el
\begin{proof}
First of all we show \eqref{Es3} for $\ell=0$. Let $\beta\geq 0$ and $\bbp\in[1,\infty]^2$.
For $j\geq 1$, noting  that
$$
\cR_j^\bba\transOp_t\phi_\lambda=\check\phi^\bba_j*\transOp_t\phi_\lambda.
$$
\textcolor{Dgreen}{According to} \eqref{DD4} \textcolor{Dgreen}{- in the preceding lemma - applied to $\psi_\ell=\cD_\ell\psi=\check\phi^\bba_j$ with, in view of \eqref{SX4}, $\ell=2^{j}$  and $\psi(z)=2^{-1}\check\phi^\bba_2(2^{-\bba}z)$)\comm{JF 07/04: Not the best writing but I believe some additional details were needed here ...}}, there is a constant $C>0$ such that, for all $t\geq 0,\lambda>0$ and $j\geq 1$,
$$
\|\cR_j^\bba\transOp_t\phi_\lambda\|_{\bbp}\le C 
(1+t\lambda^\alpha)^{\beta}
\lambda^{\bba\cdot \frac{d}{\bbp'}}(\lambda/2^{j})^{\beta}.
$$
Hence, for any $t\geq 0$ and $\lambda>0$, \textcolor{Dgreen}{for the high frequency part of $2^{j\beta}\|\cR^\bba_j\transOp_{t}\phi_\lambda\|_{\bbp}$, $j\ge 1$\comm{JF 07/04: Additional mention related to the mention of the low frequency just after.}, we have }
$$
\sup_{j\geq 1}\Big(2^{j\beta}\|\cR_j^\bba\transOp_t\phi_\lambda\|_{\bbp}\Big)\le C 
(1+t\lambda^\alpha)^{\beta}
\lambda^{\bba\cdot \frac{d}{\bbp'}+\beta}.
$$
On the other hand, for the lower frequency part $j=0$,  by  \textcolor{MColor}{\eqref{Es1}}, we have, for $\lambda\geq 1$,
$$
\|\cR_0^\bba\transOp_t\phi_\lambda\|_{\bbp}\lesssim\|\transOp_t\phi_\lambda\|_{\bbp}=\|\phi_\lambda\|_{\bbp}=\lambda^{\bba\cdot \frac{d}{\bbp'}}\|\phi\|_\bbp
\lesssim (1+t\lambda^\alpha)^{\beta} 
\lambda^{\beta+\bba\cdot \frac{d}{\bbp'}}.
$$
Combining the above calculations, we obtain \textcolor{Dgreen}{
\begin{align}\label{Es3_Temp}
	\|\transOp_t\phi_\lambda\|_{\bB^\beta_{\bbp;\bba}}\lesssim_{C_T}
	\lambda^{(1+\alpha)\beta+\cA_{\bbb1,\bbp}}.
\end{align}}
Next we consider the general \textcolor{Dgreen}{case}
 $\ell>0$. \textcolor{Dgreen}{L}et $\psi\in C^\infty_c(\mR^{2d})$ such that \textcolor{Dgreen}{ for all $\lambda\ge 1$, $\psi\phi_\lambda=\phi_\lambda$
 	(since the supports of $\phi_\lambda$ shrink as $\lambda$ increases, $\psi$ can be simply taken equal to one on $ \text{supp}(\phi_1)$).}
  \textcolor{Dgreen}{B}y definition \textcolor{Dgreen}{of $\transOp_{t}$}, we clearly have
\begin{align*}
\|\omega_\ell\transOp_t\phi_\lambda\|_{\bB^\beta_{\bbp;\bba}}
=\|\omega_\ell\transOp_t(\textcolor{Dgreen}{\psi}\phi_\lambda)\|_{\bB^\beta_{\bbp;\bba}}
\textcolor{Dgreen}{=\|\big(\omega_\ell\transOp_t(\psi)\big)\transOp_t(\phi_\lambda)\|_{\bB^\beta_{\bbp;\bba}}}\le C 
\|\omega_\ell\Gamma_t\textcolor{Dgreen}{\psi}\|_{\textcolor{black}{\bB^{\beta,1}_{\boldsymbol{\infty},\bba}}} 
\|\transOp_t\phi_\lambda\|_{\bB^\beta_{\bbp;\bba}},
\end{align*}
\textcolor{black}{using Theorem 2 p. 177 in \cite{RunSic-97}, which readily extends to the current anisotropic setting, for the  control of the Besov norm of the product. 
}
Since $\psi$ has compact support, 
 it is easy to see that for any $T>0$, there is a bounded domain $D_T\textcolor{Dgreen}{\subset \mathbb R^{2d}}$ so that
$$
\mbox{supp}(\Gamma_t\textcolor{Dgreen}{\psi})\subset D_T,\ \ \forall t\in[0,T],\ \lambda\geq 1.
$$
Hence,
$$
\sup_{t\in[0,T]}\sup_{\lambda\geq 1}\|\omega_\ell\Gamma_t\textcolor{Dgreen}{\psi}\|_{\textcolor{Dgreen}{\bB^{\beta,1}_{\boldsymbol{\infty}};\bba}}
<\infty.
$$
The result now follows.
\end{proof}
\section{Convergence rate of empirical measure}\label{CONV_EMP_SAMP_MU0}
We first \textcolor{Dgreen}{state} the following elementary lemma.
\bl\label{LeA1}
Let $f_i, i=1,\cdots,N$, be a sequence of \textcolor{Dgreen}{Borel} functions on $\mR^{2d}$. Let $\bbp=(p_x,p_v)\in[1,\infty]^2$ and $q:=p_x\wedge p_v\wedge 2$. Then
$$
\left\|\sum_{i=1}^N|f_i|^2\right\|_{\bbp/2}\leq\left(\sum_{i=1}^N\|f_i\|^q_{\bbp}\right)^{2/q}.
$$
\el
\begin{proof}
\textcolor{Dgreen}{The estimate} follows by $(\sum_{i=1}^N|a_i|)^{\gamma}\leq\sum_{i=1}^N|a_i|^{\gamma}$ for $\gamma\in(0,1]$ and Minkowski's inequality.
\end{proof}

Now we show the following convergence rate estimate for the empirical measure of i.i.d random variables.\comm{JF 08/04: The modification are here due to the "typo" on the prefactor $m^m$ in the discrete BDG inequality on $\mL^\bbp$ for which no references could have been found ... The dependency of the prefactor on $m$ remains so implicit. Jf 09/04: Given these changes, the remark C.3 has been put in commented.} 
\bl\label{Le34} 
Let $(\xi_1,\cdots,\xi_N)$ be i.i.d random variables in $\mR^{2d}$ with common distribution $\mu$ \textcolor{Dgreen}{and l}et $\bar\mu_N:=\frac 1 N\sum_{i=1}^N\delta_{\textcolor{Dgreen}{\{}\xi_i\textcolor{Dgreen}{\}}}$ be the \textcolor{Dgreen}{related} empirical distribution measure.
Let $\textcolor{Dgreen}{\bbp_1}\in[1,\infty)^2$\comm{JF 25/03: Same correction as before: To not create confusion with the integrability index of $\mu_0$, $\bbp_0\rightarrow \bbp_1$.}  and $\textcolor{Dgreen}{\bbp_1}\leq\bbp\in(1,\infty)^2$.
If $\textcolor{Dgreen}{\bbp_1}=\bbp$, we let $\ell=0$; otherwise, we choose $\ell>\cA_{\textcolor{Dgreen}{\bbp_1},\bbp}$.
For any continuous $\phi:\mR^{2d}\to\mR$ \textcolor{Dgreen}{and for all {\black $m> 1$} for which $\mathbb E[|\xi_1|^{\ell m}]<\infty$},
there is a constant $C=C(d,\textcolor{Dgreen}{\bbp_1},\bbp,\textcolor{Dgreen}{m},\ell)>\textcolor{Dgreen}{1}$ such that \textcolor{Dgreen}{for all $N\ge 1$},
\begin{equation}\label{INIT}
\|\phi*(\bar\mu_N-\mu)\|_{L^m(\Omega;\mL^{\textcolor{Dgreen}{\bbp_1}})}\le C  
\Big(1+(\mE\textcolor{Dgreen}{\big[}|\xi_1|^{\ell m}_{\ZXC \bba}\textcolor{Dgreen}{\big]})^{1/m}\Big)N^{\frac1q-1}\|\omega_\ell\phi\|_\bbp,
\end{equation}
where  $q=p_x\wedge p_v\wedge 2$ \textcolor{Dgreen}{and where $\omega_\ell$ is the weight function defined in \eqref{Om1-bis}}.
\el
\begin{proof} 
For each $i=1,\cdots,N$, we define a family of \textcolor{Dgreen}{centered} i.i.d. random fields on $\mR^{2d}$ with zero mean by
$$
Y_i(z):=\phi*(\delta_{\textcolor{Dgreen}{\{}\xi_i\textcolor{Dgreen}{\}}}-\mu)(\textcolor{Dgreen}z),\ \ z\in\mR^{2d}.
$$
Let $\omega_\ell$ be defined \textcolor{Dgreen}{as} in \textcolor{Dgreen}{\eqref{Om1-bis}} with $\ell$ as in the \textcolor{Dgreen}{lemma} \textcolor{Dgreen}{and l}et $\bbp_2\in[1,\infty]^2$ be defined by $\frac1{\textcolor{Dgreen}{\bbp_1}}=\frac1{\bbp}+\frac1{\bbp_2}$. By H\"older's inequality \textcolor{Dgreen}{and applying \eqref{WeightIntegrability} with $\ell>\bba\cdot\frac d{\bbp_2}=\cA_{\bbp_1,\bbp}$}, we have\comm{JF 25/03: A recall to why $\|\omega^{-1}_\ell\|_{\bbp_2}<\infty$ would have been appreciated, I guess. Update 07/04: Done.}
$$
\Big\|\sum_{i=1}^NY_i\Big\|_{\textcolor{Dgreen}{\bbp_1}}\leq\Big\|\sum_{i=1}^N\omega_\ell Y_i\Big\|_{\bbp}\|\omega^{-1}_\ell\|_{\bbp_2}\lesssim \Big\|\sum_{i=1}^N\omega_\ell Y_i\Big\|_{\bbp}.
$$
Since $\mL^\bbp$ is a UMD space for $\bbp\in(1,\infty)^2$ (see Corollary \ref{cor:UMD} \textcolor{Dgreen}{in the Appendix \ref{sec:Type} below}), by \textcolor{Dgreen}{the functional} BDG's inequality\textcolor{Dgreen}{ (see \cite{VerYar-19} and references therein):
	$$
		\mathbb E\Big[\| M_N\|^m_\bbp\big]\le C_{m,\bbp} \mathbb E\Big[\|[ 
        M]^{1/2}_N\|^m_\bbp\Big],
	$$ 
	for discrete-time $\mL^\bbp$-valued martingale $M$} {\black : $k\in\{1,...,N\} \to M_k$}, we have \comm{JF 07/04: NOT SO FAST: On one hand, it seems that you have considered that the prefactor in the BDG  inequality as if $\mL^\bbp$ is $\mathbb R$ which is naturally untrue; On the second hand, in the case we have $\mathbb R$-valued martingale, the factor is of order $m^{m/2}$, not $m^m$.}
\begin{align*}
\mE\Big\|\sum_{i=1}^N \omega_\ell Y_i\Big\|_{\bbp}^m
&\textcolor{Dgreen}{\le (C_{m,\bbp})^m}
\mE\Big\| \Big(\sum_{i=1}^{N} |\omega_\ell Y_i|^2\Big)^{1/2}\Big\|^{m}_\bbp
=\textcolor{Dgreen}{(C_{m,\bbp})^m}
\mE\left\|\sum_{i=1}^{N} |\omega_\ell Y_i|^2
\right\|^{m/2}_{\bbp/2}.
\end{align*}
By Lemma \ref{LeA1}, we get for $q:={p_x\wedge p_v\wedge 2}$,
$$
\mE\Big\|\sum_{i=1}^N \omega_\ell Y_i\Big\|_{\bbp}^m
\lesssim \textcolor{Dgreen}{(C_{m,\bbp})^m}\mE\left(\sum_{i=1}^{N}\|\omega_\ell Y_i\|^q_\bbp\right)^{m/q}.
$$
Noting that\textcolor{Dgreen}{, by the growth property \eqref{WeightGrowth} of $\omega_\ell$, and for $\mu(\omega_\ell):=\int \omega_\ell(z)\mu(dz)$,}\comm{\textcolor{MColor}{To be checked. I think the control for $\|\omega_\ell Y_i\|_\bbp$ is Ok but follows from Young and manipulations therein of the type described here. JF 26/03: Checked.}}
\begin{align*}
|\omega_\ell Y_i|(z)&\textcolor{Dgreen}{\le |\omega_\ell(z)\phi*(\delta_{\{\xi_i\}}+\mu)(z)|}\\
&\lesssim|(\omega_\ell\phi)*(\delta_{\{\xi_i\}}\textcolor{Dgreen}{+}\mu)|(z)
+|\phi*(\delta_{\textcolor{Dgreen}{\{}\xi_i\textcolor{Dgreen}{\}}}\textcolor{Dgreen}{+}\mu)|(z)(\omega_\ell(\xi_i)+\mu(\omega_\ell)).
\end{align*}
\textcolor{Dgreen}{B}y Young's inequality and \textcolor{Dgreen}{as} $1\leq\omega_\ell$, \textcolor{Dgreen}{it follows that}
$$
 \|\omega_\ell Y_i\|_\bbp
 \lesssim\|\omega_\ell\phi\|_\bbp
 +\|\phi\|_\bbp(\omega_\ell(\xi_i)\textcolor{Dgreen}{+\omega_\ell(\mu)}+1)\leq \|\omega_\ell\phi\|_\bbp(\omega_\ell(\xi_i)\textcolor{Dgreen}{+\omega_\ell(\mu)}+1).
$$
Combining the above calculations, we get
\begin{align*}
\mE\textcolor{Dgreen}{\big[}\|\phi*(\bar\mu_N-\mu)\|_{\textcolor{Dgreen}{\bbp_1}}^m\textcolor{Dgreen}{\big]}
=\frac1{N^{m}}\mE\Big\|\sum_{i=1}^NY_i\Big\|_{\textcolor{Dgreen}{\bbp_1}}^m
\lesssim
\frac{\textcolor{Dgreen}{(C_{m,\bbp})^m}}{N^{m}} N^{\frac mq}\|\omega_\ell\phi\|_\bbp\mE\textcolor{Dgreen}{\Big[}\big(1+\omega_\ell(\xi_1)\big)^m\textcolor{Dgreen}{\Big]}\textcolor{Dgreen}{.}
\end{align*}
The proof is complete.
\end{proof}
\textcolor{Dgreen}{Now, recalling the dilatation operator $\mathcal D_\lambda$ from Appendix \ref{APP_BESOV} and the notation:
$$
\phi_\lambda(z)=\mathcal D_\lambda\phi(x,v)=\lambda^{(2+\alpha)d}\phi(\lambda^{1+\alpha}x,\lambda v), \ \lambda>0,
$$
w}e have the following useful result.
\bt\label{THM_CI}
Let $(\xi_1,\cdots,\xi_N)$ be i.i.d random variables in $\mR^{2d}$ with common distribution $\mu$.
Let $\bar\mu_N:=\frac 1 N\sum_{i=1}^N\delta_{\textcolor{Dgreen}{\{}\xi_i\textcolor{Dgreen}{\}}}$ be the empirical distribution measure.
Let $\bbp_{\textcolor{Dgreen}{\bbb1}}\in[1,\infty)^2$  and $\bbp_{\textcolor{Dgreen}{\bbb1}}\leq\bbp\in(1,\infty)^2$.
If $\bbp_{\textcolor{Dgreen}{\bbb1}}=\bbp$, we let $\ell=0$; otherwise, we choose $\ell>\cA_{\bbp_{\textcolor{Dgreen}{\bbb1}},\bbp}$. \textcolor{Dgreen}{Then, given 
	$$
	q=p_x\wedge p_v\wedge 2,
	$$
	for any $\beta<0$ and any integer $m{\color{black}>1}$ satisfying
	$$
	\mu\big(|\cdot|^{\ell m}\big):=\mathbb E\big[|\xi|^{\ell m}_\bba\big]<\infty,
	$$
	there exists a constant $C>0$ such that, for all $N,\lambda\ge 1$,
	\begin{equation}\label{Con1}
	\|\phi_\lambda*(\bar\mu_N-\mu)\|_{L^m(\Omega;\bB^{\beta,1}_{\bbp_{\textcolor{Dgreen}{\bbb1}};\bba})}\le C  \Big(1+\mE\big[|\xi_1|_{\bba}^{\ell m}\big]\Big)^{1/m}N^{\frac1q-1}\lambda^{\cA_{\bbb1,\bbp}}.
	\end{equation}
}
\et
\begin{proof}
By Minkowski's inequality, we have
\begin{align*}
\|\phi_\lambda*(\bar\mu_N-\mu)\|_{L^m(\Omega;\bB^{\beta,1}_{\bbp_{\textcolor{Dgreen}{\bbb1}};\bba})}
&=  \Big\|\sum_{j\ge0} 2^{\beta j}\|\cR^\bba_j(\phi_\lambda*(\bar\mu_N-\mu))\|_{\bbp_{\textcolor{Dgreen}{\bbb1}}}\Big\|_{\textcolor{Dgreen}{L}^m(\Omega)}\\
&\leq \sum_{j\ge0} 2^{\beta j}  \|\cR^\bba_j(\phi_\lambda*(\bar\mu_N-\mu))\|_{L^m(\Omega;\mL^{\bbp_{\textcolor{Dgreen}{\bbb1}}})}.
\end{align*}
Noting that
$$
\cR^\bba_j(\phi_\lambda*(\bar\mu_N-\mu))=(\cR^\bba_j\phi_\lambda)*(\bar\mu_N-\mu)=(\check\phi^\bba_j*\phi_\lambda)*(\bar\mu_N-\mu),
$$
by \eqref{INIT}, we have\textcolor{Dgreen}{, for $\omega_\ell$ defined as in \eqref{Om1-bis},} 
\begin{align*}
\|\cR^\bba_j(\phi_\lambda*(\bar\mu_N-\mu))\|_{L^m(\Omega;\mL^{\bbp_{\textcolor{Dgreen}{\bbb1}}})}
&\le C  
\Big(1+(\mE\textcolor{Dgreen}{\big[}|\xi_1|_\bba^{\textcolor{Dgreen}{\ell m}}\textcolor{Dgreen}{\big]})^{1/m}\Big)N^{\frac1q-1}\|\omega_\ell\cR^\bba_j\phi_\lambda\|_{\bbp}.
\end{align*}
	{\color{black}M}ultiplying the above by $2^{\beta j}$ (recall that $\beta<0$) and summing the resulting expression over $j\ge 0$, we get
\begin{align*}
\|\phi_\lambda*(\bar\mu_N-\mu)\|_{L^m(\Omega;\bB^{\beta,1}_{\bbp_{\textcolor{Dgreen}{\bbb1}};\bba})}&\lesssim \Big(1+(\mE\textcolor{Dgreen}{\big[}|\xi_1|_\bba^{\textcolor{Dgreen}{\ell m}}\textcolor{Dgreen}{\big]})^{1/m}\Big)N^{\frac1q-1}\sum_{j\ge 0}2^{\beta j}\| (\cR^\bba_j\phi_\lambda)\omega_\ell\|_{\bbp}\\
&\lesssim \Big(1+(\mE\big[|\xi_1|_\bba^{\ell m}\big])^{1/m}\Big)N^{\frac1q-1}\sup_{j\ge 0}\| (\cR^\bba_j\phi_\lambda)\omega_\ell\|_{\bbp}
\end{align*}
Applying \eqref{WeightedBlock} in Lemma \ref{lem:WeightedNorm}, we eventually get
\begin{equation*}
\sup_{j\ge 0}\| (\cR^\bba_j\phi_\lambda)\omega_\ell\|_{\bbp}\le \|\omega_\ell\phi_\lambda\|_{\bbp}
 =\lambda^{\mathcal A_{\bbb1,\bbp}},
\end{equation*}
which ends the proof.
\end{proof}
\section{Characterization of $\mL^{\bbp}$ as a UMD space and as a Banach space of martingale type.}\label{sec:Type}

Let us first recall the definition of UMD spaces.  

\bd[\cite{Pisier-16}, Chapter 5] 
{\black
A Banach space $(E,\|\cdot\|_E)$ is said to be a \textit{unconditional martingale difference}  ($\text{UMD}$ in short) space if there exists $p\in (1,\infty)$ and $c=c(E,p)>0$ such that for every $\{-1,1\}$-valued sequence $\{\epsilon_n\}_n$ and for every martingale $\{M_n\}_{n\ge 1}$ defined \textcolor{Dgreen}{o}n some filtered probability space $(\Omega,\mathcal F,\{\mathcal F_n\}_{n\ge 0},\mathbb P)$ and converging as $n\rightarrow \infty$ in $L^{p}(\Omega;E)$, we have
		\begin{equation}\label{UMDProp}
			\sup_n\Vert\sum_{j=0}^n\epsilon_j d M_j\Vert_{L^{p}(\Omega;E)}\le c\sup_n\Vert M_n\Vert_{L^{p}(\Omega;E)},
		\end{equation}
		for $dM_j:=M_j-M_{j-1}$, $dM_0=0$.	}	
		\ed

		Simple instances of UMD spaces are the Euclidean space, Hilbert spaces and $L^{p}$\textcolor{Dgreen}{-spaces} for $p\in(1,\infty)$. Further, for any UMD space $E$, and any measure space $(A,\mathcal A,\nu)$, every $L^p(A;E)$, with $p\in(1,\infty)$ is a UMD space, \cite{Pisier-16}, Corollary 5.22. As such\textcolor{Dgreen}{, for any $p_x\in(1,\infty)$, $L^{p_x}(\mathbb R^d)$ is a UMD space, and by iteration, }\comm{JF 09/04: Since, we bother to give a few lines of explanations for the $M$-type property, we can afford an additional half-sentence of explanation here.}we can immediately \textcolor{black}{derive} \textcolor{Dgreen}{the corollary:}  
		\bc\label{cor:UMD}
		For all $\bbp\in (1,\infty)^2$, $\mL^\bbp$ is a $\text{UMD}$-space.
		\ec
		 
		For the characterization of $\mL^{\bbp}$ as a Banach space of $M$-type or martingale type $p$ - \textcolor{Dgreen}{characterization} which is used to apply the martingale inequality in Theorem  \ref{thm:Hausenblas-BDG} for the proof of Theorem \ref{thm:MartPart-Stable} -, and for the sake of completeness, let us briefly recall the definition of these spaces\textcolor{Dgreen}{.}\comm{JF 09/04: Following the "discovery" of the Section 3.5 in \cite{HNVW-16}, what follows as been hugely simplified, the proposition 3.5.30 gives us a way to largely simplify the proof of $\mL^\bbp$ being of $M$-type. Given that the proof is three lines (with details) the result has been relabeled as a corollary instead of a lemma (the relabel in the main text has been also made).} 
	\bd\label{def:MartingaleType}  A Banach space $(E,\textcolor{Dgreen}{\|}\cdot\textcolor{Dgreen}{\|}_E)$ is said to be of martingale type $p$, for $p\in[1,\infty]$, if  there exists a constant $C=C(E,p)$ such that for all finite $E$-valued martingale $\{M_k\}_{k=1}^n$, it holds that, for $dM_k=M_k-M_{k-1}$ (with the convention $M_0=0$)
	\begin{equation}\label{HaussenblasType}
		\sup_{1\le k\le n}\textcolor{Dgreen}{\|}  M_k\textcolor{Dgreen}{\|}_{L^p(\Omega;E)}\le C  \Big(\sum_{k=1}^{n}\mathbb E[\textcolor{Dgreen}{\|}dM_{k}\textcolor{Dgreen}{\|}_E^{p}] \Big)^{1/p}.
	\end{equation}	
	\ed 
	Characteristically, every Banach space is of martingale type $1$, Hilbert spaces are of  martingale type $2$ \textcolor{Dgreen}{and we may refer to \cite{Pisier-16} for a detailed account of the geometric properties related to \eqref{HaussenblasType} and to \cite{HNVW-16} for related martingale transforms.}
	
	\textcolor{Dgreen}{	
		 \textcolor{black}{Similarly to the UMD property}, the martingale property \eqref{HaussenblasType} \textcolor{black}{holds} for Lebesgue spaces: for $E$ a Banach space of martingale type $p\in[1,2]$ and $(A,\mathcal A,\nu)$ a measure space, any $L^r(A;E)$ space with $r\in(1,\infty)$ is of martingale type $r\wedge p$ (see  \cite{HNVW-16}, Proposition 3.5.30
). 
\textcolor{black}{We have the following result:}
		\bc\label{MixedBanachType}
			For all $\bbp\in (1,\infty)^2$, $\mL^\bbp$ is a Banach space of  martingale type $p$ for all $1\le p \le p_x\wedge p_v\wedge 2$.
		\ec
		\begin{proof}
		Given $\bbp=(p_x,p_v)$, since $\mathbb R^{2d}$ is of martingale type $2$, $L^{p_x}(\mathbb R^d)$ is of martingale type $p_x\wedge 2$. As such, the iterated space $L^{p_v}(\mathbb R^{d};L^{p_x}(\mathbb R^{d}))$ is of martingale type $p_v\wedge (p_x\wedge 2)$. Since once \eqref{HaussenblasType} is satisfied for some $p$, the property also holds true for all integrability index in $[1,p]$ (see Corollary 3.5.28, \cite{HNVW-16}). This gives the claim. 
		\end{proof}
	}

\section{Gronwal\textcolor{Dgreen}{l} inequality of Volterra type}\label{sec:AppD}

In this appendix, we  show a Gronwall inequality of Volterra type that is crucial for  the proof of \textcolor{Dgreen}{Theorem \ref{main01}}.
First of all, we give the following elementary estimate.\comm{JF 16/03: I would suggest to relabel the following lemma into a corollary, in order to keep the subsequent lemmas more valuable.}
\bl\label{lem:TimeSingIntegral}
Let $T>0$ and $a_i,b_i\in(-1,\infty)$ with $a_i+b_i>-1$, $i=1,2$. Then there is a constant $C=C(T,a_1,b_1,a_2,b_2)>0$ such that for all $t\in[0,T]$ and \textcolor{Dgreen}{all Borel} function $f:[0,T]\to \mR_+$,
\begin{align}\label{0215:00}
\int_0^t (t-s)^{a_1}s^{b_1} \left(\int_0^s (s-r)^{a_2}r^{b_2}f(r) d  r\right) d  s\le C \sum_{i=1}^2\int_0^t (t-s)^{a_i}s^{b_i}f(s) d  s.
\end{align}
In particular, if $b_1\ge b_2$, then
\begin{align}\label{0215:01}
\int_0^t (t-s)^{a_1}s^{b_1} \left(\int_0^s (s-r)^{a_2}r^{b_2}f(r) d  r\right) d  s\le C \int_0^t (t-s)^{a_2}s^{b_2}f(s) d  s.
\end{align}
\el
\begin{proof}
By Fubini\textcolor{Dgreen}{-Tonelli}'s theorem we have
$$
\int_0^t (t-s)^{a_1}s^{b_1} \left(\int_0^s (s-r)^{a_2}r^{b_2}f(r) d  r\right) d  s=\int_0^tK(t,r) r^{b_2} f(r) d  r,
$$
where
$$
K(t,r):=\int_r^t (t-s)^{a_1}s^{b_1}(s-r)^{a_2} d  s.
$$
{\bf (Case $b_1\ge0$).} Since $a_1+1>0$, we have
\begin{align}\label{SQ1}
K(t,r)\leq T^{b_1} \int_r^t (t-s)^{a_1}(s-r)^{a_2} d  s\lesssim (t-r)^{a_1+a_2+1}\leq T^{a_1+1} (t-r)^{a_2}.
\end{align}
{\bf (Case $b_2\le b_1<0$).}  In view of $a_1+b_1>-1$, we have
\begin{align}\label{SQ2}
K(t,r)\leq \int_r^t (t-s)^{a_1}(s-r)^{a_2+b_1} d  s\lesssim (t-r)^{a_1+b_1+a_2+1}\leq T^{a_1+b_1+1} (t-r)^{a_2}.
\end{align}
{\bf (Case $b_1<b_2\le0$).} In this case, \textcolor{Dgreen}{for all $s\in[r,t]$, $s^{b_1}=s^{b_1-b_2}s^{b_2}\le (s-r)^{b_1-b_2}r^{b_2}$,
	and} we have
\begin{align*}
K(t,r)\leq r^{b_1-b_2}\int_r^t (t-s)^{a_1}(s-r)^{a_2+b_2} d  s\lesssim (t-r)^{a_1+a_2+b_2+1}r^{b_1-b_2}\leq T^{a_2+b_2+1} (t-r)^{a_1}r^{b_1-b_2}.
\end{align*}
{\bf (Case $b_1<0<b_2$).}  In this case, we have
\begin{align*}
K(t,r)\leq T^{b_2} r^{b_1-b_2}\int_r^t (t-s)^{a_1}(s-r)^{a_2} d  s\lesssim (t-r)^{a_1+a_2+1}r^{b_1-b_2}\leq T^{a_2+1} (t-r)^{a_1}r^{b_1-b_2}.
\end{align*}
Combining the above calculations, we obtain \eqref{0215:00}. \eqref{0215:01} is from \eqref{SQ1} and \eqref{SQ2}.
\end{proof}

Now we can show the following Gronwall inequality.
\bl[Gronwall's inequality of Volterra type]\label{0214:lem00}
Let $T>0$, $n\in\mN$ and $a_i,b_i\in (-1,\infty)$ with  $a_i+b_i>-1$ for all $i=1,..,n$. Assume that $f,g:[0,T]\to\mR_+$ are two \textcolor{Dgreen}{Borel} measurable functions and satisfy that for some $c_0>0$ and
almost all $t\in(0,T]$,
\begin{align}\label{0215:03}
    f(t)\le g(t)+c_0\sum_{i=1}^n\int_0^t (t-s)^{a_i} s^{b_i} f(s) d  s.
\end{align}
Then there is a constant $C=C(T,n,(a_i,b_i)_{i=1}^n,c_0)>0$\comm{JF 16/03: Although not essential, it may be more precise to say that $C>1$.} such that for almost all $t\in(0,T]$,
\begin{align*}
    f(t)\le Cg(t)+C\sum_{i=1}^n\int_0^t (t-s)^{a_i} s^{b_i} g(s) d  s.
\end{align*}
\el
\begin{proof}
\textcolor{Dgreen}{The case $n=1$ yielding
\begin{align}\label{0215:03-Temp}
	f(t)\le g(t)+c_0\int_0^t (t-s)^{a_1} s^{b_1} f(s) d  s \, \Rightarrow  f(t)\le Cg(t)+C\int_0^t (t-s)^{a_1} s^{b_1} g(s) d  s
\end{align}
has been established in \cite[Lemma A.4]{Ha23}.}
 Now, let $n\textcolor{Dgreen}{>1}$ and $\{a_i\textcolor{Dgreen}{\}_{i=1}^n},\textcolor{Dgreen}{\{}b_i\}_{i=1}^n\subset(-1,\infty)$ 
satisfying $a_i+b_i>-1$. Without loss of generality, 
we assume that $b_1\ge b_2\ge\cdot\cdot\ge b_n$. 
Let 
$$
F_1(t):=g(t)+\textcolor{Dgreen}{c_0}\sum_{k=2}^{n}\int_0^t (t-s)^{a_k}s^{b_k}f(s) d  s,
$$
and, for any $i=2,..,n-1$,  
\begin{align*}
F_i(t):=g(t)+\textcolor{Dgreen}{c_0}\sum_{k=1}^{i-1}\int_0^t (t-s)^{a_k}s^{b_k}g(s) d  s+\textcolor{Dgreen}{c_0}\sum_{k=i+1}^{n}\int_0^t (t-s)^{a_k}s^{b_k}f(s) d  s.
\end{align*}
Then \eqref{0215:03} reads \textcolor{Dgreen}{as}
$$
f(t)\textcolor{Dgreen}{\le} F_1(t)+\textcolor{Dgreen}{c_0}\int_0^t (t-s)^{a_1}s^{b_1}f(s) d  s.
$$
Thus \textcolor{Dgreen}{applying \eqref{0215:03-Temp} }
 and \textcolor{Dgreen}{next} \eqref{0215:01} \textcolor{Dgreen}{(with the ordering $b_1\ge b_k$ for $k\ge 2$)}, we have
\begin{align*}
f(t)&\lesssim_{\textcolor{Dgreen}{C}} F_1(t)+\int_0^t (t-s)^{a_1}s^{b_1}F_1(s) d  s\\
&\textcolor{Dgreen}{= F_1(t)+ \int_0^t (t-s)^{a_1}s^{b_1}g(s)\, d  s+c_0\sum_{k=2}^n\int_0^t(t-s)^{a_1}s^{b_1}\Big(\int_0^s(s-r)^{a_k}r^{b_k} f(r)\, d  r\Big)\, d  s}\\
&\textcolor{Dgreen}{\stackrel{\eqref{0215:01}}{\lesssim} F_1(t)+ \int_0^t (t-s)^{a_1}s^{b_1}g(s)\, d  s+c_0\sum_{k=2}^n\int_0^t(t-s)^{a_k}s^{b_k} f(r)\, d  r}\\
&\textcolor{Dgreen}{=} F_1(t)+\int_0^t (t-s)^{a_1}s^{b_1}g(s) d  s\textcolor{Dgreen}{+c_0\sum_{k=3}^n\int_0^t(t-s)^{a_k}s^{b_k} f(r)\, d  r+c_0\int_0^t(t-s)^{a_2}s^{b_2} f(r)\, d  r}\\
&\textcolor{Dgreen}{\lesssim} F_2(t)+\int_0^t (t-s)^{a_2}s^{b_2}f(s) d  s.
\end{align*}
\textcolor{Dgreen}{Iterating the preceding estimate, successively using  \eqref{0215:03-Temp} and \eqref{0215:01},} we obtain
\begin{align*}
f(t)\lesssim F_{n-1}(t)+\int_0^t (t-s)^{a_n}s^{b_n}f(s) d  s,
\end{align*}
which in turn\textcolor{Dgreen}{, applying a last time \eqref{0215:03-Temp},} implies the desired estimate. 
\end{proof}

Now we apply the previous lemma to the function given in \eqref{AZ12}: 
\begin{align*}
	G_\beta(t,s)=t^{\frac{\beta-\beta_0}{\alpha}
	}(t-s)^{-\frac1{\alpha}}+t^{\frac{\beta+\Gap}{\alpha}
	}(t-s)^{-\frac{\Gap+\beta_0+1}{\alpha}}, \ \textcolor{Dgreen}{0\le s < t<\infty}, 
\end{align*}
where $\alpha\in(1,2]$, $\beta\geq 0$ and $\beta_0,\Gap$ \textcolor{Dgreen}{are} as in {\bf (H)}.
\bl\label{0214:lem02}
Let $T>0$, $\beta\in\textcolor{Dgreen}{\big[0,}(\beta_0+\alpha)\wedge (\alpha-\Gap)\textcolor{Dgreen}{\big)}$ 
and $\gamma\in [0,\alpha\wedge(\alpha-1+\beta-\beta_0))$. \textcolor{Dgreen}{Then the following properties hold.}
\begin{enumerate}[(i)]
\item There is a $p\geq 1$ large enough and $C_T>0$ so that for all $f\in L^p([0,t])$,
\begin{align}\label{E4}
\int_0^t G_\beta(t,s)s^{-\frac\gamma\alpha}f(s) d  s\le C_T\|f\|_{L^p([0,t])}.
\end{align}
\item There is a constant $C_T>0$ such that for all $t\in[0,T]$ and  measurable $f:[0,T]\to\mR_+$,
\begin{align}\label{0214:00}
\int_0^t G_\beta(t,s)s^{-\frac\gamma\alpha}\left(\int_0^s G_\beta(s,r)f(r) d  r\right) d  s\le C_T \int_0^t G_\beta(t,s)f(s) d  s.
\end{align}
\item Assume that $f,g,h:[0,T]\to\mR_+$ are three Borel {\color{black}non-negative} 
measurable functions and \textcolor{Dgreen}{satisfying, }
for some constant $c_0>0$ and  almost all $t\in(0,T]$,
\begin{align}\label{0214:03}
    f(t)\le g(t)+c_0\int_0^t G_\beta(t,s)s^{-\frac{\gamma}{\alpha}}f(s) d  s+\int_0^t G_\beta(t,s)h(s) d  s.
\end{align} 
Then there is a constant $C=C(T,\alpha,\beta,\beta_0,\Gap,c_0)>0$ such that for almost all $t\in(0,T]$,
{\black\begin{align*}
    f(t)\le Cg(t)+C\int_0^t G_\beta(t,s)(s^{-\frac{\gamma}{\alpha}}g(s)+h(s)) d  s.
\end{align*}}
\end{enumerate}
\el
\begin{proof}
(i) \textcolor{Dgreen}{As the exponents $\frac {-1}{\alpha}$, $\frac {-\gamma}{\alpha}$ and $-\frac {\Gap+\beta_0+1}{\alpha}$ are all strictly above $-1$, one can take $p'\in(1,\infty)$ close enough to $1$ so that 
	 $\|G_\beta(t,\cdot)\cdot^{-\tfrac{\gamma}{\alpha}}\|_{L^{p'}((0,T))}<\infty$. The statement then  follows by H\"older's inequality.}

\noindent
(ii) 
Since $\beta-\beta_0, \beta+\Lambda\in[0,\alpha)$, we have \textcolor{Dgreen}{$t^{\ell}-s^{\ell}\le (t-s)^{\ell}$ for $\ell\in\{(\beta-\beta_0)/\alpha,(\beta+\Gap)/\alpha\}$ and so}
\begin{align}
G_\beta(t,s)&\le \Big(s^{{\frac{\beta-\beta_0}{\alpha}}}+(t-s)^{{\frac{\beta-\beta_0}{\alpha}}}\Big)(t-s)^{-\frac1\alpha}  +\Big(s^{\frac{\beta+\Lambda}{\alpha}}+(t-s)^{\frac{\beta+\Lambda}{\alpha}}\Big) (t-s)^{-\frac{\Lambda+\beta_0+1}\alpha} \no\\
&\quad=:\sum_{i=1}^{\textcolor{Dgreen}{4}} (t-s)^{a_i}s^{b_i}\textcolor{Dgreen}{\stackrel{s^\ell\le t^{\ell} }{\le} G_\beta(t,s)+\sum_{i\in\{2,4\}}(t-s)^{a_i}\le G_\beta(t,s)+2t^{\tfrac{\beta-\beta_0}{\alpha}}(t-s)^{a_2-\tfrac{\beta-\beta_0}{\alpha}}}\nonumber\\
&\le 2G_\beta(t,s),\label{0214:01}
\end{align}
\textcolor{Dgreen}{taking}
$$
a_1=-\tfrac1\alpha, b_1=\tfrac{\beta-\beta_0}{\alpha}, \quad a_2=\tfrac{\beta-\beta_0-1}{\alpha}, b_2=0, \quad a_3=-\tfrac{\Lambda+\beta_0+1}\alpha, b_3=\tfrac{\beta+\Lambda}{\alpha},\quad\textcolor{Dgreen}{a_4=\tfrac{\beta-\beta_0-1}{\alpha},b_4=0.}
$$
\textcolor{Dgreen}{Note that}
\begin{align*}
\textcolor{Dgreen}{\alpha\big(}a_i+b_i\textcolor{Dgreen}{\big)}-\gamma=-1+\beta-\beta_0-\gamma>-\alpha,\quad i=1,2,3,4.
\end{align*}
Then \eqref{0214:00} is directly from \eqref{0215:00}.
Indeed, \eqref{0215:00} and \eqref{0214:01} imply that
\begin{align*}
\int_0^t G_\beta(t,s)s^{-\frac\gamma\alpha}\left(\int_0^s G_\beta(s,r)f(r) d  r\right) d  s&=\int_0^t G_\beta(t,s)s^{-\frac\gamma\alpha}\left(\int_0^s G_\beta(s,r)\textcolor{Dgreen}{r}^{-\frac\gamma\alpha}[\textcolor{Dgreen}{r}^{\frac\gamma\alpha}f(r)] d  r\right) d  s\\
&\textcolor{Dgreen}{\le\int_0^t\Big(\sum_{i=1}^4 (t-s)^{a_i}s^{b_i-\tfrac{\gamma}{\alpha}}\Big)\Big(\sum_{j=1}^4\int_0^s (s-r)^{a_j}r^{b_j-\tfrac{\gamma}{\alpha}} [r^{\tfrac{\gamma}{\alpha}}f(r)]\, d  r\Big)\, d  s}\\
&\textcolor{Dgreen}{\stackrel{\eqref{0215:00}}{\lesssim}2\sum_{i=1}^4\int_0^t(t-r)^{a_i}r^{b_i-\tfrac{\gamma}{\alpha}}[r^{\tfrac{\gamma}{\alpha}}f(r)]\, d  r}\\
&\lesssim \int_0^t G_\beta(t,r)r^{-\frac\gamma\alpha}[r^{\frac\gamma\alpha}f(r)] d  r=\int_0^t G_\beta(t,r)f(r) d  r.
\end{align*}
(iii) \textcolor{Dgreen}{Starting from \eqref{0214:03}, by  Lemma \ref{0214:lem00} (with the source term $g(t)+\int_0^t G_\beta(t,s)h(s) d  s$) and $(ii)$, }
we have
{\begin{align*}
f(t)&\lesssim g(t)+\int_0^t G_\beta(t,s)s^{-\frac{\gamma}{\alpha}}\left(g(s)+\int_0^s G_\beta(s,r)h(r) d  r\right) d  s\\
&\lesssim g(t)+\int_0^t G_\beta(t,s)\left(s^{-\frac{\gamma}{\alpha}}g(s)+h(s)\right) d  s,
\end{align*}
\black where the last inequality is from \eqref{0214:00}.}
This completes the proof.

\end{proof}

\end{appendices}

\end{document}

\end{document}

 Similarly, the following results hold.
\bl\label{RJP00}
For any $\bbp\in[1,\infty]^2$ and $\beta>0$, there is a constant $C>0$ such that for all $t\ge0$, $j\ge-1$ and $N\in\mN$
\begin{align}\label{08:20}
\|\cR_j^a\phi_N\|_{\bbp}\le CN^{{\color{MColor}\modulateorder}((2+{\color{MColor}\rate})d-\bba\cdot \frac{d}{\bbp})}\hbar^{\beta}.
\end{align}
where $\hbar:=2^{-j}N^{{\color{MColor}\modulateorder}}$.

In particular, for any $\beta\ge0$
\begin{align*}
\|\phi_N\|_{\bB^\beta_{\bbp;\bba}}\le C N^{{\color{MColor}\modulateorder}(\beta+(2+\alpha)d-\bba\cdot \frac{d}{\bbp})}.
\end{align*}
 
\el

$$
\nabla_v(\transOp_t\phi_N)=\transOp_t(\nabla_v\phi_N-t\nabla_x\phi_N),
$$
and
$$
\nabla_x(\transOp_t\phi_N)=\transOp_t\nabla_x\phi_N,\ \ \p_t\transOp_t\phi_N=-v\cdot\transOp_t(\nabla_x\phi_N)
$$
and
$$
\Delta_v(\transOp_t\phi_N)=\transOp_t(\Delta_v\phi_N-2t(\nabla_v\cdot\nabla_x)\phi_N+t^2\Delta_x\phi_N),
$$
Moreover,

\br
In case 2, in order to obtain the propagation of chaos, we need 
\begin{align*}
{\color{MColor}\modulateorder}(\beta_0+4d-\bba\cdot\frac{d}{\bbp_0\vee2}),{\color{MColor}\modulateorder}(-\beta_b+4d-\bba\cdot\frac{d}{\bbp_b'\vee2})<\frac12
\end{align*}
and
\begin{align}\label{08:22}
-\beta_b+\bba\cdot\frac{d}{\bbp_b}<1+\theta.
\end{align}
We note that \eqref{08:22} allow us to consider some supercriticle cases., i.e.
\begin{align*}
-\beta_b+\bba\cdot\frac{d}{\bbp_b}>1.
\end{align*}

\er